\theoremstyle{plain}
\newtheorem{thm}{Theorem}[section]
\newtheorem{cor}[thm]{Corollary}
\newtheorem{lem}[thm]{Lemma}
\newtheorem{prop}[thm]{Proposition}
\theoremstyle{definition}
\newtheorem{defi}[thm]{Definition}
\theoremstyle{remark}
\newtheorem{rem}[thm]{Remark}
\numberwithin{equation}{section}
\newcommand{\average}{{\mathchoice {\kern1ex\vcenter{\hrule height.4pt
width 6pt depth0pt} \kern-9.7pt} {\kern1ex\vcenter{\hrule
height.4pt width 4.3pt depth0pt} \kern-7pt} {} {} }}
\newcommand{\ave}{\average\int}
\def\R{\mathbb{R}}
\def\N{\mathbb{N}}
\newcommand{\Per}{\mathrm{Per}}
\begin{document}

\title[Nonlocal problems with Neumann boundary conditions]{Nonlocal problems \\ with Neumann boundary conditions}

\author{Serena Dipierro}
\address{Maxwell Institute for Mathematical Sciences and School of Mathematics,
University of Edinburgh, James Clerk Maxwell Building, King's Buildings, Edinburgh EH9 3JZ,
United Kingdom}
\email{serena.dipierro@ed.ac.uk}

\author{Xavier Ros-Oton}
\address{Universitat Polit\`ecnica de Catalunya, Departament de Matem\`{a}tica  Aplicada I, Diagonal 647, 08028 Barcelona, Spain}
\email{xavier.ros.oton@upc.edu}

\author{Enrico Valdinoci}
\address{Weierstrass Institut f\"ur Angewandte Analysis und Stochastik, Mohrenstrasse 39, 10117 Berlin, Germany}
\email{enrico.valdinoci@wias-berlin.de}

\thanks{The first author was supported by grant EPSRC EP/K024566/1 (Scotland).
The second author was supported by grants MTM2011-27739-C04-01 (Spain) and 2009SGR345 (Catalunya).
The third author was supported by grants ERC-277749 (Europe)
and PRIN-201274FYK7 (Italy).
We thank Gerd Grubb for her very interesting comments
on a previous version of this manuscript.}
\keywords{Nonlocal operators, fractional Laplacian, Neumann problem.}
\subjclass[2010]{35R11, 60G22.}

\begin{abstract}
We introduce a new Neumann problem for the fractional Laplacian arising from a simple probabilistic consideration,
and we discuss the basic properties of this model.
We can consider both elliptic and parabolic equations in any domain.
In addition, we formulate problems with nonhomogeneous Neumann conditions, and also with mixed Dirichlet and Neumann conditions, all of them having a clear probabilistic interpretation.

We prove that solutions to the fractional heat equation with homogeneous Neumann conditions have the following natural properties: conservation of mass inside $\Omega$, decreasing energy, and convergence to a constant as $t\to \infty$.
Moreover, for the elliptic case we give the variational formulation of the problem, and establish existence of solutions.

We also study the limit properties and the boundary behavior induced
by this nonlocal Neumann condition.

For concreteness, one may think that our nonlocal analogue of the classical
Neumann condition~$\partial_\nu u=0$ on~$\partial\Omega$ consists
in the nonlocal prescription
$$ \int_\Omega \frac{u(x)-u(y)}{|x-y|^{n+2s}}\,dy=0 \ {\mbox{ for }} x\in\R^n\setminus\overline{\Omega}.$$
We made an effort to keep all the arguments at the simplest
possible technical level, in order to clarify the conncetions
between the different scientific fields that are naturally involved in the
problem, and make the paper accessible
also to a wide, non-specialistic public (for this scope, we also
tried to use and compare different concepts and notations
in a somehow more unified way).
\end{abstract}

\maketitle

\section{Introduction and results}

The aim of this paper is to introduce the following Neumann problem for the fractional Laplacian
\begin{equation}\label{eq-intro}
\left\{ \begin{array}{rcll}
(-\Delta)^su&=&f&\textrm{in }\Omega \\
\mathcal N_su&=&0&\textrm{in }\R^n\setminus\overline{\Omega}.
\end{array}\right.
\end{equation}
Here, $\mathcal N_s$ is a new ``nonlocal normal derivative'', given by
\begin{equation}\label{normal-s}
\mathcal N_su(x):=c_{n,s}\int_\Omega \frac{u(x)-u(y)}{|x-y|^{n+2s}}\,dy, \qquad x\in \R^n\setminus\overline\Omega.
\end{equation}
The normalization constant $c_{n,s}$ is the one appearing in the definition the fractional Laplacian
\begin{equation}\label{operator}
(-\Delta)^s u(x)=c_{n,s}\,\textrm{PV}\int_{\R^n}\frac{u(x)-u(y)}{|x-y|^{n+2s}}\,dy.
\end{equation}
See \cite{DPV,S-obst} for the basic properties of this operator
(and for further details on the normalization constant~$c_{n,s}$,
whose explicit value only plays a minor role in this paper).

As we will see below, the corresponding heat equation with homogeneous Neumann conditions
\begin{equation}\label{eq-heat-intro}
\left\{ \begin{array}{rclll}
u_t +(-\Delta)^su&=&0&\textrm{in }\Omega,&\ t>0 \\
\mathcal N_su&=&0&\textrm{in }\R^n\setminus\overline{\Omega},&\ t>0\\
u(x,0)&=&u_0(x)&\textrm{in }\Omega,&\ t=0
\end{array}\right.
\end{equation}
possesses natural properties like conservation of mass inside $\Omega$ or convergence to a constant as $t\rightarrow+\infty$ (see Section~\ref{S4}).

The probabilistic interpretation of the Neumann problem \eqref{eq-heat-intro} may be summarized as follows:
\begin{enumerate}
\item $u(x,t)$ is the probability distribution of the position of a particle moving randomly inside $\Omega$.
\item When the particle exits $\Omega$, it immediately comes back into $\Omega$.
\item The way in which it comes back inside $\Omega$ is the following: If the particle has gone to $x\in\R^n\setminus\overline{\Omega}$, it may come back to any point $y\in\Omega$, the probability density
of jumping from $x$ to $y$ being proportional to $|x-y|^{-n-2s}$.
\end{enumerate}
These three properties lead to the equation \eqref{eq-heat-intro}, being $u_0$ the initial probability distribution of the position of the particle.

A variation of formula~\eqref{normal-s} consists in
renormalizing $\mathcal N_s u$ according to the underlying probability law induced by the L\'evy process. This leads to the definition
\begin{equation}\label{N-tilde}
\tilde{\mathcal N}_s u(x):=\frac{\mathcal N_s u(x)}{c_{n,s}\int_{\Omega}\frac{dy}{|x-y|^{n+2s}}}.
\end{equation}

Other Neumann problems for the fractional Laplacian (or other nonlocal operators) were introduced in \cite{BBC,CK}, \cite{BCGJ,BGJ}, \cite{CERW,CERW2,CERW3},  \cite{Grubb2}, and \cite{spectral,spectral2}.
All these different Neumann problems for nonlocal operators recover the classical Neumann problem as a
limit case, and most of them has clear probabilistic interpretations as well.
We postpone to Section~\ref{S1} a comparison between these different models and ours.

An advantage of our approach is that the problem has a variational structure.
In particular, we show that the classical integration by parts
formulae
\begin{eqnarray*}
&& \int_\Omega \Delta u=\int_{\partial\Omega} \partial_\nu u\\
{\mbox{and }}&&
\int_\Omega \nabla u\cdot\nabla v=\int_\Omega v\,(-\Delta)u+
\int_{\partial\Omega} v\partial_\nu u
\end{eqnarray*}
are replaced in our setting by
\[\int_\Omega (-\Delta)^su\,dx=-\int_{\R^n\setminus\Omega} \mathcal N_su\,dx\]
and
\[\frac{c_{n,s}}{2}\int_{\R^{2n}\setminus(\mathcal C\Omega)^2}\frac{\bigl(u(x)-u(y)\bigr)\bigl(v(x)-v(y)\bigr)}{|x-y|^{n+2s}}\,dx\,dy
=\int_\Omega v\,(-\Delta)^su+\int_{\R^n\setminus\Omega} v\,\mathcal N_su.\]
Also, the classical Neumann problem
\begin{equation}\label{CL}
\left\{ \begin{array}{rcll}
-\Delta u&=&f&\textrm{in }\Omega \\
\partial_\nu u&=&g &\textrm{on }\partial\Omega
\end{array}\right.
\end{equation}
comes from critical points of the energy functional
$$ \frac12 \int_\Omega |\nabla u|^2-\int_\Omega fu-\int_{\partial\Omega} g\,u,$$
without trace conditions. In analogy with this, we
show that our nonlocal Neumann condition
\begin{equation}\label{NLC}
\left\{ \begin{array}{rcll}
(-\Delta)^su&=&f&\textrm{in }\Omega \\
\mathcal N_su&=&g&\textrm{in }\R^n\setminus\overline{\Omega}
\end{array}\right.
\end{equation}
follows
from free critical points of the energy functional
$$ \frac{c_{n,s}}{4}\int_{\R^{2n}\setminus(\mathcal C\Omega)^2}\frac{|u(x)-u(y)|^2}{|x-y|^{n+2s}}\,dx\,dy-\int_\Omega f\,u-\int_{\R^n\setminus\Omega} g\,u,$$
see Proposition~\ref{prop free}.
Moreover, as well known, the theory of existence and uniqueness
of solutions for the classical Neumann problem~\eqref{CL} relies on the
compatibility condition
$$ \int_\Omega f =-\int_{\partial\Omega}g.$$
We provide the analogue of this compatibility condition
in our framework, that is
$$ \int_\Omega f=-\int_{\R^n\setminus\Omega} g,$$
see Theorem~\ref{EX}.
Also, we give a description of the spectral properties
of our nonlocal problem, which are in analogy with the classical case.
\medskip

The paper is organized in this way.
In Section~\ref{sec-prob}
we give a probabilistic interpretation of our Neumann condition,
as a random reflection of a particle inside the domain, according to
a L\'evy flight. This also allows us to consider mixed Dirichlet and
Neumann conditions and to get a suitable heat equation from the stochastic process.

In Section~\ref{S3} we consider the variational structure
of the associated nonlocal elliptic problem, we show an existence and uniqueness result
(namely Theorem~\ref{EX}), as follows:

\bigskip

\noindent{\it Let $\Omega\subset\R^n$ be a bounded Lipschitz domain, $f\in L^2(\Omega)$,
and $g\in L^1(\R^n\setminus\Omega)$.
Suppose that there exists a $C^2$ function~$\psi$
such that~$\mathcal{N}_s\psi=g$ in~$\R^n\setminus\overline{\Omega}$.

Then, problem \eqref{NLC} admits a weak solution
if and only if
$$\int_\Omega f=-\int_{\R^n\setminus\Omega} g.
$$
Moreover, if such a compatibility condition holds,
the solution is unique up to an additive constant.
}
\bigskip

Also, we give a description of a sort of generalized
eigenvalues of~$(-\Delta)^s$ with zero
Neumann boundary conditions (see Theorem~\ref{EVAL}):
\bigskip

\noindent{\it
Let $\Omega\subset\R^n$ be a bounded Lipschitz domain.
Then, there exist a sequence of nonnegative values
\[0=\lambda_1<\lambda_2\leq \lambda_3\leq\cdots,\] and
a sequence of functions~$u_i:\R^n\to\R$ such that
\[\left\{ \begin{array}{rcll}
(-\Delta)^su_i(x)&=&\lambda_i u_i(x)&\textrm{for any }x\in\Omega \\ \mathcal
N_su_i(x)&=&0&\textrm{for any }x\in\R^n\setminus\overline{\Omega}.
\end{array}\right.\]
Also, the functions~$u_i$ (when restricted to~$\Omega$)
provide a complete orthogonal system in~$L^2(\Omega)$.
}
\bigskip

By similarity with the classical case, we are tempted
to consider the above~$\lambda_i$ and~$u_i$ as generalized
eigenvalues and eigenfunctions. Though the word ``generalized''
will be omitted from now on for the sake of shortness,
we remark that this spectral notion is not completely standard,
since our eigenfunctions~$u_i$ are defined in the whole of~$\R^n$ but
satisfy the equation~$(-\Delta)^s u_i=\lambda_i u_i$
only in the domain~$\Omega$ (indeed, outside $\Omega$ they verify our
nonlocal Neumann condition). Moreover, the orthogonality
and density properties of~$u_i$ also refer to their restriction in~$\Omega$.
\medskip

In Section~\ref{S4} we discuss the associated heat equation.
As it happens in the classical case, we show that such equation
preserves the mass, it has decreasing energy, and the solutions
approach a constant as $t\to+\infty$. In particular, by the results in
Propositions~\ref{mass}, \ref{eneergy} and~\ref{89} we have:
\bigskip

\noindent{\it
Assume that $u(x,t)$ is a classical solution to
\begin{equation*}
\left\{ \begin{array}{rclll}
u_t +(-\Delta)^su&=&0&\textrm{in }\Omega,&\ t>0 \\
\mathcal N_su&=&0&\textrm{in }\R^n\setminus\overline{\Omega},&\ t>0\\
u(x,0)&=&u_0(x)&\textrm{in }\Omega,&\ t=0.
\end{array}\right.
\end{equation*}
Then the total mass is conserved, i.e. for all $t>0$
\[\int_\Omega u(x,t)\,dx=\int_\Omega u_0(x)dx.\]
Moreover, the energy
\[E(t)=\int_{\R^{2n}\setminus(\mathcal C \Omega)^2} \frac{|u(x,t)-u(y,t)|^2}{|x-y|^{n+2s}}\,dx\,dy\]
is decreasing in time $t>0$.

Finally, the solution approaches a constant for large times: more precisely
\[u\,\longrightarrow\,
\frac{1}{|\Omega|}\int_\Omega u_0\quad \textrm{in}\ L^2(\Omega)\]
as $t\rightarrow+\infty$.
}
\bigskip

In Section~\ref{Slimit} we compute some limits when~$s\to1$,
showing that we can recover the classical case.  In particular,
we show in Proposition~\ref{89X} that:
\bigskip

\noindent{\it
Let $\Omega\subset\R^n$ be any bounded Lipschitz domain.
Let $u$ and $v$ be $C^2_0(\R^n)$ functions.
Then,
\[\lim_{s\rightarrow 1}\int_{\R^n\setminus\Omega}\mathcal N_s u\,v=\int_{\partial\Omega}\frac{\partial u}{\partial\nu}\,v.\]
}
\bigskip

Also, we prove that nice functions can be extended
continuously outside~$\overline\Omega$ in order to satisfy
a homogeneous nonlocal Neumann condition, and we characterize
the boundary behavior of the nonlocal Neumann function.
More precisely, in Proposition~\ref{cont} we show that:
\bigskip

\noindent{\it
Let~$\Omega\subset\R^n$ be a domain with~$C^1$ boundary.
Let~$u$ be continuous in~$\overline\Omega$, with~${\mathcal{N}}_s u=0$
in~$\R^n\setminus\overline\Omega$. Then~$u$ is continuous
in the whole of~$\R^n$.
}
\bigskip

The boundary behavior of the nonolcal Neumann condition is also addressed in Proposition~\ref{89Y}:
\bigskip

\noindent{\it Let $\Omega\subset\R^n$ be a $C^1$ domain, and $u\in C(\R^n)$.
Then, for all $s\in(0,1)$,
$$\lim_{{x\rightarrow\partial\Omega}\atop{x\in\R^n\setminus\overline\Omega}}\tilde{\mathcal N}_s u(x)=0,$$
where $\tilde{\mathcal N}$ is defined by \eqref{N-tilde}.

Also, if $s>\frac12$ and~$u\in C^{1,\alpha}(\R^n)$ for some~$\alpha>0$, then
\begin{equation*}\partial_\nu\tilde{\mathcal N}_s u(x):=
\lim_{\epsilon\to0^+} \frac{\tilde{\mathcal N}_s u(x+\epsilon\nu)
}{\epsilon}=\kappa \, \partial_\nu u\qquad \textrm{for any}\ x\in\partial\Omega,
\end{equation*}
for some constant $\kappa>0$.
}
\bigskip

Later on, in Section~\ref{sover} we deal with an overdetermined problem and we show that
it is not possible to prescribe both nonlocal Neumann and Dirichlet conditions for a continuous function.

Finally, in Section~\ref{S1}
we recall the various nonlocal Neumann conditions already appeared
in the literature, and we compare them with our model.

All the arguments presented are of elementary\footnote{\label{FT1}To keep the notation
as simple as possible, given functions~$f$ and~$g$
and an operator~$T$, we will often write idendities like~``$f=g$ in $\Omega$''
or~``$Tf=g$ in $\Omega$'' to mean ``$f(x)=g(x)$ for every~$x\in\Omega$''
or~``$Tf=g$ for every~$x\in\Omega$'', respectively. Also, if~$u:\R^n\to\R$, we often denote
the restriction of~$u$ to~$\Omega$ again by~$u$.
We hope that this slight abuse of notation creates no problem to
the reader, but for the sake of clarity we also include an appendix
at the end of the paper
in which Theorems~\ref{EX} and~\ref{EVAL} are proved
using a functional analysis notation that distinguishes between
a function and its restriction.}
nature.

\section{Heuristic probabilistic interpretation}
\label{sec-prob}

In this section we will give a simple probabilistic interpretation
of the nonlocal Neumann condition that we consider
in terms of the so-called L\'evy flights. Though
the possible behavior of a general L\'evy process can be
more sophisticated than the one we consider, for the sake of
clarity we will try to restrict ourselves to the simplest
possible scenario and to use the simplest possible language.
For this scope, we will not go into all the very rich details
of the related probability theory and we will not aim to review
all the important, recent results on the topic, but we will rather
present an elementary, self-contained exposition, which we hope can
serve as an introduction also to a non-specialistic public.\medskip

Let us consider the L\'evy process in $\R^n$ whose infinitesimal generator is the fractional Laplacian $(-\Delta)^s$.
Heuristically, we may think that this process represents the (random) movement of a particle along time $t>0$.
As it is well known, the probability density $u(x,t)$ of the position of the particle solves the fractional heat equation $u_t+(-\Delta)^su=0$ in $\R^n$; see \cite{V-SEMA} for a simple illustration of this fact.

Recall that when the particle is situated at $x\in\R^n$, it may jump to any other point $y\in\R^n$, the probability density
of jumping to $y$ being proportional to $|x-y|^{-n-2s}$.

Similarly, one may consider the random movement of a particle inside a bounded domain
$\Omega\subset \R^n$, but in this case one has to decide what happens when the
particle leaves $\Omega$.

In the classical case $s=1$ (when the L\'evy process is the Brownian motion), we have the following:
\begin{enumerate}
\item If the particle is \emph{killed} when it reaches the boundary $\partial\Omega$, then the probability distribution solves the heat equation with homogeneous \emph{Dirichlet} conditions.
\item If, instead, when the particle reaches the boundary $\partial\Omega$ it immediately \emph{comes back into} $\Omega$ (i.e., it bounces on $\partial\Omega$), then the probability distribution solves the heat equation with homogeneous \emph{Neumann} conditions.
\end{enumerate}

In the nonlocal case $s\in(0,1)$, in which the process has jumps,
case (1) corresponds to the following: The particle is killed when it exits $\Omega$.
In this case, the probability distribution $u$ of the process solves
the heat equation with homogeneous Dirichlet conditions $u=0$ in $\R^n\setminus\Omega$,
and solutions to this problem are well understood; see for example \cite{RS-Dir,Grubb,FKV,BCI2}.

The analogue of case (2) is the following: When the particle exits $\Omega$, it immediately comes back into $\Omega$.
Of course, one has to decide how the particle comes back into the domain.

In \cite{BCGJ,BGJ} the idea was to find a deterministic ``reflection'' or ``projection'' which describes the way in which the particle comes back into $\Omega$.

The alternative that we propose here is the following:
If the particle has gone to $x\in\R^n\setminus\overline{\Omega}$, then it may come back to \emph{any} point $y\in\Omega$, the probability density of
jumping from $x$ to $y$ being proportional to $|x-y|^{-n-2s}$.

Notice that this is exactly the (random) way as the particle is moving all the time, here we just add the restriction that it has to immediately come back into $\Omega$ every time it goes outside.

Let us finally illustrate how this random process leads
to problems \eqref{eq-intro} or \eqref{eq-heat-intro}.
In fact, to make the exposition easier, we will explain the case of mixed Neumann and
Dirichlet conditions, which, we think, is very natural.

\subsection{Mixed Dirichlet and Neumann conditions}

Assume that we have some domain $\Omega\subset\R^n$, and that its complement $\R^n\setminus\overline{\Omega}$ is splitted into two parts: $N$ (with Neumann conditions), and $D$ (with Dirichlet conditions).

Consider a particle moving randomly, starting inside $\Omega$.
When the particle reaches $D$, it obtains a payoff $\phi(x)$, which depends on the point $x\in D$ where the particle arrived.
Instead, when the particle reaches $N$ it immediately comes back to $\Omega$ as described before.

If we denote $u(x)$ the expected payoff, then we clearly have
\[(-\Delta)^su=0\quad \textrm{in}\ \Omega\]
and
\[u=\phi\quad \textrm{in}\ D,\]
where $\phi:D\longrightarrow \R$ is a given function.

Moreover, recall that when the particle is in $x\in N$ then it goes back to some point $y\in\Omega$, with probability proportional to $|x-y|^{-n-2s}$.
Hence, we have that
\[u(x)=\kappa\int_\Omega \frac{u(y)}{|x-y|^{n+2s}}\,dy\qquad \textrm{for}\ x\in N,\]
for some constant $\kappa$, possibly depending on the point~$x\in N$, that has been fixed.
In order to normalize the probability measure,
the value of the constant $\kappa$ is so that
$$ \kappa\int_\Omega \frac{dy}{|x-y|^{n+2s}}=1.$$

Finally, the previous identity can be written as
\[\mathcal N_su(x)=c_{n,s}\int_\Omega \frac{u(x)-u(y)}{|x-y|^{n+2s}}\,dy=0\quad \textrm{for}\ x\in N,\]
and therefore $u$ solves
\[\left\{ \begin{array}{rcll}
(-\Delta)^su&=&0&\textrm{in }\Omega \\
\mathcal N_su&=&0&\textrm{in }N\\
u&=&\phi&\textrm{in }D,
\end{array}\right.\]
which is a nonlocal problem with mixed Neumann and Dirichlet conditions.

Note that the previous problem is the nonlocal analogue of
\[\left\{ \begin{array}{rcll}
-\Delta u&=&0&\textrm{in }\Omega \\
\partial_\nu u&=&0&\textrm{in }\Gamma_N\\
u&=&\phi&\textrm{in }\Gamma_D,
\end{array}\right.\]
being $\Gamma_D$ and $\Gamma_N$ two disjoint subsets of $\partial\Omega$,
in which classical Dirichlet and Neumann boundary conditions are prescribed.

More generally, the classical Robin condition $a\partial_\nu u+b u=c$
on some~$\Gamma_R\subseteq \partial\Omega$
may be replaced in our nonlocal framework by~$a\mathcal N_s u+b u=c$
on some~$R\subseteq \R^n\setminus\overline{\Omega}$. Nonlinear boundary
conditions may be considered in a similar way.

\subsection{Fractional heat equation, nonhomogeneous Neumann conditions}

Let us consider now the random movement of the particle inside $\Omega$, with our new Neumann conditions in $\R^n\setminus\overline{\Omega}$.

Denoting $u(x,t)$ the probability density of the position of the particle at time $t>0$, with a similar discretization argument as in \cite{V-SEMA}, one can see that $u$ solves the fractional heat equation
\[u_t+(-\Delta)^su=0\quad \textrm{ in }\Omega\quad\, \textrm{for}\ t>0,\]
with
\[\mathcal N_su=0\quad \textrm{ in }\R^n\setminus\overline{\Omega}\quad\, \textrm{for}\ t>0.\]
Thus, if $u_0$ is the initial probability density, then $u$ solves problem \eqref{eq-heat-intro}.

Of course, one can now see that with this probabilistic interpretation there is no problem in considering a right hand side $f$ or nonhomogeneous Neumann conditions
\[\left\{ \begin{array}{rcll}
u_t+(-\Delta)^su&=&f(x,t,u)&\textrm{in }\Omega \\
\mathcal N_su&=&g(x,t)&\textrm{in }\R^n\setminus\overline{\Omega}.
\end{array}\right.\]
In this case, $g$ represents a ``nonlocal flux'' of new particles coming from outside $\Omega$, and $f$ would represent a reaction term.

\section{The elliptic problem}\label{S3}

Given $g\in L^1(\R^n\setminus\Omega)$ and measurable
functions~$u,v:\R^n\rightarrow\R$, we set
\begin{equation}\begin{split}\label{norm}
\|u\|_{H^s_{\Omega,g}}&:= \sqrt{ \|u\|_{L^2(\Omega)}^2+
\||g|^{1/2}\,u\|_{L^2(\R^n\setminus\Omega)}^2
+\int_{\R^{2n}\setminus(\mathcal C\Omega)^2}
\frac{|u(x)-u(y)|^2}{|x-y|^{n+2s}}\,dx\,dy }
\end{split}\end{equation}
and
\begin{equation}\begin{split}\label{scalar}
(u,v)_{H^s_{\Omega,g}}& := \int_{\Omega}u\,v\, dx+ \int_{\R^n\setminus\Omega}|g|\,u\, v\, dx \\
&\qquad + \int_{\R^{2n}\setminus(\mathcal C\Omega)^2}\frac{(u(x)-u(y))(v(x)-v(y))}{|x-y|^{n+2s}}
\,dx\,dy.
\end{split}\end{equation}
Then, we define the space
\[H^s_{\Omega,g}:=\left\{u:\R^n\rightarrow\R \ {\mbox{ measurable }}\ :\,
\|u\|_{H^s_{\Omega,g}}<+\infty\right\}.\]
We will also write~$H^s_{\Omega,0}$ to mean~$H^s_{\Omega,g}$
with $g\equiv0$.

\begin{prop}
$H^s_{\Omega,g}$ is a Hilbert space with the scalar product defined in~\eqref{scalar}.
\end{prop}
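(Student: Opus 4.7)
The plan is to verify three things: bilinearity and symmetry of the form \eqref{scalar}, positive definiteness, and completeness of the induced norm. The first is immediate from the definition, so the work lies in the other two.

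For positive definiteness (on equivalence classes of measurable functions modulo a.e.\ equality), suppose $\|u\|_{H^s_{\Omega,g}}=0$. The $L^2(\Omega)$-term forces $u=0$ a.e.\ on $\Omega$, and the Gagliardo-type integral then forces $u(x)=u(y)$ for a.e.\ $(x,y)\in\R^{2n}\setminus(\mathcal{C}\Omega)^2$. Restricting to $(x,y)\in(\R^n\setminus\Omega)\times\Omega$ and using Fubini, almost every $x\in\R^n\setminus\Omega$ satisfies $u(x)=u(y)$ for a.e.\ $y\in\Omega$, forcing $u=0$ a.e.\ outside $\Omega$ as well.

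The heart of the proof is completeness. Given a Cauchy sequence $(u_k)$, the $L^2(\Omega)$-part of the norm directly produces a limit $u$ on $\Omega$ with $u_k\to u$ in $L^2(\Omega)$. The main obstacle is to obtain an a.e.\ pointwise limit on $\R^n\setminus\Omega$: the weighted second term only yields convergence in $L^2(|g|\,dx)$, which says nothing on $\{g=0\}$. I would extract the missing information from the nonlocal seminorm via a Jensen-type bound. Pick a closed ball $B\Subset\Omega$ (possible since $\Omega$ is open and nonempty) and any bounded measurable set $K\subset\R^n\setminus\Omega$. Setting $D:=\mathrm{diam}(K\cup B)<\infty$ and $v_B:=|B|^{-1}\int_B v$, the Cauchy--Schwarz bound $|v(y)-v_B|^2\leq |B|^{-1}\int_B |v(y)-v(x)|^2\,dx$ together with $|x-y|\leq D$ on $B\times K$ gives, after integrating in $y\in K$,
\[\int_K |v(y)-v_B|^2\,dy\leq \frac{D^{n+2s}}{|B|}\int_B\int_K\frac{|v(x)-v(y)|^2}{|x-y|^{n+2s}}\,dy\,dx.\]
Applied to $v=u_k-u_j$, and combined with convergence of the scalar averages $(u_k)_B$ (a consequence of $L^2(\Omega)$ convergence), this shows $(u_k|_K)$ is Cauchy in $L^2(K)$. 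Exhausting $\R^n\setminus\Omega$ by countably many such $K$'s produces $u_k\to u$ in $L^2_{\mathrm{loc}}(\R^n)$, and along a subsequence, $u_k\to u$ a.e.\ on $\R^n$.

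Finally, I would apply Fatou's lemma to each of the three nonnegative pieces of $\|u_k-u_m\|^2_{H^s_{\Omega,g}}$ along this subsequence as $m\to\infty$. This yields $\|u_k-u\|_{H^s_{\Omega,g}}^2\leq \liminf_m \|u_k-u_m\|^2_{H^s_{\Omega,g}}\to 0$ as $k\to\infty$ by the Cauchy property, and simultaneously gives $u\in H^s_{\Omega,g}$ (writing $u=u_{k_0}+(u-u_{k_0})$). The delicate step is the previous paragraph: upgrading possibly degenerate weighted convergence outside $\Omega$ to genuine $L^2_{\mathrm{loc}}$ convergence there, with no non-degeneracy hypothesis on $g$.
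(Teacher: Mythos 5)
Your proof is correct, and the key step differs from the paper's in an interesting way, so let me compare.

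Both proofs agree on the easy parts (bilinearity, positive definiteness) and both agree that the crux of completeness is extracting a pointwise limit of $u_k$ on $\R^n\setminus\Omega$, where neither the $L^2(\Omega)$ term nor the weighted $L^2(|g|\,dx)$ term gives information. You handle this with a quantitative Jensen--Cauchy--Schwarz estimate: fixing a ball $B\Subset\Omega$ and a bounded $K\subset\R^n\setminus\Omega$, you bound $\|v-v_B\|_{L^2(K)}^2$ by the Gagliardo double integral over $B\times K$, which is controlled by $\|v\|_{H^s_{\Omega,g}}^2$ since $|x-y|\leq D$ on $B\times K$ and $\Omega\times(\mathcal C\Omega)\subset\R^{2n}\setminus(\mathcal C\Omega)^2$. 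This turns the seminorm Cauchy property directly into an $L^2_{\mathrm{loc}}$ Cauchy property outside $\Omega$, from which a.e.\ convergence follows by exhaustion and a diagonal subsequence. The paper proceeds differently: it introduces the kernel $E_U(x,y)=\bigl(U(x)-U(y)\bigr)\chi_{\R^{2n}\setminus(\mathcal C\Omega)^2}(x,y)\,|x-y|^{-(n+2s)/2}$, notes that $E_{u_k}$ is Cauchy in $L^2(\R^{2n})$ hence (along a subsequence) convergent a.e.\ in $\R^{2n}$, and then uses Fubini on the exceptional null set to locate a \emph{single} point $x_0\in\Omega$ at which both $u_k(x_0)$ converges and $E_{u_k}(x_0,\cdot)$ converges a.e.; the identity $u_k(y)=u_k(x_0)-|x_0-y|^{(n+2s)/2}E_{u_k}(x_0,y)$ then hands over a.e.\ convergence of $u_k$ on $\mathcal C\Omega$. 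Your route trades the paper's measure-theoretic bookkeeping (the sets $Z_1,Z_2,S_x,W,V$) for an explicit integral inequality and gives the stronger intermediate conclusion of $L^2_{\mathrm{loc}}$ convergence off $\Omega$, at the cost of an exhaustion/diagonal step that the paper avoids by tethering everything to the single point $x_0$. Both then finish identically via Fatou on the three nonnegative pieces of $\|u_k-u_m\|_{H^s_{\Omega,g}}^2$.
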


\begin{proof} We point out that~\eqref{scalar} is a bilinear form
and~$ \|u\|_{H^s_{\Omega,g}}=\sqrt{(u,u)_{H^s_{\Omega,g}}}$. Also,
if~$\|u\|_{H^s_{\Omega,g}}=0$, it follows that~$\|u\|_{L^2(\Omega)}=0$,
hence~$u=0$ a.e. in~$\Omega$, and that
$$ \int_{\R^{2n}\setminus(\mathcal C\Omega)^2}
\frac{|u(x)-u(y)|^2}{|x-y|^{n+2s}}\,dx\,dy=0,$$
which in turn implies that~$|u(x)-u(y)|=0$ for any~$(x,y)\in
\R^{2n}\setminus(\mathcal C\Omega)^2$. In particular, a.e.~$x\in\mathcal C\Omega$
and~$y\in\Omega$ we have that
$$ u(x)=u(x)-u(y)=0.$$
This shows that~$u=0$ a.e. in $\R^n$, so it remains to prove that~$H^s_{\Omega,g}$
is complete. For this, we take a Cauchy sequence~$u_k$
with respect to the norm in~\eqref{norm}.

In particular, $u_k$ is a Cauchy sequence in~$L^2(\Omega)$
and therefore, up to a subsequence, we suppose that $u_k$ converges
to some~$u$ in~$L^2(\Omega)$ and a.e. in~$\Omega$. More explicitly,
there exists~$Z_1\subset\R^n$ such that
\begin{equation}\label{H2}
{\mbox{$|Z_1|=0$ and $u_k(x)\to u(x)$ for every $x\in\Omega\setminus Z_1$.}}
\end{equation}
Also, given any~$U:\R^n\rightarrow\R$, for any~$(x,y)\in\R^{2n}$ we define
\begin{equation}\label{H7}
E_U(x,y):=\frac{\Big(U(x)-U(y)\Big)\,
\chi_{\R^{2n}\setminus(\mathcal C\Omega)^2}(x,y)}{|x-y|^{\frac{n+2s}{2}}}.\end{equation}
Notice that
$$ E_{u_k}(x,y)-E_{u_h}(x,y)=
\frac{\Big(u_k(x)-u_h(x)-u_k(y)+u_h(y)\Big)\,
\chi_{\R^{2n}\setminus(\mathcal C\Omega)^2}(x,y)}{|x-y|^{\frac{n+2s}{2}}}.$$
Accordingly, since~$u_k$ is a Cauchy sequence in~$H^s_{\Omega,g}$,
for any~$\epsilon>0$ there exists~$N_\epsilon\in\N$ such that
if~$h$, $k\ge N_\epsilon$ then
$$\epsilon^2\ge \int_{\R^{2n}\setminus(\mathcal C\Omega)^2}
\frac{|(u_k-u_h)(x)-(u_k-u_h)(y)|^2}{|x-y|^{n+2s}}\,dx\,dy
=\| E_{u_k}-E_{u_h}\|^2_{L^2(\R^{2n})}.$$
That is, $E_{u_k}$ is a Cauchy sequence in $L^2(\R^{2n})$
and thus, up to a subsequence, we assume that~$E_{u_k}$
converges to some~$E$ in~$L^2(\R^{2n})$ and a.e. in~$\R^{2n}$.
More explicitly, there exists~$Z_2\subset\R^{2n}$ such that
\begin{equation}\label{H0}
{\mbox{$|Z_2|=0$ and $E_{u_k}(x,y)\to E(x,y)$ for every $(x,y)\in\R^{2n}
\setminus Z_2$.}}
\end{equation}
Now, for any $x\in\Omega$, we set
\begin{eqnarray*} && S_x:=\{ y\in\R^n\ :\, (x,y)\in \R^{2n}\setminus Z_2\},\\
&& W:=\{(x,y)\in \R^{2n} \ :\, x\in\Omega {\mbox{ and }}
y\in\R^n\setminus S_x\}\\
{\mbox{and }}&& V:=\{ x\in\Omega \ :\, |\R^n\setminus S_x|=0\}.
\end{eqnarray*}
We remark that
\begin{equation}\label{H200}
W\subseteq Z_2.
\end{equation}
Indeed, if $(x,y)\in W$, then~$y\in\R^n\setminus S_x$,
hence~$(x,y)\not\in \R^{2n}\setminus Z_2$, and so~$(x,y)\in Z_2$,
which gives~\eqref{H200}.

Using~\eqref{H0} and~\eqref{H200}, we obtain that~$|W|=0$,
hence by the Fubini's Theorem we have that
$$ 0=|W| =\int_\Omega |\R^n\setminus S_x|\,dx,$$
which implies that~$|\R^n\setminus S_x|=0$ for a.e.~$x\in\Omega$.

As a consequence, we conclude that~$|\Omega\setminus V|=0$.
This and~\eqref{H2} imply that
$$ |\Omega\setminus (V\setminus Z_1)|
=|(\Omega\setminus V)\cup Z_1|\le |\Omega\setminus V|+|Z_1|=0.$$
In particular~$V\setminus Z_1\ne\varnothing$, so we can fix~$x_0
\in V\setminus Z_1$.

Since~$x_0\in \Omega\setminus Z_1$, equation~\eqref{H2} implies
$$ \lim_{k\to+\infty} u_k(x_0)=u(x_0).$$
Furthermore, since~$x_0\in V$
we have that~$|\R^n\setminus S_{x_0}|=0$. As a consequence, a.e. $y\in\R^n$
(namely, for every~$y\in S_{x_0}$),
we have that~$(x_0,y)\in\R^{2n}\setminus Z_2$ and so
$$ \lim_{k\to+\infty} E_{u_k}(x_0,y)= E(x_0,y),$$
thanks to~\eqref{H0}.
Notice also that~$\Omega\times(\mathcal C \Omega)\subseteq
\R^{2n}\setminus(\mathcal C\Omega)^2$ and so, recalling~\eqref{H7}, we get
$$
E_{u_k}(x_0,y):=\frac{u_k(x_0)-u_k(y)}{|x_0-y|^{\frac{n+2s}{2}}},$$
for a.e. $y\in\mathcal C\Omega$.
Thus, we obtain
\begin{eqnarray*}
\lim_{k\to+\infty}u_k(y)&=&
\lim_{k\to+\infty} \left\{u_k(x_0)-|x_0-y|^{\frac{n+2s}{2}}E_{u_k}(x_0,y)\right\} \\
&=& u(x_0)-|x_0-y|^{\frac{n+2s}{2}}E(x_0,y),
\end{eqnarray*}
a.e.~$y\in\mathcal C\Omega$.

This and~\eqref{H2} say that~$u_k$ converges a.e. in~$\R^n$.
Up to a change of notation, we will say that~$u_k$ converges a.e.
in~$\R^n$ to some~$u$. So, using that~$u_k$ is a Cauchy sequence
in~$H^s_{\Omega,g}$, fixed any~$\epsilon>0$ there exists~$N_\epsilon\in\N$
such that, for any~$h\ge N_\epsilon$,
\begin{eqnarray*}
\epsilon^2 &\ge& \liminf_{k\to+\infty}\|u_h-u_k\|_{H^s_{\Omega,g}}^2 \\
&\ge& \liminf_{k\to+\infty} \int_\Omega (u_h-u_k)^2
+\liminf_{k\to+\infty} \int_{\mathcal C\Omega} |g|(u_h-u_k)^2\\ &&\qquad+
\liminf_{k\to+\infty}
\int_{\R^{2n}\setminus(\mathcal C\Omega)^2}
\frac{|(u_h-u_k)(x)-(u_h-u_k)(y)|^2}{|x-y|^{n+2s}}\,dx\,dy
\\
&\ge& \int_\Omega (u_h-u)^2
+\int_{\mathcal C\Omega} |g|(u_h-u)^2+
\int_{\R^{2n}\setminus(\mathcal C\Omega)^2}
\frac{|(u_h-u)(x)-(u_h-u)(y)|^2}{|x-y|^{n+2s}}\,dx\,dy
\\ &=&
\|u_h-u\|_{H^s_{\Omega,g}}^2,
\end{eqnarray*}
where Fatou's Lemma was used. This says that~$u_h$ converges to~$u$
in~$H^s_{\Omega,g}$, showing that~$H^s_{\Omega,g}$ is complete.
\end{proof}

\subsection{Some integration by parts formulas}

The following is a nonlocal analogue of the divergence theorem.

\begin{lem}\label{lema1}
Let $u$ be any bounded $C^2$ function in $\R^n$.
Then,
\[\int_\Omega (-\Delta)^su=-\int_{\R^n\setminus\Omega} \mathcal N_su.\]
\end{lem}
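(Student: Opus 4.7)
My plan is to unfold all the definitions, split the domain of the inner integral, and identify the two resulting pieces: one vanishes by antisymmetry, and the other becomes $-\int_{\R^n\setminus\Omega}\mathcal N_s u$ after an application of Fubini plus a renaming of the dummy variables.

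\medskip

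More concretely, writing~$(-\Delta)^s u$ as the principal value integral in~\eqref{operator} and splitting~$\R^n=\Omega\cup(\R^n\setminus\Omega)$, I would express
\[
\int_\Omega (-\Delta)^s u(x)\,dx = c_{n,s}\,\mathrm{PV}\!\int_\Omega\!\int_\Omega\frac{u(x)-u(y)}{|x-y|^{n+2s}}\,dy\,dx
+c_{n,s}\int_\Omega\!\int_{\R^n\setminus\Omega}\frac{u(x)-u(y)}{|x-y|^{n+2s}}\,dy\,dx.
\]
The second term contains no singularity at~$x=y$, so Fubini applies directly; swapping the order of integration and then renaming the variables $x\leftrightarrow y$ turns the integrand into $\frac{u(y)-u(x)}{|x-y|^{n+2s}}$ and produces precisely $-\int_{\R^n\setminus\Omega}\mathcal N_s u(x)\,dx$ by the definition~\eqref{normal-s}. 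The integrability needed for Fubini here is easy because, for~$x\in\Omega$ and~$y\in\R^n\setminus\Omega$, one has~$|x-y|$ bounded below only when $x$ stays away from~$\partial\Omega$; the bound $|u(x)-u(y)|\le C\min\{1,|x-y|\}$ (valid for bounded $C^2$ functions) together with $\Omega$ bounded makes the double integral absolutely convergent.

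\medskip

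The first term is where the PV has to be handled. I would interpret the double PV integral as
\[
\lim_{\epsilon\to 0^+}\int_\Omega\!\int_{\Omega\setminus B_\epsilon(x)}\frac{u(x)-u(y)}{|x-y|^{n+2s}}\,dy\,dx,
\]
interchanging the~$\epsilon$-limit with the outer~$dx$ integration via dominated convergence (the inner integral converges to~$(-\Delta)^s u(x)$ pointwise, and the usual symmetrization bound $|2u(x)-u(x+z)-u(x-z)|\le C|z|^2$ for~$u\in C^2$ gives a uniform-in-$\epsilon$ bound integrable on the bounded set~$\Omega$). For each fixed~$\epsilon$, the truncated domain $\{(x,y)\in\Omega\times\Omega: |x-y|>\epsilon\}$ is symmetric under $x\leftrightarrow y$, while the integrand changes sign under this swap; hence the truncated integral is identically zero, and so is the limit.

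\medskip

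Putting the two contributions together yields the stated identity. The only delicate point is justifying the swap of the PV limit with the outer integral; everything else is a clean application of Fubini plus antisymmetry. This is the same computation that will underlie the second integration-by-parts identity announced in the introduction, so it also sets up notation for that companion result.
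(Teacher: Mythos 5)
Your proposal is correct and follows essentially the same route as the paper's proof: split the inner integral over $\R^n$ into $\Omega$ and $\R^n\setminus\Omega$, kill the $\Omega\times\Omega$ piece by the antisymmetry of the integrand under $x\leftrightarrow y$, and recognize the remaining piece as $-\int_{\R^n\setminus\Omega}\mathcal N_s u$ after Fubini and a relabeling. You are a bit more explicit than the paper about the principal-value truncation and the dominated-convergence step, but the mathematical content is identical.
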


\begin{proof}
Note that
\[\int_\Omega\int_\Omega \frac{u(x)-u(y)}{|x-y|^{n+2s}}\,dx\, dy=\int_\Omega\int_\Omega \frac{u(y)-u(x)}{|x-y|^{n+2s}}\,dx\, dy=0,\]
since the role of~$x$ and~$y$ in the integrals above is symmetric.
Hence, we have that
\[\begin{split}
\int_\Omega (-\Delta)^su\,dx&=c_{n,s}\int_\Omega \int_{\R^n}\frac{u(x)-u(y)}{|x-y|^{n+2s}}\,dy\, dx=c_{n,s}\int_\Omega \int_{\R^n\setminus\Omega}\frac{u(x)-u(y)}{|x-y|^{n+2s}}\,dy\, dx\\
&= c_{n,s}\int_{\R^n\setminus\Omega}\int_\Omega\frac{u(x)-u(y)}{|x-y|^{n+2s}}\,dx\, dy=-\int_{\R^n\setminus\Omega} \mathcal N_su(y)\,dy,
\end{split}\]
as desired.
\end{proof}

More generally, we have the following integration by parts formula.

\begin{lem}\label{lema2}
Let $u$ and $v$ be bounded $C^2$ functions in $\R^n$.
Then,
\[\frac{c_{n,s}}{2}\int_{\R^{2n}\setminus(\mathcal C\Omega)^2}\frac{\bigl(u(x)-u(y)\bigr)
\bigl(v(x)-v(y)\bigr)}{|x-y|^{n+2s}}\,dx\,dy
=\int_\Omega v\,(-\Delta)^su+\int_{\R^n\setminus\Omega} v\,\mathcal N_su\,,\]
where $c_{n,s}$ is the constant in \eqref{operator}.
\end{lem}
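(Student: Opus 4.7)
The plan is to reduce the double integral on $\R^{2n}\setminus(\mathcal C\Omega)^2$ to two single integrals, one on $\Omega$ and one on $\R^n\setminus\Omega$, by exploiting the $(x,y)$-symmetry of both the integrand and the domain, and then splitting the remaining inner integral according to the partition of $\R^n$ into $\Omega$ and $\mathcal C\Omega$.

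First I would observe that the set $A:=\R^{2n}\setminus(\mathcal C\Omega)^2$ is invariant under the swap $(x,y)\mapsto (y,x)$, and that the kernel $|x-y|^{-n-2s}$ is symmetric as well. Expanding the product
\[
\bigl(u(x)-u(y)\bigr)\bigl(v(x)-v(y)\bigr)=u(x)v(x)+u(y)v(y)-u(x)v(y)-u(y)v(x),
\]
relabelling $x\leftrightarrow y$ in the terms involving $y$ alone, and collecting, one gets
\[
\frac12\int_A\frac{\bigl(u(x)-u(y)\bigr)\bigl(v(x)-v(y)\bigr)}{|x-y|^{n+2s}}\,dx\,dy
=\int_A v(x)\,\frac{u(x)-u(y)}{|x-y|^{n+2s}}\,dx\,dy.
\]
This is the only nontrivial manipulation in the proof; the boundedness and $C^2$ regularity of $u$ and $v$ (together with the cut-off at the diagonal built into $A$, since the only potential singularity $x=y$ occurs inside $\Omega\times\Omega$ where the symmetry argument cancels the singular part) ensure absolute integrability after the symmetrization, so Fubini is safe.

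Next I would decompose $A$ according to the position of $x$. Directly from $A=\R^{2n}\setminus(\mathcal C\Omega\times\mathcal C\Omega)$, for $x\in\Omega$ every $y\in\R^n$ is admissible, while for $x\in\mathcal C\Omega$ only $y\in\Omega$ is admissible. Hence
\[
\int_A v(x)\,\frac{u(x)-u(y)}{|x-y|^{n+2s}}\,dx\,dy
=\int_{\Omega}v(x)\!\int_{\R^n}\frac{u(x)-u(y)}{|x-y|^{n+2s}}\,dy\,dx
+\int_{\mathcal C\Omega}v(x)\!\int_{\Omega}\frac{u(x)-u(y)}{|x-y|^{n+2s}}\,dy\,dx,
\]
where in the first summand the inner integral is understood in the principal value sense. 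Multiplying by $c_{n,s}$ and recognizing the definitions \eqref{operator} and \eqref{normal-s}, the two pieces become precisely $\int_\Omega v\,(-\Delta)^s u\,dx$ and $\int_{\R^n\setminus\Omega}v\,\mathcal N_s u\,dx$, which yields the claimed formula.

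The only delicate point is the handling of the principal value in the first inner integral; I would address it by writing, for each fixed $x\in\Omega$, $(-\Delta)^s u(x)=c_{n,s}\lim_{\varepsilon\to 0}\int_{\R^n\setminus B_\varepsilon(x)}\frac{u(x)-u(y)}{|x-y|^{n+2s}}\,dy$, whose existence and boundedness follow from $u\in C^2\cap L^\infty$, and then invoking dominated convergence to pass the limit outside the outer integral against $v$. Apart from this small technicality, the argument is purely bookkeeping based on symmetry, so I do not anticipate a real obstacle; the formula is essentially Lemma~\ref{lema1} applied with the weight $v$ once the symmetrization has been carried out.
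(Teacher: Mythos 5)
Your proof is correct and takes essentially the same approach as the paper: symmetrize the integrand under $(x,y)\mapsto(y,x)$ to replace $(u(x)-u(y))(v(x)-v(y))$ by $2\,v(x)(u(x)-u(y))$, then split $\R^{2n}\setminus(\mathcal C\Omega)^2$ into $\Omega\times\R^n$ and $(\mathcal C\Omega)\times\Omega$ and read off the definitions of $(-\Delta)^s$ and $\mathcal N_s$. The paper's proof simply writes down the resulting decomposition in a single display, leaving the symmetrization and the principal-value convention implicit, whereas you spell them out; the substance is identical.
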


\begin{proof}
Notice that
\[\begin{split}
&\frac12\int_{\R^{2n}\setminus(\mathcal C\Omega)^2}\frac{\bigl(u(x)-u(y)\bigr)\bigl(v(x)-v(y)\bigr)}{|x-y|^{n+2s}}\,dx\,dy\\
&\qquad =\int_{\Omega}\int_{\R^n}v(x)\,\frac{u(x)-u(y)}{|x-y|^{n+2s}}\,dy\,dx +\int_{\R^n\setminus\Omega}\int_\Omega v(x)\,\frac{u(x)-u(y)}{|x-y|^{n+2s}}\,dy\,dx.\end{split}\]
Thus, using \eqref{operator} and \eqref{normal-s}, the identity follows.
\end{proof}

\begin{rem}\label{rem1}
We recall that if one takes $\partial_\nu u=1$,
then one can obtain the perimeter of $\Omega$ by integrating
this Neumann condition over~$\partial\Omega$. Indeed,
\begin{equation}\label{per local}
|\partial\Omega|=\int_{\partial\Omega}dx= \int_{\partial\Omega}\partial_\nu u\, dx.
\end{equation}
Analogously, we can define $\tilde{\mathcal N}_s u$,
by renormalizing $\mathcal N_s u$ by a factor
$$ w_{s, \Omega}(x):= c_{n,s}\int_{\Omega}\frac{dy}{|x-y|^{n+2s}}, $$
that is
\begin{equation}\label{renormalized} \tilde{\mathcal N}_s u(x):=\frac{\mathcal N_s u(x)}{w_{s,\Omega}(x)} \
{\mbox{ for }}\ x\in\R^n\setminus\overline{\Omega}. \end{equation}
Now, we observe that if $\tilde{\mathcal N}_su(x)=1$ for any~$x\in\R^n\setminus\overline{\Omega}$, then
we find the fractional perimeter (see~\cite{CRS} where this object was introduced)
by integrating such nonlocal Neumann condition over~$\R^n\setminus\Omega$, that is:
\begin{eqnarray*}
\Per_s(\Omega)&:=& c_{n,s}\int_{\Omega}\int_{\R^n\setminus\Omega}\frac{dx\,dy}{|x-y|^{n+2s}}\\
&=& \int_{\R^n\setminus\Omega}w_{s, \Omega}(x)\,dx \\
&=& \int_{\R^n\setminus\Omega}w_{s, \Omega}(x)\,\tilde{\mathcal N}_s u(x)\, dx \\
&=& \int_{\R^n\setminus\Omega}\mathcal N_s u(x)\, dx,
\end{eqnarray*}
that can be seen as the nonlocal counterpart of~\eqref{per local}.
\end{rem}

\begin{rem}\label{rem2}
The renormalized Neumann condition
in~\eqref{renormalized} can also be framed into the probabilistic
interpretation of Section~\ref{sec-prob}.

Indeed suppose that $\mathcal C\Omega$
is partitioned into a Dirichlet part~$D$ and a Neumann part~$N$
and that:
\begin{itemize}
\item our L\'evy process receives a final payoff~$\phi(x)$
when it leaves the domain~$\Omega$ by landing at the point~$x$ in~$D$,
\item if the L\'evy process leaves $\Omega$ by landing at the point~$x$
in~$N$, then it receives an additional payoff~$\psi(x)$
and is forced to come back to~$\Omega$ and keep running by following the same probability law
(the case discussed in Section~\ref{sec-prob} is the model situation
in which~$\psi\equiv0$).
\end{itemize}
In this setting, the expected payoff~$u(x)$ obtained by starting the
process at the point~$x\in\Omega$ satisfies~$(-\Delta)^s u=0$ in~$\Omega$
and~$u=\phi$ in~$D$. Also, for any~$x\in N$, the expected payoff
landing at~$x$ must be equal to the additional payoff~$\psi(x)$
plus the average payoff~$u(y)$ obtained by jumping from~$x$ to~$y\in\Omega$,
that is:
$$ {\mbox{for any $x\in N$, }} \quad
u(x)=\psi(x)+\frac{\displaystyle\int_\Omega\frac{u(y)}{|x-y|^{n+2s}}\,dy}{
\displaystyle\int_\Omega\frac{dy}{|x-y|^{n+2s}}},$$
which corresponds to~$\tilde{\mathcal N}_s u(x)=\psi(x)$.
\end{rem}

\subsection{Weak solutions with Neumann conditions}

The integration by parts formula from Lemma \ref{lema2} leads to the following:

\begin{defi}\label{def weak}
Let $f\in L^{2}(\Omega)$ and~$g\in L^1(\R^n\setminus\Omega)$.
Let $u\in H^s_{\Omega,0}$. We say that $u$ is a weak solution of
\begin{equation}\label{elliptic-pb}
\left\{ \begin{array}{rcll}
(-\Delta)^su&=&f&\textrm{in }\Omega \\
\mathcal N_su&=&g&\textrm{in }\R^n\setminus\overline{\Omega}
\end{array}\right.
\end{equation}
whenever
\begin{equation}\label{weak}
\frac{c_{n,s}}{2}\int_{\R^{2n}\setminus(\mathcal C\Omega)^2}
\frac{\bigl(u(x)-u(y)\bigr)\bigl(v(x)-v(y)\bigr)}{|x-y|^{n+2s}}\,dx\,dy=
\int_\Omega f\,v+\int_{\R^n\setminus\Omega} g\,v
\end{equation}
for all test functions $v\in H^s_{\Omega,g}$.
\end{defi}

With this definition, we can prove the following.

\begin{prop}\label{prop free}
Let $f\in L^2(\Omega)$ and~$g\in L^1(\R^n\setminus\Omega)$.
Let~$I:H^s_{\Omega,g}\rightarrow\R$ be the functional defined as
\[I[u]:=\frac{c_{n,s}}{4}\int_{\R^{2n}\setminus(\mathcal C\Omega)^2}\frac{|u(x)-u(y)|^2}{|x-y|^{n+2s}}\,dx\,dy-
\int_\Omega f\,u-\int_{\R^n\setminus\Omega} g\,u\]
for every~$u\in H^s_{\Omega,g}$.

Then any critical point of~$I$ is a weak solution of \eqref{elliptic-pb}.
\end{prop}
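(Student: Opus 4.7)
The plan is to compute the first variation of $I$ in the standard way: given a critical point $u\in H^s_{\Omega,g}$ and an arbitrary test function $v\in H^s_{\Omega,g}$, I would consider the real-valued function
$$\phi(t):=I[u+tv],\qquad t\in\R,$$
observe that $u+tv\in H^s_{\Omega,g}$ since $H^s_{\Omega,g}$ is a vector space, and then translate the criticality condition into $\phi'(0)=0$. Expanding the quadratic term
$$|u(x)+tv(x)-u(y)-tv(y)|^2=|u(x)-u(y)|^2+2t\,(u(x)-u(y))(v(x)-v(y))+t^2|v(x)-v(y)|^2,$$
one sees that $\phi(t)$ is a polynomial of degree $2$ in $t$, with coefficients given by absolutely convergent integrals, so differentiation in $t$ commutes with the integrals.

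Term by term, the derivative at $t=0$ picks up
$$\frac{c_{n,s}}{4}\cdot 2\int_{\R^{2n}\setminus(\mathcal C\Omega)^2}\frac{(u(x)-u(y))(v(x)-v(y))}{|x-y|^{n+2s}}\,dx\,dy$$
from the energy part, and $-\int_\Omega fv$ and $-\int_{\R^n\setminus\Omega}gv$ from the two linear parts. Setting $\phi'(0)=0$ and rearranging yields exactly the weak formulation \eqref{weak}, so $u$ is a weak solution in the sense of Definition~\ref{def weak}.

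The only step that needs a careful justification is the integrability required to ensure that $\phi$ is well defined and that the computation of $\phi'(0)$ is legitimate. The energy double integral is finite by the Cauchy--Schwarz inequality applied to the function $E_u$ defined in \eqref{H7}, using that both $E_u$ and $E_v$ lie in $L^2(\R^{2n})$ by the very definition of the norm in \eqref{norm}. The term $\int_\Omega fv$ is controlled by Cauchy--Schwarz from $f\in L^2(\Omega)$ and $v\in L^2(\Omega)$. The only genuinely nonstandard point is the term $\int_{\R^n\setminus\Omega}gv$: it is controlled by
$$\int_{\R^n\setminus\Omega}|g||v|\,dx=\int_{\R^n\setminus\Omega}|g|^{1/2}\cdot|g|^{1/2}|v|\,dx\le \|g\|_{L^1(\R^n\setminus\Omega)}^{1/2}\,\||g|^{1/2}v\|_{L^2(\R^n\setminus\Omega)},$$
which is finite precisely because the $H^s_{\Omega,g}$-norm in \eqref{norm} incorporates the weighted quantity $\||g|^{1/2}v\|_{L^2(\R^n\setminus\Omega)}$. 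The same estimates applied to $u+tv$ show that $\phi(t)$ is a true quadratic polynomial in $t$, so $\phi'(0)$ is indeed given by the expression computed above, and the proof is complete.
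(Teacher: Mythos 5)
Your proposal is correct and follows essentially the same route as the paper: expand $I[u+tv]$ into a quadratic polynomial in $t$, differentiate at $t=0$ to identify $I'[u](v)$ with the left-hand side of the weak formulation, and justify finiteness of each term (Cauchy--Schwarz for the $f$-term and for the energy cross term via the $E_u$-functions, and the splitting $|g|=|g|^{1/2}\cdot|g|^{1/2}$ for the $g$-term). The only cosmetic difference is that you phrase the expansion as ``$\phi(t)$ is a degree-$2$ polynomial'' whereas the paper writes out the constant, linear, and quadratic coefficients explicitly before taking the difference quotient; the substance is identical.
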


\begin{proof}
First of all, we observe that the functional~$I$ is well defined on~$H^s_{\Omega,g}$.
Indeed, if~$u\in H^s_{\Omega,g}$ then
$$ \left|\int_{\Omega}f\,u\right|\le \|f\|_{L^2(\Omega)}\|u\|_{L^2(\Omega)}\le
C\,\|u\|_{H^s_{\Omega,g}},$$
and
$$ \left|\int_{\R^n\setminus\Omega}g\, u\right|\le
\int_{\R^n\setminus\Omega}|g|^{1/2}\,|g|^{1/2}\, |u|
\le \|g\|_{L^1(\R^n\setminus\Omega)}^{1/2}\, \||g|^{1/2} u\|_{L^2(\R^n\setminus\Omega)}\le
C\,\|u\|_{H^s_{\Omega,g}}. $$
Therefore, if~$u\in H^s_{\Omega,g}$ we have that
$$ |I[u]|\le C\|u\|_{H^s_{\Omega,g}}<+\infty.$$

Now, we compute the first variation of~$I$. For this, we take~$|\epsilon|<1$
and~$v\in H^s_{\Omega,g}$. Then the function~$u+\epsilon v\in H^s_{\Omega,g}$, and so
we can compute
\begin{eqnarray*}
I[u+\epsilon v] &=& \frac{c_{n,s}}{4} \int_{\R^{2n}\setminus(\mathcal C\Omega)^2}
\frac{|(u+\epsilon v)(x)-(u+\epsilon v)(y)|^2}{|x-y|^{n+2s}}\,dx\,dy\\
&&\qquad -
\int_\Omega f(u+\epsilon v)- \int_{\R^n\setminus\Omega} g(u+\epsilon v) \\
&=& I(u) +\epsilon\left(\frac{c_{n,s}}{2}
\int_{\R^{2n}\setminus(\mathcal C\Omega)^2}\frac{(u(x)-u(y))(v(x)-v(y))}{|x-y|^{n+2s}}\,dx\,dy-
\int_\Omega f\,v-\int_{\R^n\setminus\Omega} g\,v\right) \\
&&\qquad + \frac{c_{n,s}}{4}\epsilon^2\int_{\R^{2n}
\setminus(\mathcal C\Omega)^2}\frac{|v(x)-v(y)|^2}{|x-y|^{n+2s}}\,dx\,dy.
\end{eqnarray*}
Hence,
\begin{eqnarray*}
&&\lim_{\epsilon\rightarrow 0}\frac{I[u+\epsilon v]-I[u]}{\epsilon} \\
&&\qquad = \frac{c_{n,s}}{2}
\int_{\R^{2n}\setminus(\mathcal C\Omega)^2}\frac{(u(x)-u(y))(v(x)-v(y))}{|x-y|^{n+2s}}\,dx\,dy-
\int_\Omega f\,v-\int_{\R^n\setminus\Omega} g\,v,
\end{eqnarray*}
which means that
$$ I'[u](v) = \frac{c_{n,s}}{2}
\int_{\R^{2n}\setminus(\mathcal C\Omega)^2}\frac{(u(x)-u(y))(v(x)-v(y))}{|x-y|^{n+2s}}\,dx\,dy-
\int_\Omega f\,v-\int_{\R^n\setminus\Omega} g\,v.$$
Therefore, if $u$ is a critical point of~$I$, then $u$ is a weak solution to~\eqref{elliptic-pb},
according to Definition~\ref{def weak}.
\end{proof}

Next result is a sort of maximum principle and it
is auxiliary towards the existence and uniqueness theory
provided in the subsequent Theorem~\ref{EX}.

\begin{lem}\label{max-prin}
Let~$f\in L^2(\Omega)$ and~$g\in L^1(\R^n\setminus\Omega)$.
Let $u$ be any $H^s_{\Omega,0}$ function satisfying in the weak sense
\[\left\{ \begin{array}{rcll}
(-\Delta)^su&=&f&\textrm{in }\Omega \\
\mathcal N_su&=&g&\textrm{in }\R^n\setminus\overline{\Omega},
\end{array}\right.\]
with $f\geq0$ and $g\geq0$.

Then, $u$ is constant.
\end{lem}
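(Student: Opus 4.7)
The plan is to exploit the weak formulation \eqref{weak} by testing against two very specific functions: first a constant, to force the data to vanish, and then the solution itself, to force its Gagliardo seminorm to vanish.

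First, I would observe that the constant function $v \equiv 1$ belongs to $H^s_{\Omega,g}$: indeed $\|1\|_{L^2(\Omega)}^2 = |\Omega| < \infty$ since $\Omega$ is bounded, $\||g|^{1/2}\|_{L^2(\R^n\setminus\Omega)}^2 = \|g\|_{L^1(\R^n\setminus\Omega)} < \infty$ by hypothesis, and the double integral in \eqref{norm} vanishes identically because $v(x)-v(y)\equiv 0$. Plugging $v\equiv 1$ into \eqref{weak} makes the left-hand side vanish, yielding
\[
0 = \int_\Omega f + \int_{\R^n\setminus\Omega} g.
\]
Since both integrands are nonnegative by assumption, this forces $f=0$ a.e.\ in $\Omega$ and $g=0$ a.e.\ in $\R^n\setminus\overline\Omega$.

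Next, I would test \eqref{weak} with $v:=u$ itself. This is legitimate because $u\in H^s_{\Omega,0}$ and, having just shown $g=0$ a.e., the space $H^s_{\Omega,g}$ coincides with $H^s_{\Omega,0}$ (the weighted $L^2$ term in \eqref{norm} is identically zero). The right-hand side of \eqref{weak} vanishes, so
\[
\int_{\R^{2n}\setminus(\mathcal C\Omega)^2}\frac{|u(x)-u(y)|^2}{|x-y|^{n+2s}}\,dx\,dy = 0.
\]
Hence $u(x)=u(y)$ for a.e.\ $(x,y)\in\R^{2n}\setminus(\mathcal C\Omega)^2$. Restricting to $(x,y)\in\Omega\times\Omega$ shows that $u$ is a.e.\ constant on $\Omega$, say $u\equiv c$; then restricting to $(x,y)\in\Omega\times(\R^n\setminus\Omega)$ shows $u(y)=c$ for a.e.\ $y$ outside $\Omega$ as well. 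Thus $u$ is constant a.e.\ on $\R^n$.

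I do not anticipate a serious obstacle: the only minor technical point is verifying that both $v\equiv 1$ and $v=u$ are admissible test functions in the correct space. The first requires only that $g\in L^1$ and $\Omega$ is bounded (both assumed); the second becomes transparent once step one has collapsed the weight $g$ to zero, identifying $H^s_{\Omega,g}$ with the ambient space $H^s_{\Omega,0}$ in which $u$ already lives.
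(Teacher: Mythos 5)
Your proof is correct and follows essentially the same route as the paper's: test \eqref{weak} first with $v\equiv 1$ to force $f=0$ and $g=0$, then with $v=u$ to force the Gagliardo seminorm to vanish. The extra care you take to check that $v\equiv 1$ lies in $H^s_{\Omega,g}$ and that $u$ becomes an admissible test function once $g$ collapses to zero is a reasonable elaboration of what the paper does more tersely.
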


\begin{proof}
First, we observe that the function~$v\equiv 1$ belongs to~$H^s_{\Omega,g}$,
and therefore we can use it as a test function in~\eqref{weak},
obtaining that
\[0\leq \int_\Omega f=-\int_{\R^n\setminus\Omega}g\leq 0.\]
This implies that
\[f=0\quad \textrm{a.e. in}\ \Omega\quad\, \textrm{and}\quad\, g=0\quad \textrm{a.e. in}\ \R^n\setminus\Omega.\]

Therefore, taking $v=u$ as a test function in \eqref{weak}, we deduce that
\[\int_{\R^{2n}\setminus(\mathcal C\Omega)^2}\frac{|u(x)-u(y)|^2}{|x-y|^{n+2s}}\,dx\,dy=0,\]
and hence $u$ must be constant.
\end{proof}

We can now give the following existence and uniqueness result
(we observe that its statement is in complete analogy\footnote{The
only difference with the classical case is that in Theorem~\ref{EX}
it is \emph{not} necessary to suppose that the domain is connected
in order to obtain the uniqueness result.}
with the classical case, see e.g. page~294 in~\cite{Jost}).

\begin{thm}\label{EX}
Let $\Omega\subset\R^n$ be a bounded Lipschitz domain, $f\in L^2(\Omega)$,
and $g\in L^1(\R^n\setminus\Omega)$.
Suppose that there exists a $C^2$ function~$\psi$
such that~$\mathcal{N}_s\psi=g$ in~$\R^n\setminus\overline{\Omega}$.

Then, problem \eqref{elliptic-pb} admits a weak solution in~$H^s_{\Omega,0}$
if and only if
\begin{equation}\label{compatibility}
\int_\Omega f=-\int_{\R^n\setminus\Omega} g.
\end{equation}
Moreover, in case that \eqref{compatibility} holds,
the solution is unique up to an additive constant.
\end{thm}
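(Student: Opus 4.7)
The plan splits into necessity, uniqueness, and existence. Necessity of the compatibility condition~\eqref{compatibility} is immediate upon testing the weak formulation~\eqref{weak} against $v \equiv 1$: the constant function lies in $H^s_{\Omega,g}$ since its double-integral seminorm vanishes and $\|g\|_{L^1(\R^n\setminus\Omega)}$ is finite by hypothesis, so the energy term drops out and one reads off $\int_\Omega f + \int_{\R^n\setminus\Omega} g = 0$. Uniqueness up to an additive constant then follows at once from Lemma~\ref{max-prin} applied to the difference of any two weak solutions, which solves the homogeneous problem with $f=g=0$.

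For existence, I would produce a critical point of the energy $I$ from Proposition~\ref{prop free} by direct minimization on the closed mean-zero subspace
$$ X := \Bigl\{ u \in H^s_{\Omega,g}\,:\, \textstyle\int_\Omega u\, dx = 0 \Bigr\}. $$
The compatibility condition~\eqref{compatibility} is precisely what makes $I$ invariant under $u \mapsto u + c$, so a constrained critical point on $X$ is automatically an unconstrained critical point on the whole of $H^s_{\Omega,g}$ and, by Proposition~\ref{prop free}, a weak solution of~\eqref{elliptic-pb}.

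The analytic core is a nonlocal Poincar\'e-type inequality on $X$: there exists $C > 0$ such that
$$ \|u\|_{L^2(\Omega)}^2 \leq C \int_{\R^{2n}\setminus(\mathcal C\Omega)^2}\frac{|u(x)-u(y)|^2}{|x-y|^{n+2s}}\, dx\, dy \qquad \text{for every } u \in X. $$
I would argue by contradiction. A normalized sequence $u_k \in X$ with $\|u_k\|_{L^2(\Omega)} = 1$ and vanishing double-integral seminorm is bounded in the classical fractional Sobolev space $H^s(\Omega)$, since $\Omega\times\Omega \subset \R^{2n}\setminus(\mathcal C\Omega)^2$ implies that our seminorm dominates the usual Gagliardo seminorm on $\Omega$. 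By the standard compact Rellich-type embedding $H^s(\Omega) \hookrightarrow L^2(\Omega)$ for bounded Lipschitz domains, a subsequence converges in $L^2(\Omega)$ to some $u^\ast$ of zero mean and zero $H^s(\Omega)$-seminorm; hence $u^\ast \equiv 0$, contradicting $\|u^\ast\|_{L^2(\Omega)} = 1$.

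With Poincar\'e in hand, the direct method runs in the usual way: a minimizing sequence is bounded in the ambient norm, admits a weakly convergent subsequence, the convex quadratic part of $I$ is weakly lower semicontinuous, the linear term $\int_\Omega f\,u$ passes to the limit by strong $L^2(\Omega)$ convergence, and the mean-zero constraint survives as a continuous linear condition. The point I expect to require the most care is coercivity of the linear functional $u \mapsto \int_{\R^n\setminus\Omega} g\, u$, which must be controlled by the energy plus absorbable lower-order terms rather than by the full $H^s_{\Omega,g}$-norm. This is exactly where the hypothesis that $g = \mathcal{N}_s\psi$ for some $C^2$ function $\psi$ enters the argument: Lemma~\ref{lema2} applied to the pair $(\psi,u)$ rewrites
$$ \int_{\R^n\setminus\Omega} g\, u = \frac{c_{n,s}}{2}\int_{\R^{2n}\setminus(\mathcal C\Omega)^2}\frac{\bigl(\psi(x)-\psi(y)\bigr)\bigl(u(x)-u(y)\bigr)}{|x-y|^{n+2s}}\, dx\, dy - \int_\Omega u\,(-\Delta)^s\psi, $$
and Cauchy--Schwarz, followed by an $\epsilon$-absorption and a further use of Poincar\'e to convert $\|u\|_{L^2(\Omega)}$-terms back into energy, yields the desired coercive lower bound and closes the existence proof.
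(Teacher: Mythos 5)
Your handling of necessity (test $v\equiv 1$) and uniqueness (via Lemma~\ref{max-prin}) matches the paper's, but your existence argument takes a genuinely different route. The paper builds the regularized solution operator $T_o h := v$ solving $(-\Delta)^s v + v = h$ in $\Omega$ with $\mathcal N_s v = 0$ outside, produced directly by the Riesz representation theorem in the shifted inner product of $H^s_{\Omega,0}$ (the zeroth-order term $\int_\Omega v\varphi$ makes the form coercive without any Poincar\'e inequality), shows the restriction $T := T_o|_\Omega$ is compact and self-adjoint on $L^2(\Omega)$, identifies ${\rm Ker}(Id-T)$ as the constants via Lemma~\ref{max-prin}, and reads off both directions of the compatibility condition from the Fredholm alternative. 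You instead minimize $I$ directly on a mean-zero subspace, and the analytic heart is the nonlocal Poincar\'e inequality; the paper does prove exactly that inequality (Lemma~\ref{poincare}), but invokes it only later, for Theorem~\ref{EVAL}, not here. Your treatment of nonzero $g$ is an ``in-place'' rewrite via $\psi$ and Lemma~\ref{lema2}, whereas the paper makes the substitution $\bar u = u - \psi$ once and then only ever works with $g\equiv 0$; the two are equivalent, but the substitution is tidier because it eliminates $g$ entirely.

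There is, however, a gap in your ``bounded in the ambient norm'' step. You minimize over $X\subset H^s_{\Omega,g}$, but the coercivity you obtain — Poincar\'e together with the $\psi$-rewrite of $\int_{\R^n\setminus\Omega} g\,u$ — controls only the Gagliardo-type seminorm $\bigl(\int_{\R^{2n}\setminus(\mathcal C\Omega)^2}|u_k(x)-u_k(y)|^2|x-y|^{-n-2s}\,dx\,dy\bigr)^{1/2}$ and, through Poincar\'e on mean-zero functions, $\|u_k\|_{L^2(\Omega)}$. It does not bound the third summand $\||g|^{1/2} u_k\|_{L^2(\R^n\setminus\Omega)}^2$ of the $H^s_{\Omega,g}$ norm, so boundedness of the minimizing sequence in $H^s_{\Omega,g}$ is not established and weak compactness there is not justified. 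The simplest repair is the one the paper already uses: set $\bar u := u - \psi$ first, so $g$ disappears and you minimize $\bar I[\bar u]=\frac{c_{n,s}}{4}\int_{\R^{2n}\setminus(\mathcal C\Omega)^2}\frac{|\bar u(x)-\bar u(y)|^2}{|x-y|^{n+2s}}\,dx\,dy-\int_\Omega \bar f\,\bar u$ over the mean-zero subspace of $H^s_{\Omega,0}$. Alternatively, keep your in-place rewrite but carry out the minimization in $H^s_{\Omega,0}$ rather than $H^s_{\Omega,g}$, observing that Definition~\ref{def weak} only requires the solution to lie in $H^s_{\Omega,0}$ — it is the test functions, not the solution, that must belong to $H^s_{\Omega,g}$. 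With either repair the variational argument closes and yields a correct proof.
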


\begin{proof}
\emph{Case 1}. We do first the case $g\equiv0$, i.e., with homogeneous nonlocal
Neumann conditions. We also assume that~$f\not\equiv 0$, otherwise there is nothing
to prove.

Given~$h\in L^2(\Omega)$, we look for a solution $v\in H^s_{\Omega,g}$ of the problem
\begin{equation}\label{aux}
\int_{\Omega}v\, \varphi +\int_{\R^{2n}\setminus(\mathcal C\Omega)^2}
\frac{(v(x)-v(y))(\varphi(x)-\varphi(y))}{|x-y|^{n+2s}}\,dx\,dy = \int_{\Omega} h\, \varphi,
\end{equation}
for any~$\varphi\in H^s_{\Omega,g}$,
with homogeneous Neumann conditions $\mathcal N_sv=0$ in $\R^n\setminus\overline{\Omega}$.

We consider the functional~$\mathcal{F}:H^s_{\Omega,g}\rightarrow\R$ defined as
$$ \mathcal{F}(\varphi):= \int_{\Omega}h\,\varphi \ {\mbox{ for any }}\varphi\in H^s_{\Omega,g}.$$
It is easy to see that~$\mathcal{F}$ is linear. Moreover, it is continuous on~$H^s_{\Omega,g}$:
$$\left|\mathcal{F}(\varphi)\right|\le \int_{\Omega}|h|\,|\varphi|\le \|h\|_{L^2(\Omega)}\,
\|\varphi\|_{L^2(\Omega)}\le \|h\|_{L^2(\Omega)}\, \|\varphi\|_{H^s_{\Omega,g}}. $$
Therefore, from the Riesz representation theorem it follows that problem~\eqref{aux}
admits a unique solution~$v\in H^s_{\Omega,g}$ for any given~$h\in L^2(\Omega)$.

Furthermore, taking~$\varphi:=v$ in~\eqref{aux}, one obtain that
\begin{equation}\label{aux2}
\|v\|_{H^s(\Omega)}\leq C\|h\|_{L^2(\Omega)}.
\end{equation}

Now, we define the operator $T_o :L^2(\Omega)\longrightarrow H^s_{\Omega,g}$
as $T_o h=v$.
We also define by~$T$ the restriction operator in~$\Omega$, that is
$$ T h = T_o h\big|_\Omega.$$
That is, the function~$T_o h$ is defined in the whole of~$\R^n$,
then we take~$T h$ to be its restriction in~$\Omega$. In this way,
we see that~$T:L^2(\Omega)\longrightarrow
L^2(\Omega)$.

We have that~$T$ is compact. Indeed, we take a sequence~$\{h_k\}_{k\in\N}$
bounded in~$L^2(\Omega)$. Hence, from~\eqref{aux2} we deduce that the sequence
of~$T h_k$ is bounded in~$H^s(\Omega)$, which is compactly embedded in~$L^2(\Omega)$
(see e.g. \cite{DPV}). Therefore, there exists a subsequence that converges in~$L^2(\Omega)$.

Now, we show that~$T$ is self-adjoint. For this, to avoid any smoothness
issue on the test function, we will proceed by approximation.
We take~$h_1, h_2\in C^{\infty}_0(\Omega)$
and we use the weak formulation in~\eqref{aux} to say that, for every~$\varphi, \phi\in H^s_{\Omega,g}$,
we have
\begin{equation}\label{aux3}
\int_{\Omega}T_o h_1\, \varphi +\int_{\R^{2n}\setminus(\mathcal C\Omega)^2}
\frac{(T_o h_1(x)-T_o h_1(y))(\varphi(x)-\varphi(y))}{|x-y|^{n+2s}}\,dx\,dy = \int_{\Omega} h_1\,\varphi,
\end{equation}
and
\begin{equation}\label{aux4}
\int_{\Omega}T_o h_2\, \phi +\int_{\R^{2n}\setminus(\mathcal C\Omega)^2}
\frac{(T_o h_2(x)-T_o h_2(y))(\phi(x)-\phi(y))}{|x-y|^{n+2s}}\,dx\,dy = \int_{\Omega} h_2\, \phi,
\end{equation}
Now we take $\varphi:=T_o h_2$ and~$\phi:=T_o h_1$ in~\eqref{aux3} and~\eqref{aux4} respectively
and we obtain that
$$ \int_{\Omega} h_1\,T_o h_2 = \int_{\Omega} T_o h_1\,h_2$$
for any $h_1,h_2\in C^{\infty}_0(\Omega)$. Accordingly, since~$T_o h_1=Th_1$
and~$T_o h_2=Th_2$ in~$\Omega$, we conclude that
\begin{equation}\label{aux5}
\int_{\Omega} h_1\,Th_2 = \int_{\Omega} Th_1\,h_2
\end{equation}
for any $h_1,h_2\in C^{\infty}_0(\Omega)$.
If~$h_1,h_2\in L^2(\Omega)$, there exist sequences of functions
in~$C^{\infty}_0(\Omega)$, say~$h_{1,k}$ and~$h_{2,k}$, such that
$h_{1,k}\rightarrow h_1$ and~$h_{2,k}\rightarrow h_2$ in~$L^2(\Omega)$ as~$k\rightarrow+\infty$.
From~\eqref{aux5} we have that
\begin{equation}\label{aux6}
\int_{\Omega} h_{1,k}\,Th_{2,k} = \int_{\Omega} Th_{1,k}\,h_{2,k}.
\end{equation}
Moreover, from~\eqref{aux2} we deduce that~$Th_{1,k}\rightarrow Th_1$
and~$Th_{2,k}\rightarrow Th_2$ in~$H^s(\Omega)$ as~$k\rightarrow+\infty$, and so
$$ \int_{\Omega} h_{1,k}\,Th_{2,k}\rightarrow \int_{\Omega} h_1\,Th_2 \ {\mbox{ as }} k\rightarrow+\infty $$
and
$$ \int_{\Omega} Th_{1,k}\,h_{2,k} \rightarrow \int_{\Omega} Th_1\,h_2 \ {\mbox{ as }} k\rightarrow+\infty. $$
The last two formulas and~\eqref{aux6} imply that
\begin{equation}\label{end-aux3} \int_{\Omega} h_1\,Th_2 = \int_{\Omega} Th_1\,h_2 \ {\mbox{ for any }}h_1,h_2\in L^2(\Omega),\end{equation}
which says that~$T$ is self-adjoint.

Now we prove that
\begin{equation}\label{KER}
{\mbox{${Ker}(Id-T)$ consists of constant functions.}}
\end{equation}
First of all, we check that the constants are in~${Ker}(Id-T)$.
We take a function constantly equal to~$c$ and we observe that~$(-\Delta)^s c=0$
in~$\Omega$ (hence~$(-\Delta)^s c+c = c$)
and~${\mathcal{N}}_s c=0$ in~$\R^n\setminus\Omega$. This shows that~$T_o c=c$
in~$\R^n$, and so~$Tc =c$ in~$\Omega$, which implies that~$c\in
{Ker}(Id-T)$.
Viceversa, now we show that if~$v\in{Ker}(Id-T)\subseteq L^2(\Omega)$,
then~$v$ is constant.
For this, we consider~$T_o v\in H^s_{\Omega,g}$. By construction
\begin{equation} \label{Tzer}
(-\Delta)^s (T_ov)+ (T_o v)=v \
{\mbox{ in $\Omega$,}}\end{equation}
in the weak sense, and
\begin{equation}\label{Tz}
{\mbox{${\mathcal{N}}_s (T_o v)=0$ in~$\R^n
\setminus\Omega$.}}\end{equation} On the other hand,
since~$v\in{Ker}(Id-T)$, we have that
\begin{equation}\label{T991}
{\mbox{$v=Tv=T_o v$ in~$\Omega$.}}\end{equation}
Hence, by~\eqref{Tzer}, we have that
$$ (-\Delta)^s (T_ov)=0\
{\mbox{ in $\Omega$.}} $$
Using this, \eqref{Tz} and Lemma \ref{max-prin}, we obtain that~$T_o v$
is constant. Thus, by~\eqref{T991}, we obtain that~$v$ is constant in~$\Omega$
and this completes the proof of~\eqref{KER}.

{F}rom~\eqref{KER} and the Fredholm Alternative, we conclude that
$$ {Im} (Id-T) ={Ker}(Id-T)^\perp
=\{ {\mbox{constant functions}}\}^\perp,$$
where the orthogonality notion is in~$L^2(\Omega)$. More explicitly,
\begin{equation}\label{IM}
{Im} (Id-T)=\left\{ f\in L^2(\Omega)
{\mbox{ s.t. }}\int_\Omega f=0
\right\}.\end{equation}
Now, let us take~$f$ such that~$\int_\Omega f=0$.
By~\eqref{IM}, we know that there exists~$w\in L^2(\Omega)$ such that~$f=w-Tw$.
Let us define $u:=T_o w$. By construction,
we have that~${\mathcal{N}}_s u=0$
in~$\R^n\setminus\Omega$, and that
$$ (-\Delta)^s (T_ow)+(T_ow)=w \ {\mbox{ in }}\Omega.$$
Consequently, in~$\Omega$,
$$ f=w-Tw =w-T_o w = (-\Delta)^s (T_ow) =(-\Delta)^s u,$$
and we found the desired solution in this case.

Viceversa, if we have a solution~$u\in H^s_{\Omega,g}$ of~$(-\Delta)^s u=f$
in~$\Omega$ with~${\mathcal{N}}_s u=0$
in~$\R^n\setminus\Omega$, we set~$w:=f+u$ and we observe that
$$ (-\Delta)^s u+u=f+u=w \ {\mbox{ in }}\Omega.$$
Accordingly, we have that~$u=T_o w$ in~$\R^n$, hence~$u=Tw$ in~$\Omega$.
This says that
$$ (Id-T) w = w -u = f \ {\mbox{ in }}\Omega$$
and so~$f\in {Im} (Id-T)$. Thus, by~\eqref{IM}, we obtain that~$\int_\Omega f=0$.

This establishes the validity of Theorem~\ref{EX} when~$g\equiv0$.
\medskip

\emph{Case 2}. Let us now consider the nonhomogeneous case \eqref{elliptic-pb}.
By the hypotheses, there exists a $C^2$ function~$\psi$ satisfying $\mathcal N_s\psi=g$ in $\R^n\setminus\overline{\Omega}$.

Let $\bar u=u-\psi$.
Then, $\bar u$ solves
\[\left\{ \begin{array}{rcll}
(-\Delta)^s\bar u&=&\bar f&\textrm{in }\Omega \\
\mathcal N_su&=&0&\textrm{in }\R^n\setminus\overline{\Omega},
\end{array}\right.\]
with
\[\bar f=f-(-\Delta)^s\psi.\]
Then, as we already proved, this problem admits a solution if and only if $\int_\Omega \bar f=0$, i.e., if
\begin{equation}\label{E}
0=\int_\Omega \bar f=\int_\Omega f-\int_\Omega (-\Delta)^s \psi.\end{equation}
But, by Lemma~\ref{lema1}, we have that
\[\int_\Omega(-\Delta)^s\psi=-\int_{\R^n\setminus\Omega}
\mathcal N_s\psi=-\int_{\R^n\setminus\Omega}g.\]
{F}rom this and~\eqref{E} we conclude that
a solution exists if and only if \eqref{compatibility} holds.

Finally, the solution is unique up to an additive constant
thanks to Lemma~\ref{max-prin}.
\end{proof}

\subsection{Eigenvalues and eigenfunctions}
Here we discuss the spectral properties of problem \eqref{eq-intro}.
For it, we will need the following classical tool.

\begin{lem}[Poincar\'e inequality]\label{poincare}
Let $\Omega\subset \R^n$ be any bounded Lipschitz domain, and let $s\in(0,1)$.
Then, for all functions $u\in H^s(\Omega)$, we have
\[\int_\Omega \left|u-\ave_\Omega u\right|^2dx\leq C_\Omega\int_{\Omega}\int_\Omega \frac{|u(x)-u(y)|^2}{|x-y|^{n+2s}}\,dx\,dy,\]
where the constant $C_\Omega>0$ depends only on $\Omega$ and $s$.
\end{lem}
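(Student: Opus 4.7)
The plan is to prove this by a standard compactness-and-contradiction argument (of Rellich-type), exploiting the compact embedding $H^s(\Omega)\hookrightarrow L^2(\Omega)$ on bounded Lipschitz domains, which is known (see for instance \cite{DPV}, already used in the proof of Theorem~\ref{EX} above).

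More concretely, I would argue by contradiction. Suppose the inequality fails for every constant $C>0$. Then one can find a sequence $u_k\in H^s(\Omega)$ such that
\[
\int_\Omega\left|u_k-\ave_\Omega u_k\right|^2 dx \;>\; k\,\int_\Omega\int_\Omega\frac{|u_k(x)-u_k(y)|^2}{|x-y|^{n+2s}}\,dx\,dy.
\]
Since both sides are invariant under adding a constant to $u_k$ and under scalar multiplication, I would renormalize: replace $u_k$ by $(u_k-\ave_\Omega u_k)/\|u_k-\ave_\Omega u_k\|_{L^2(\Omega)}$. With this choice one has
\[
\int_\Omega u_k\,dx=0,\qquad \|u_k\|_{L^2(\Omega)}=1,\qquad \int_\Omega\int_\Omega \frac{|u_k(x)-u_k(y)|^2}{|x-y|^{n+2s}}\,dx\,dy < \frac{1}{k}.
\]
In particular $\{u_k\}$ is bounded in $H^s(\Omega)$.

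The next step is to use the compactness of the embedding $H^s(\Omega)\hookrightarrow L^2(\Omega)$ (valid since $\Omega$ is bounded and Lipschitz): up to extracting a subsequence, $u_k\to u$ strongly in $L^2(\Omega)$ and almost everywhere in $\Omega$. By Fatou's Lemma applied to the Gagliardo double integral,
\[
\int_\Omega\int_\Omega\frac{|u(x)-u(y)|^2}{|x-y|^{n+2s}}\,dx\,dy \;\le\; \liminf_{k\to+\infty}\int_\Omega\int_\Omega\frac{|u_k(x)-u_k(y)|^2}{|x-y|^{n+2s}}\,dx\,dy\;=\;0.
\]
Hence the integrand vanishes a.e., which forces $u$ to be (a.e.) constant on $\Omega$ (since $\Omega$ is Lipschitz, hence connected on each component; if $\Omega$ is disconnected one applies the argument on each component separately, which only produces a different but still admissible constant $C_\Omega$). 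Passing to the limit in $\int_\Omega u_k=0$ and $\|u_k\|_{L^2(\Omega)}=1$ gives $\int_\Omega u=0$ and $\|u\|_{L^2(\Omega)}=1$; the first relation combined with $u\equiv c$ forces $c=0$, contradicting $\|u\|_{L^2(\Omega)}=1$.

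The only delicate ingredient is the compact embedding $H^s(\Omega)\hookrightarrow L^2(\Omega)$ for a bounded Lipschitz domain; the rest is bookkeeping. In the disconnected case one should treat each connected component on its own and take $C_\Omega$ to be the maximum of the finitely many component constants, which is why the hypothesis that $\Omega$ is bounded Lipschitz (hence a finite union of connected Lipschitz pieces) is used.
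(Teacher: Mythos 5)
Your proof is correct and follows essentially the same compactness-and-contradiction argument as the paper: normalize so that $u_k$ has zero average and unit $L^2(\Omega)$ norm while the Gagliardo seminorm tends to zero, use the compact embedding $H^s(\Omega)\hookrightarrow L^2(\Omega)$ (together with Fatou, which the paper leaves implicit) to extract a limit with zero seminorm, zero average, and unit norm, and derive a contradiction. One minor remark: your caveat about disconnected $\Omega$ is unnecessary, since the vanishing of $\int_\Omega\int_\Omega |u(x)-u(y)|^2/|x-y|^{n+2s}\,dx\,dy$ forces $u(x)=u(y)$ for a.e.\ $(x,y)\in\Omega\times\Omega$, which already makes $u$ constant on all of $\Omega$ whether or not $\Omega$ is connected.
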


\begin{proof} We give the details for the facility of the reader.
We argue by contradiction and we assume that the inequality does not hold.
Then, there exists a sequence of functions $u_k\in H^s(\Omega)$ satisfying
\begin{equation}\label{poinc1}
\ave_\Omega u_k=0,\qquad \|u_k\|_{L^2(\Omega)}=1,
\end{equation}
and
\begin{equation}\label{poinc2}
\int_{\Omega}\int_\Omega \frac{|u_k(x)-u_k(y)|^2}{|x-y|^{n+2s}}\,dx\,dy<\frac1k.
\end{equation}
In particular, the functions $\{u_k\}_{k\geq1}$ are bounded in $H^s(\Omega)$.

Using now that the embedding $H^s(\Omega)\subset L^2(\Omega)$ is compact (see e.g. \cite{DPV}), it follows that a subsequence $\{u_{k_j}\}_{j\geq1}$ converges to a function $\bar u\in L^2(\Omega)$, i.e.,
\[u_{k_j}\longrightarrow \bar u\quad\ \textrm{in}\ L^2(\Omega).\]
Moreover, we deduce from \eqref{poinc1} that
\begin{equation}\label{poinc3}
\ave_\Omega \bar u=0,\qquad\textrm{and}\qquad \|\bar u\|_{L^2(\Omega)}=1.
\end{equation}

On the other hand, \eqref{poinc2} implies that
\[\int_{\Omega}\int_\Omega \frac{|\bar u(x)-\bar u(y)|^2}{|x-y|^{n+2s}}\,dx\,dy=0.\]
Thus, $\bar u$ is constant in $\Omega$, and this contradicts \eqref{poinc3}.
\end{proof}

We finally give the description of the eigenvalues of $(-\Delta)^s$ with zero Neumann boundary conditions.

\begin{thm}\label{EVAL}
Let $\Omega\subset\R^n$ be a bounded Lipschitz domain.
Then, there exist a sequence of nonnegative values
\[0=\lambda_1<\lambda_2\leq \lambda_3\leq\cdots,\] and
a sequence of functions~$u_i:\R^n\to\R$ such that
\[\left\{ \begin{array}{rcll}
(-\Delta)^su_i(x)&=&\lambda_i u_i(x)&\textrm{for any }x\in\Omega \\ \mathcal
N_su_i(x)&=&0&\textrm{for any }x\in\R^n\setminus\overline{\Omega}.
\end{array}\right.\]
Also, the functions~$u_i$ (when restricted to~$\Omega$)
provide a complete orthogonal system in~$L^2(\Omega)$.
\end{thm}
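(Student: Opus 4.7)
The plan is to apply the spectral theorem for compact self-adjoint operators to the operator $T:L^2(\Omega)\to L^2(\Omega)$ already built in the proof of Theorem~\ref{EX}. Recall that for $w\in L^2(\Omega)$, one defines $T_o w\in H^s_{\Omega,0}$ as the unique solution (via the Riesz representation theorem on $H^s_{\Omega,0}$) of $(-\Delta)^s v + v = w$ in $\Omega$ with $\mathcal{N}_s v = 0$ in $\R^n\setminus\overline\Omega$, and then $Tw:=(T_o w)|_\Omega$. It was already established that $T$ is compact, via the compact embedding $H^s(\Omega)\hookrightarrow L^2(\Omega)$, and self-adjoint, see~\eqref{end-aux3}.

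I would then verify that $T$ is positive and injective, with largest eigenvalue equal to $1$ and simple. Testing the weak formulation~\eqref{aux} against $\varphi=T_o w$ gives
\[\|Tw\|_{L^2(\Omega)}^2 + \mathcal{E}(T_o w,T_o w)=\int_\Omega w\,Tw,\]
where $\mathcal{E}$ denotes the Gagliardo-type bilinear form on $H^s_{\Omega,0}$ appearing in~\eqref{aux}. Hence $\langle Tw,w\rangle_{L^2(\Omega)}\geq 0$; if $w$ is an $L^2(\Omega)$-normalized eigenfunction with eigenvalue $\mu$, this identity reads $\mu = \mu^2+\mathcal{E}(T_o w,T_o w)$, so $\mu\in[0,1]$, with $\mu=1$ precisely when $T_o w$ is constant, i.e. when $w$ is constant. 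Injectivity (so that $\mu\neq 0$) follows from the argument used to prove~\eqref{KER}: if $Tw=0$ then $T_o w\equiv 0$ in $\Omega$, and then the prescription $\mathcal{N}_s (T_o w)=0$ in $\R^n\setminus\overline\Omega$ forces $T_o w\equiv 0$ outside as well, so $w=0$.

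The spectral theorem then furnishes a decreasing sequence of positive eigenvalues $1=\mu_1>\mu_2\geq\mu_3\geq\cdots\searrow 0$ together with $L^2(\Omega)$-orthonormal eigenfunctions $\{w_k\}_{k\geq 1}$ forming a complete basis of $L^2(\Omega)$. Setting $u_k:=T_o w_k\in H^s_{\Omega,0}$, we have $u_k|_\Omega=\mu_k w_k$ and $(-\Delta)^s u_k + u_k = w_k$ in $\Omega$, which rearranges to $(-\Delta)^s u_k = \lambda_k u_k$ in $\Omega$ with $\lambda_k:=(1-\mu_k)/\mu_k\geq 0$, while by construction $\mathcal{N}_s u_k = 0$ in $\R^n\setminus\overline\Omega$. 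The chain $0=\lambda_1<\lambda_2\leq\lambda_3\leq\cdots$ and the property that $\{u_k|_\Omega\}=\{\mu_k w_k\}$ is a complete orthogonal system in $L^2(\Omega)$ then transfer directly from the corresponding properties of $\{\mu_k,w_k\}$, using that each $\mu_k$ is positive.

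The main technical point is the verification that the top eigenvalue $\mu_1=1$ has a one-dimensional eigenspace consisting of constants, since this is what guarantees the strict inequality $\lambda_1<\lambda_2$; but this is exactly the content of Lemma~\ref{max-prin} applied with $f\equiv 0$ and $g\equiv 0$. All the remaining assertions are formal consequences of the compactness, self-adjointness and positivity of $T$, together with the relation $u_k=T_o w_k$ between the eigenfunctions of $T$ and those of our Neumann problem.
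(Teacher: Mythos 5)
Your proof is correct, and it takes a genuinely different (and arguably cleaner) route than the paper's own argument. The paper restricts attention to the zero-average subspace $L^2_0(\Omega)$, uses the solvability from Theorem~\ref{EX} to define the solution operator $T_o f=u$ for $(-\Delta)^s u=f$ with $\mathcal{N}_s u=0$, obtains eigenvalues $\mu_i$ of the restriction $T$ on $L^2_0(\Omega)$ with $\lambda_i=\mu_i^{-1}$, and then must separately append $\lambda_1=0$ with eigenfunction $u_1\equiv 1$ and re-verify orthogonality and completeness of the enlarged system in all of $L^2(\Omega)$. You instead reuse the resolvent-type operator $\big((-\Delta)^s+\mathrm{Id}\big)^{-1}$ from the proof of Theorem~\ref{EX}, which is defined on the whole of $L^2(\Omega)$; the shift by the identity removes the kernel issue that the paper handles by hand, so the spectral theorem delivers the complete orthogonal basis and all the nonnegative $\lambda_k=(1-\mu_k)/\mu_k$ at once, with $\lambda_1=0<\lambda_2$ coming from the simplicity of $\mu_1=1$, which is exactly Lemma~\ref{max-prin} as you note. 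Both approaches are standard spectral reductions; yours is shorter and avoids the final by-hand completeness check. One small attribution slip: the injectivity of $T$ is not really ``the argument used to prove~\eqref{KER}'' (that argument characterizes $\mathrm{Ker}(\mathrm{Id}-T)$), but rather mirrors the argument establishing~\eqref{NE0} in the paper's proof of Theorem~\ref{EVAL}; the actual argument you spell out, that $Tw=0$ forces $T_ow\equiv 0$ on $\Omega$ and then on $\R^n\setminus\overline\Omega$ by the Neumann prescription and hence $w=0$, is correct and self-contained.
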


\begin{proof}
We define
$$ L^2_0(\Omega):=\left\{u\in L^2(\Omega) \ :\ \int_{\Omega}u =0\right\}.$$
Let the operator~$T_o$
be defined by $T_of=u$, where $u$ is the
unique solution of
\[\left\{ \begin{array}{rcll}
(-\Delta)^su&=& f&\textrm{in }\Omega \\
\mathcal N_su&=&0&\textrm{in }\R^n\setminus\overline{\Omega}
\end{array}\right.\]
according to Definition~\ref{def weak}.
We remark that the existence and uniqueness of such solution
is a consequence of the fact that~$f\in L^2_0(\Omega)$ and Theorem~\ref{EX}.
We also define~$T$ to be the restriction of~$T_o$ in~$\Omega$, that is
$$ T f =T_o f\Big|_\Omega.$$
In this way~$T:L^2_0(\Omega)\longrightarrow L^2_0(\Omega)$.

Also, we claim that the operator $T$ is compact and self-adjoint.

We first show that~$T$ is compact.
Indeed, taking $v=u=T_o f$ in the weak formulation of the problem \eqref{weak}, we obtain
\begin{equation}\label{F}
\frac{c_{n,s}}{2}\int_{\R^{2n}\setminus (\mathcal C\Omega)^2}\frac{|u(x)-u(y)|^2}{|x-y|^{n+2s}}
\,dx\,dy \leq \|f\|_{L^2(\Omega)}\|u\|_{L^2(\Omega)}.
\end{equation}
Now, using the Poincar\'e inequality in Lemma \ref{poincare} (recall that $\ave_\Omega u=0$),
we deduce that
\begin{equation}\label{L2}
\|u\|_{L^2(\Omega)}\leq C \left(\int_{\Omega\times\Omega}
\frac{|u(x)-u(y)|^2}{|x-y|^{n+2s}}\,dx\,dy\right)^{1/2}.
\end{equation}
This and~\eqref{F} give that
\begin{equation}\label{Hs bound}
\left(\int_{\Omega\times\Omega}
\frac{|u(x)-u(y)|^2}{|x-y|^{n+2s}}\,dx\,dy\right)^{1/2}\leq C\|f\|_{L^2(\Omega)}.
\end{equation}
Now, we take a sequence~$\{f_k\}_{k\in\N}$ bounded in~$L^2(\Omega)$.
{F}rom~\eqref{L2} and~\eqref{Hs bound} we obtain that~$u_k=Tf_k$ is bounded in~$H^s(\Omega)$.
Hence, since the embedding $H^s(\Omega)\subset L^2(\Omega)$ is compact,
there exists a subsequence that converges in~$L^2(\Omega)$.
Therefore, $T$ is compact.

Now we show that~$T$ is self-adjoint in~$L^2_0(\Omega)$.
The proof is very similar to the one in \eqref{aux3}--\eqref{end-aux3},
but for the facility of the reader we give it in full detail
(the reader who is not interested can jump directly to \eqref{NE0}).
To show self-adjointness, we take~$f_1$ and~$f_2$ in~$C^{\infty}_0(\Omega)$,
with~$\ave_\Omega f_1=\ave_\Omega f_2=0$.
Then from the weak formulation in~\eqref{weak} we have that,
for every~$v,w\in H^s_{\Omega,g}$,
\begin{equation}\label{we1}
\frac{c_{n,s}}{2}\int_{\R^{2n}\setminus(\mathcal C\Omega)^2}
\frac{(T_o f_1(x)-T_o f_1(y))(v(x)-v(y))}{|x-y|^{n+2s}}\,dx\,dy = \int_{\Omega}f_1\, v
\end{equation}
and
\begin{equation}\label{we2}
\frac{c_{n,s}}{2}\int_{\R^{2n}\setminus(\mathcal C\Omega)^2}
\frac{(T_o f_2(x)-T_o f_2(y))(w(x)-w(y))}{|x-y|^{n+2s}}\,dx\,dy = \int_{\Omega}f_2\, w.
\end{equation}
We observe that we can take~$v:=T_o f_2$ in~\eqref{we1} and~$w:=T_o f_1$ in~\eqref{we2}
(and recall that~$T_o f_i=Tf_i$ in~$\Omega$),
obtaining that
\begin{equation}\label{we3}
\int_{\Omega}f_1\, Tf_2 = \int_{\Omega}f_2\, Tf_1, \ {\mbox{ for any }}
f_1,f_2\in C^{\infty}_0(\Omega).
\end{equation}
Now, if $f_1, f_2\in L^2_0(\Omega)$ we can find sequences of
functions~$f_{1,k}, f_{2,k}\in C^{\infty}_0(\Omega)$ such that~$f_{1,k}\rightarrow f_1$
and~$f_{2,k}\rightarrow f_2$ in~$L^2(\Omega)$ as~$k\rightarrow+\infty$. Therefore, from~\eqref{we3}, we have
\begin{equation}\label{we4}
\int_{\Omega}f_{1,k}\, Tf_{2,k} = \int_{\Omega}f_{2,k}\, Tf_{1,k}.
\end{equation}
We notice that, thanks to~\eqref{L2} and~\eqref{Hs bound},
$Tf_{1,k}\rightarrow Tf_1$ and $Tf_{2,k}\rightarrow Tf_2$ in~$L^{2}(\Omega)$
as~$k\rightarrow+\infty$,
and therefore, from~\eqref{we4}, we obtain that
$$ \int_{\Omega}f_{1}\, Tf_{2} = \int_{\Omega}f_{2}\, Tf_{1}, $$
thus proving that~$T$ is self-adjoint in $L^2_0(\Omega)$.

Thus, by the spectral theorem there exists a sequence of eigenvalues
$\{\mu_i\}_{i\ge2}$ of~$T$,
and its corresponding eigenfunctions~$\{e_i\}_{i\ge2}$ are a complete orthogonal system in $L^2_0(\Omega)$.

We remark that
\begin{equation}\label{NE0}
\mu_i\ne0.\end{equation}
Indeed, suppose by contradiction that~$\mu_i=0$. Then
\begin{equation}\label{NE1}
0=\mu_ie_i = T e_i =T_o e_i \ {\mbox{ in }}\Omega.\end{equation}
By construction, ${\mathcal{N}}_s (T_o e_i)=0$ in~$\R^n\setminus\overline\Omega$.
This and~\eqref{NE1} give that
$$ T_o e_i (x) =
\frac{\displaystyle\int_\Omega\frac{T_o e_i (y)}{|x-y|^{n+2s}}\,dy}{
\displaystyle\int_\Omega\frac{dy}{|x-y|^{n+2s}}}=0
\ {\mbox{ in }} \R^n\setminus\overline\Omega.$$
Using this and~\eqref{NE1} once again we conclude that~$T_o e_i\equiv0$
a.e. in~$\R^n$. Therefore
$$ 0=(-\Delta)^s (T_o e_i) = e_i \ {\mbox{ in }}\Omega,$$
which gives that~$e_i\equiv0$ in~$\Omega$, hence it is not an eigenfunction.
This establishes~\eqref{NE0}.

{F}rom~\eqref{NE0}, we can define
$$ \lambda_i := \mu_i^{-1}.$$
We also define~$u_i:=T_o e_i$ and we claim that~$u_2, u_3,\cdots$
is the desired system of eigenfunctions,
with corresponding eigenvalues~$\lambda_2,\lambda_3,\cdots$

Indeed,
\begin{equation}\label{tfygdd32uih}
u_i=T_o e_i=Te_i=\mu_i e_i \ {\mbox{ in }}\Omega,\end{equation}
hence the orthogonality and completeness properties of~$u_2, u_3,\cdots$
in~$L^2_0(\Omega)$ follow from those of~$e_2, e_3,\cdots$

Furthermore, in~$\Omega$, we have that~$(-\Delta)^s u_i=
(-\Delta)^s(T_oe_i)=e_i = \lambda_i u_i$,
where~\eqref{tfygdd32uih}
was used in the last step, and this proves the desired
spectral property.

Now, we notice that
\begin{equation}\label{ge0}
{\mbox{$\lambda_i>0$ for any~$i\ge2$.}}\end{equation}
Indeed, its corresponding eigenfunction~$u_i$ solves
\begin{equation}\label{u2}
\left\{ \begin{array}{rcll}
(-\Delta)^su_i &=&\lambda_i u_i &\textrm{in }\Omega \\
\mathcal N_su_i &=&0&\textrm{in }\R^n\setminus\overline{\Omega},
\end{array}\right.
\end{equation}
Then, if we take~$u_i$ as a test function in the weak formulation of~\eqref{u2}, we obtain that
$$ \frac{c_{n,s}}{2}\int_{\R^{2n}\setminus(\mathcal C\Omega)^2}
\frac{|u_i(x)-u_i(y)|^2}{|x-y|^{n+2s}}\,dx\,dy = \lambda_i\int_{\Omega}u_i^2,$$
which implies that~$\lambda_i\ge 0$. Now, suppose by contradiction that~$\lambda_i=0$.
Then, from Lemma~\ref{max-prin} we have that~$u_i$ is constant.
On the other hand, we know that~$u_i\in L^2_0(\Omega)$, and this implies that~$u_i\equiv0$,
which is a contradiction since~$u_i$ is an eigenfunction. This establishes~\eqref{ge0}.

{F}rom~\eqref{ge0}, up to reordering them, we can suppose that~$0<\lambda_2\le\lambda_3\le\cdots$;
now, we notice that $\lambda_1:=0$ is an eigenvalue,
with eigenfunction $u_1:=1$,
thanks to Lemma~\ref{max-prin}. Therefore, we have a sequence of eigenvalues
$0=\lambda_1<\lambda_2\leq \lambda_3\leq\cdots$,
and its corresponding eigenfunctions are a complete orthogonal system in $L^2(\Omega)$.
To check the latter statement, we argue as follows:
first of all, the system~$\{e_i\}_{i\ge1}$
is orthogonal in $L^2(\Omega)$, since we already know
that the system~$\{e_i\}_{i\ge2}$ is orthogonal, and each $e_i$ is orthogonal to $e_1$
for any $i\ge2$,
because $e_i\in L^2_0(\Omega)$ and $e_1\equiv1$.
To check that the system~$\{e_i\}_{i\ge1}$ is complete in $L^2(\Omega)$,
given any $\gamma\in L^2(\Omega)$,
we set
$$ \gamma_1:=\int_\Omega \gamma \; {\mbox{ and }}
\tilde{\gamma}:=\gamma-\gamma_1.$$
Then, $\tilde{\gamma}\in L^2_0(\Omega)$, and so,
since~$\{e_i\}_{i\ge2}$ is a
complete orthogonal system in~$L^2_0(\Omega)$, there exists
a sequence of real numbers~$\{\gamma_i\}_{i\ge2}$ such that
$$ \lim_{N\to+\infty}
\left\| \tilde{\gamma}-\sum_{i=2}^N \gamma_i e_i\right\|_{L^2(\Omega)}=0.$$
Accordingly, since $\tilde{\gamma}=\gamma-\gamma_1 e_1$, we get
$$ \lim_{N\to+\infty}
\left\| \gamma-\sum_{i=1}^N \gamma_i e_i\right\|_{L^2(\Omega)}=0.$$
Since $\gamma$ is an arbitrary function of $L^2(\Omega)$, we have
shown that the system~$\{e_i\}_{i\ge1}$ is complete in $L^2(\Omega)$,
as desired.
This concludes the proof of Theorem~\ref{EVAL}.
\end{proof}

\begin{rem}
We point out that the notion of eigenfunctions in Theorem~\ref{EVAL}
is not completely standard. Indeed, the eigenfunctions~$u_i$
corresponding to the eigenvalues~$\lambda_i$ are defined in the whole of~$\R^n$,
but they satisfy an orthogonality conditions only in~$L^2(\Omega)$.

Alternatively, one can think that the ``natural'' domain of
definition for~$u_i$ is~$\Omega$ itself, since there the eigenvalue
equation~$(-\Delta)^su_i=\lambda_i u_i$ takes place, together
with the orthogonality condition, and then~$u_i$ is ``naturally'' extended
outside~$\Omega$ via the nonlocal Neumann condition. Notice indeed that
the condition~${\mathcal{N}}_s u_i=0$ in~$\R^n\setminus\overline\Omega$
is equivalent to prescribing~$u_i$ outside~$\Omega$ from the values inside~$\Omega$
according to the formula
$$ u_i (x) =
\frac{\displaystyle\int_\Omega\frac{u_i (y)}{|x-y|^{n+2s}}\,dy}{
\displaystyle\int_\Omega\frac{dy}{|x-y|^{n+2s}}}
\ {\mbox{ for any }} x\in\R^n\setminus\overline\Omega.$$
\end{rem}

In the following proposition we deal with the behavior of the solution
of~\eqref{eq-intro} at infinity.

\begin{prop}\label{lim inf}
Let~$\Omega\subset\R^n$ be a bounded domain and let
$u\in H^s_{\Omega,g}$ be a weak solution (according to Definition~\ref{def weak}) of
\[\left\{ \begin{array}{rcll}
(-\Delta)^su&=&f&\textrm{in }\Omega \\
\mathcal N_su&=&0&\textrm{in }\R^n\setminus\overline{\Omega}.
\end{array}\right.\]
Then
\[\lim_{|x|\rightarrow\infty}u(x)=\frac{1}{|\Omega|}\int_\Omega u \ {\mbox{ uniformly in }}x.\]
 \end{prop}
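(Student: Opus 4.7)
The plan is to exploit the homogeneous Neumann condition to represent $u(x)$ as a weighted average of $u$ over $\Omega$, and then to show that as $|x|\to\infty$ these weights become uniform, so the average tends to $\frac{1}{|\Omega|}\int_\Omega u$. First, inserting in the weak formulation~\eqref{weak} (with $g=0$) test functions $v\in H^s_{\Omega,g}$ supported in an open set $B\subset\R^n\setminus\overline\Omega$ at positive distance from $\partial\Omega$, the left-hand side reduces by Lemma~\ref{lema2} to $\int_{\R^n\setminus\Omega} v\,\mathcal{N}_s u$, while the right-hand side vanishes; this forces $\mathcal{N}_s u=0$ a.e.\ in $\R^n\setminus\overline\Omega$. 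For every such $x$ the integrand defining $\mathcal{N}_s u(x)$ is bounded, and rearranging $\int_\Omega\frac{u(x)-u(y)}{|x-y|^{n+2s}}\,dy=0$ yields the representation
$$u(x)=\frac{\displaystyle\int_\Omega\frac{u(y)}{|x-y|^{n+2s}}\,dy}{\displaystyle\int_\Omega\frac{dy}{|x-y|^{n+2s}}},$$
which in fact provides a continuous (indeed smooth) representative of $u$ outside any ball containing $\overline\Omega$.

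Next, fix $R>0$ with $\overline\Omega\subset B_R$ and multiply both numerator and denominator by $|x|^{n+2s}$. For $|x|\ge 2R$ and $y\in\Omega$, the elementary identity
$$\frac{|x|^{n+2s}}{|x-y|^{n+2s}}=\Bigl(1-2\tfrac{x\cdot y}{|x|^2}+\tfrac{|y|^2}{|x|^2}\Bigr)^{-(n+2s)/2}$$
shows that this ratio converges to $1$ uniformly in $y\in\Omega$ as $|x|\to\infty$. Consequently, for every $\epsilon>0$ there exists $R_\epsilon>0$ such that, for all $|x|\ge R_\epsilon$, the quantity $|x|^{n+2s}\int_\Omega|x-y|^{-n-2s}u(y)\,dy$ differs from $\int_\Omega u$ by at most $\epsilon\|u\|_{L^1(\Omega)}$, while $|x|^{n+2s}\int_\Omega|x-y|^{-n-2s}\,dy$ differs from $|\Omega|$ by at most $\epsilon|\Omega|$; I use here that $u\in L^2(\Omega)\subset L^1(\Omega)$, which is valid since $\Omega$ is bounded. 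Dividing these bounds and letting $\epsilon\to 0$ yields $|u(x)-\tfrac{1}{|\Omega|}\int_\Omega u|\to 0$ uniformly in $x$ with $|x|$ large.

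The only point requiring a little care is the passage from the weak Neumann condition to the pointwise identity displayed above; this however follows from Lemma~\ref{lema2} via a routine density argument applied to test functions supported outside $\overline\Omega$. All remaining steps consist in the uniform asymptotic expansion of the Riesz-type kernel, whose uniformity on $\Omega$ is automatic because $\Omega$ is bounded; in particular no regularity hypothesis on $\partial\Omega$ is needed beyond boundedness of the domain.
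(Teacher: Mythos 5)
Your proof is correct and follows essentially the same strategy as the paper: use the homogeneous Neumann condition to write $u(x)$ for $x$ outside $\overline\Omega$ as the Riesz-kernel weighted average $u(x)=\bigl(\int_\Omega u(y)\,|x-y|^{-n-2s}\,dy\bigr)\big/\bigl(\int_\Omega |x-y|^{-n-2s}\,dy\bigr)$, then show the weights flatten out to the uniform measure on $\Omega$ as $|x|\to\infty$. The paper carries out the second step with the elementary two-sided bound $|x|-R\le|x-y|\le|x|+R$ (giving $|x|^{n+2s}/|x-y|^{n+2s}=1+\gamma(x,y)$ with $\gamma$ small uniformly in $y$), while you use the expansion $(1-2x\cdot y/|x|^2+|y|^2/|x|^2)^{-(n+2s)/2}$; these are equivalent computations. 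You also add an explicit justification of the passage from the weak Neumann condition to the pointwise representation, which the paper simply takes for granted. One small inaccuracy: you cite Lemma~\ref{lema2} for that passage, but that lemma is stated for bounded $C^2$ functions, which a weak solution need not be. The correct route is to observe directly from the bilinear form in~\eqref{weak} that for any test function $v$ supported in $\R^n\setminus\overline\Omega$ the left-hand side equals $\int_{\R^n\setminus\Omega} v\,\mathcal N_s u$ (since $v(x)-v(y)$ vanishes when both $x,y\in\Omega$, only the two symmetric cross terms survive), whence $\mathcal N_s u=0$ a.e.\ outside $\overline\Omega$; the conclusion and the rest of your argument stand unchanged.
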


\begin{proof}
First we observe that, since~$\Omega$ is bounded,
there exists~$R>0$ such that~$\Omega\subset B_R$.
Hence, if~$y\in\Omega$, we have that
$$ |x|-R\le |x-y|\le |x|+R, $$
and so
$$ 1-\frac{R}{|x|}\le \frac{|x-y|}{|x|}\le 1+\frac{R}{|x|}.$$
Therefore, given~$\epsilon>0$, there exists~$\bar{R}>R$ such that,
for any~$|x|\ge\bar{R}$, we have
$$ \frac{|x|^{n+2s}}{|x-y|^{n+2s}}=1+\gamma(x,y), $$
where $|\gamma(x,y)|\le\epsilon$.

Recalling the definition of~$\mathcal{N}_su$ given in~\eqref{normal-s}
and using the fact that~$\mathcal{N}_su=0$ in~$\R^n\setminus\overline{\Omega}$, we have that for
any~$x\in\R^n\setminus\overline{\Omega}$
\begin{eqnarray*}
u(x) &=& \frac{\displaystyle\int_{\Omega}\frac{u(y)}{|x-y|^{n+2s}}\,dy}
{\displaystyle\int_{\Omega}\frac{dy}{|x-y|^{n+2s}}}
= \frac{\displaystyle\int_{\Omega}\frac{|x|^{n+2s}u(y)}{|x-y|^{n+2s}}\,dy}
{\displaystyle\int_{\Omega}\frac{|x|^{n+2s}}{|x-y|^{n+2s}}\,dy}\\
&&\qquad = \frac{\displaystyle\int_{\Omega}(1+\gamma(x,y))\,u(y)\,dy}
{\displaystyle\int_{\Omega}(1+\gamma(x,y))\,dy}\\
&&\qquad = \frac{\displaystyle\int_{\Omega}u(y)\,dy +\int_{\Omega}\gamma(x,y)\,u(y)\,dy}
{\displaystyle |\Omega|+\int_{\Omega}\gamma(x,y)\,dy}.
\end{eqnarray*}
We set
\begin{equation*}
\gamma_1(x) :=\ave_{\Omega}\gamma(x,y)\,u(y)\,dy \
{\mbox{ and }} \ \gamma_2(x):=\ave_{\Omega}\gamma(x,y)\,dy,
\end{equation*}
and we notice that~$|\gamma_1(x)|\le C\epsilon$ and~$|\gamma_2(x)|\le\epsilon$, for some~$C>0$.

Hence, we have that for any~$x\in\R^n\setminus\overline{\Omega}$
\begin{eqnarray*}
\left|u(x)-\ave_{\Omega}u(y)\,dy\right| &=&
\left|\frac{\displaystyle\ave_{\Omega}u(y)\,dy +\gamma_1(x)}{1+\gamma_2(x)}
-\ave_{\Omega}u(y)\,dy\right|\\
&=&\frac{\left|\displaystyle \gamma_1(x)-\gamma_2(x)\,\ave_{\Omega}u(y)\,dy\right|}{1+\gamma_2(x)}\\
&\le &\frac{C\,\epsilon}{1-\epsilon}.
\end{eqnarray*}
Therefore, sending~$\epsilon\to 0$ (that is, $|x|\rightarrow +\infty$),
we obtain the desired result.
\end{proof}

\begin{rem}[Interior regularity of solutions]
We notice that, in particular, Proposition~\ref{lim inf} implies that
$u$ is bounded at infinity.
Thus, if solutions are locally bounded,
then one could apply interior regularity results for solutions to $(-\Delta)^su=f$ in $\Omega$
(see e.g. \cite{Landkov,S-obst,CS,SV-weak}).
\end{rem}

\section{The heat equation}\label{S4}

Here we show that solutions of the nonlocal heat equation with zero Neumann datum preserve their mass and have energy that decreases in time.

To avoid technicalities, we assume that $u$ is a classical solution of problem \eqref{eq-heat-intro}, so that we can differentiate under the integral sign.

\begin{prop}\label{mass}
Assume that $u(x,t)$ is a classical solution to \eqref{eq-heat-intro}, in the sense that $u$ is bounded and $|u_t|+|(-\Delta)^su|\leq K$ for all $t>0$.
Then, for all $t>0$,
\[\int_\Omega u(x,t)\,dx=\int_\Omega u_0(x)dx.\]
In other words, the total mass is conserved.
\end{prop}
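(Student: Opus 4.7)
The plan is to differentiate the mass with respect to time, use the heat equation pointwise, and then convert the resulting integral of $(-\Delta)^s u$ over $\Omega$ into an integral of $\mathcal{N}_s u$ over $\mathbb{R}^n\setminus\Omega$ via the nonlocal divergence theorem (Lemma~\ref{lema1}). Since $\mathcal{N}_s u \equiv 0$ outside $\overline{\Omega}$ by hypothesis, the derivative of the mass is zero, and the claim follows by integrating in $t$.

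More concretely, I would first justify exchanging $\partial_t$ with $\int_\Omega\,dx$. The assumption $|u_t|+|(-\Delta)^su|\le K$ on $\Omega\times(0,+\infty)$, together with the boundedness of $\Omega$, provides the dominating integrable function needed so that
\[
\frac{d}{dt}\int_\Omega u(x,t)\,dx=\int_\Omega u_t(x,t)\,dx
\]
holds for every $t>0$. Using the PDE $u_t=-(-\Delta)^s u$ in $\Omega$, this becomes $-\int_\Omega(-\Delta)^s u(x,t)\,dx$.

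Next I would apply Lemma~\ref{lema1} to the function $x\mapsto u(x,t)$ at each fixed $t>0$ (which is bounded and classical, hence $C^2$, as required), yielding
\[
\int_\Omega(-\Delta)^s u(x,t)\,dx=-\int_{\R^n\setminus\Omega}\mathcal{N}_s u(x,t)\,dx.
\]
Since the Neumann condition gives $\mathcal{N}_s u(\cdot,t)\equiv 0$ on $\R^n\setminus\overline{\Omega}$ (and the boundary $\partial\Omega$ has zero Lebesgue measure), the right-hand side vanishes. Therefore $\frac{d}{dt}\int_\Omega u(x,t)\,dx=0$, and integrating from $0$ to $t$ and using $u(x,0)=u_0(x)$ yields the conservation of mass.

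The only real subtlety is the interchange of differentiation and integration and the applicability of Lemma~\ref{lema1} at each time slice; both are handled directly by the classical-solution hypothesis stated in the proposition ($u$ bounded, $|u_t|+|(-\Delta)^s u|\le K$), so there is no genuine obstacle beyond citing the integration-by-parts formula already established.
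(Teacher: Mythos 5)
Your proposal is correct and follows essentially the same route as the paper's proof: differentiate under the integral sign (justified by the dominated convergence theorem and the bounds in the hypothesis), replace $u_t$ by $-(-\Delta)^s u$ via the PDE, invoke Lemma~\ref{lema1} to pass to $\int_{\R^n\setminus\Omega}\mathcal N_s u$, and conclude by the Neumann condition. You simply spell out the justification for interchanging $d/dt$ with $\int_\Omega$ in more detail than the paper does.
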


\begin{proof}
By the dominated convergence theorem, and using Lemma \ref{lema1}, we have
\[\frac{d}{dt}\int_\Omega u=\int_\Omega u_t=-\int_\Omega (-\Delta)^s u=\int_{\R^n\setminus\Omega}\mathcal N_su=0.\]
Thus, the quantity $\int_\Omega u$ does not depend on $t$, and the result follows.
\end{proof}

\begin{prop}\label{eneergy}
Assume that $u(x,t)$ is a classical solution to \eqref{eq-heat-intro}, in the sense that $u$ is bounded and $|u_t|+|(-\Delta)^su|\leq K$ for all $t>0$.
Then, the energy
\[E(t)=\int_{\R^{2n}\setminus(\mathcal C \Omega)^2} \frac{|u(x,t)-u(y,t)|^2}{|x-y|^{n+2s}}\,dx\,dy\]
is decreasing in time $t>0$.
\end{prop}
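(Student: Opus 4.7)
The plan is to differentiate $E(t)$ directly in time, relate the resulting bilinear expression to the interior operator via the nonlocal integration by parts (Lemma \ref{lema2}), and then use the equation and the homogeneous Neumann condition to turn the result into the negative of an $L^2$ norm.

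\textbf{Step 1: differentiate under the integral.} Under the standing hypotheses that $u$ and $u_t$ are bounded (and the boundedness of $(-\Delta)^su$), the integrand of $E(t)$ and its time derivative are controlled by $|x-y|^{-n-2s}$ times bounded factors, which is integrable away from the diagonal; near the diagonal, for a classical solution the smoothness in $x$ yields the usual Hölder control needed for integrability. Thus one may exchange $d/dt$ with the double integral to obtain
\[
\frac{dE}{dt}=2\int_{\R^{2n}\setminus(\mathcal C\Omega)^2}
\frac{\bigl(u(x,t)-u(y,t)\bigr)\bigl(u_t(x,t)-u_t(y,t)\bigr)}{|x-y|^{n+2s}}\,dx\,dy.
\]

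\textbf{Step 2: integrate by parts.} At each fixed $t$, apply Lemma \ref{lema2} with $u(\cdot,t)$ in the first slot and $v=u_t(\cdot,t)$ in the second. This yields
\[
\frac{dE}{dt}=\frac{4}{c_{n,s}}\left[\int_\Omega u_t\,(-\Delta)^s u\,dx+\int_{\R^n\setminus\Omega} u_t\,\mathcal N_s u\,dx\right].
\]

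\textbf{Step 3: use the PDE and the boundary condition.} Since $\mathcal N_s u=0$ in $\R^n\setminus\overline{\Omega}$ for every $t>0$, the exterior term vanishes. Since $u_t=-(-\Delta)^s u$ in $\Omega$, the interior integral becomes $-\int_\Omega \bigl((-\Delta)^s u\bigr)^2\,dx$. Hence
\[
\frac{dE}{dt}=-\frac{4}{c_{n,s}}\int_\Omega \bigl((-\Delta)^s u(x,t)\bigr)^2\,dx\le 0,
\]
which proves that $E$ is non-increasing in $t$.

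\textbf{Main obstacle.} The only genuine subtlety is the justification of Step 1 (and to a lesser extent the legitimacy of using Lemma \ref{lema2}, which was stated for bounded $C^2$ functions). For a classical solution in the sense specified, both $u$ and $u_t$ are bounded in $x$ and sufficiently regular to apply Lemma \ref{lema2} at each time slice; the uniform bounds $|u_t|+|(-\Delta)^su|\le K$ then provide the dominating function needed to differentiate the double integral in $t$. Once this technical point is handled, the rest is an immediate consequence of the integration by parts formula and the homogeneous Neumann condition.
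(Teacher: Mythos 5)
Your proof is correct and follows essentially the same route as the paper: differentiate $E$ under the integral sign, apply the integration by parts formula of Lemma \ref{lema2} with $v=u_t$, and then use the homogeneous Neumann condition together with $u_t=-(-\Delta)^s u$ to conclude $E'(t)=-\tfrac{4}{c_{n,s}}\int_\Omega |(-\Delta)^s u|^2\le 0$. The only difference is cosmetic: you spell out the exterior term from Lemma \ref{lema2} before killing it, whereas the paper absorbs this into a one-line remark.
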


\begin{proof}
Let us compute $E'(t)$, and we will see that it is negative.
Indeed, using Lemma \ref{lema2},
\[\begin{split}E'(t)&= \frac{d}{dt}\int_{\R^{2n}\setminus(\mathcal C \Omega)^2} \frac{|u(x,t)-u(y,t)|^2}{|x-y|^{n+2s}}\,dx\,dy\\
&=\int_{\R^{2n}\setminus(\mathcal C \Omega)^2} \frac{2\,\bigl(u(x,t)-u(y,t)\bigr)\bigl(u_t(x,t)-u_t(y,t)\bigr)}{|x-y|^{n+2s}}\,dx\,dy\\
&=\frac{4}{c_{n,s}}\int_\Omega u_t\, (-\Delta)^su\,dx,\end{split}\]
where we have used that $\mathcal N_su=0$ in $\R^n\setminus\overline{\Omega}$.

Thus, using now the equation $u_t+(-\Delta)^su=0$ in $\Omega$, we find
\[E'(t)=-\frac{4}{c_{n,s}}\int_\Omega |(-\Delta)^su|^2dx\leq 0,\]
with strict inequality unless $u$ is constant.
\end{proof}

Next we prove that solutions of the nonlocal heat equation
with Neumann condition approach a constant as~$t\to+\infty$:

\begin{prop}\label{89}
Assume that $u(x,t)$ is a classical solution to \eqref{eq-heat-intro}, in the sense that $u$ is bounded and $|u_t|+|(-\Delta)^su|\leq K$ for all $t>0$.
Then,
\[u\,\longrightarrow\,
\frac{1}{|\Omega|}\int_\Omega u_0\quad \textrm{in}\ L^2(\Omega)\]
as $t\rightarrow+\infty$.
\end{prop}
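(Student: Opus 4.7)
The plan is to use the spectral decomposition provided by Theorem~\ref{EVAL}. After normalizing the eigenfunctions in $L^2(\Omega)$, I obtain an orthonormal complete system $\{e_i\}_{i\ge 1}$ of $L^2(\Omega)$, with corresponding eigenvalues $0=\lambda_1<\lambda_2\le\lambda_3\le\cdots$ and with first eigenfunction $e_1\equiv|\Omega|^{-1/2}$. Since each $u(\cdot,t)$ is bounded on the bounded set $\Omega$, it belongs to $L^2(\Omega)$ and therefore admits the expansion
\[
u(x,t)=\sum_{i\ge 1}a_i(t)\,e_i(x),\qquad a_i(t):=\int_\Omega u(y,t)\,e_i(y)\,dy.
\]

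Next I would derive an ODE for each coefficient. Differentiating under the integral sign (justified by the uniform bound $|u_t|+|(-\Delta)^s u|\le K$ together with $|\Omega|<+\infty$) and using the equation $u_t=-(-\Delta)^s u$ in $\Omega$ gives $a_i'(t)=-\int_\Omega(-\Delta)^s u\cdot e_i\,dx$. I would then apply Lemma~\ref{lema2} twice: first with the pair $(u,e_i)$, using $\mathcal N_s u=0$ in $\R^n\setminus\overline\Omega$, to rewrite $\int_\Omega e_i\,(-\Delta)^s u$ as the symmetric bilinear form; and then with the pair $(e_i,u)$, using $\mathcal N_s e_i=0$ in $\R^n\setminus\overline\Omega$ and $(-\Delta)^s e_i=\lambda_i e_i$ in $\Omega$, to rewrite the same bilinear form as $\lambda_i\int_\Omega u\,e_i=\lambda_i a_i(t)$. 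Hence $a_i'(t)=-\lambda_i a_i(t)$, so $a_i(t)=a_i(0)\,e^{-\lambda_i t}$. In particular $a_1(t)\equiv a_1(0)$, in agreement with the mass conservation of Proposition~\ref{mass}, and $a_1(0)\,e_1=\frac{1}{|\Omega|}\int_\Omega u_0$.

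The conclusion then follows from Parseval's identity:
\[
\left\|u(\cdot,t)-\frac{1}{|\Omega|}\int_\Omega u_0\right\|_{L^2(\Omega)}^2=\sum_{i\ge 2}a_i(0)^2\,e^{-2\lambda_i t}.
\]
Since $\sum_{i\ge 2}a_i(0)^2\le\|u_0\|_{L^2(\Omega)}^2<+\infty$ and each term $e^{-2\lambda_i t}\to 0$ as $t\to+\infty$ (using $\lambda_i\ge\lambda_2>0$ for $i\ge 2$), dominated convergence on the summation index yields the desired $L^2$ convergence.

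The only delicate steps are the interchange of differentiation and integration and the interchange of limit and summation, both of which are handled by the pointwise bound assumed on $u_t$ and $(-\Delta)^s u$. A more PDE-flavored alternative would combine the energy dissipation of Proposition~\ref{eneergy} with the Poincar\'e inequality of Lemma~\ref{poincare} applied to $u(\cdot,t)-|\Omega|^{-1}\int_\Omega u_0$ (which has zero mean by Proposition~\ref{mass} and solves the same equation), but quantifying the decay rate from that perspective would still reduce to the spectral gap $\lambda_2>0$, so the spectral route is the cleanest one.
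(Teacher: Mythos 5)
Your spectral-decomposition argument is correct, but it is a genuinely different route from the paper's. The paper proves the claim directly with an elementary energy argument: it sets $A(t):=\int_\Omega|u-m|^2$ where $m$ is the conserved mass, computes via Lemma~\ref{lema2} that $A'(t)=-c_{n,s}\int_{\R^{2n}\setminus(\mathcal C\Omega)^2}|u(x,t)-u(y,t)|^2\,|x-y|^{-n-2s}\,dx\,dy$, and then invokes the Poincar\'e inequality of Lemma~\ref{poincare} together with mass conservation to get $A'(t)\le-cA(t)$, hence $A(t)\le e^{-ct}A(0)$. This is exactly the ``PDE-flavored alternative'' you mention at the end of your write-up, and in the paper it precedes (and does not rely on) the eigenvalue theorem.

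Your route instead passes through the spectral machinery of Theorem~\ref{EVAL}. It is cleaner conceptually and identifies the exact decay rate $e^{-\lambda_2 t}$, whereas the paper's Poincar\'e argument gives $e^{-ct}$ with an unspecified $c>0$; but it is also heavier: it requires the full spectral theorem (which in turn rests on compactness, Fredholm, and the same Poincar\'e inequality), and it uses Lemma~\ref{lema2} on the eigenfunctions $e_i$, which are only known a priori to lie in $H^s_{\Omega,0}$, not to be bounded $C^2$ functions as that lemma's hypothesis requires. The step $a_i'(t)=-\lambda_i a_i(t)$ is therefore better justified by invoking the weak formulation \eqref{weak} directly with the classical solution $u(\cdot,t)$ as test function, rather than by a double application of Lemma~\ref{lema2}; once phrased that way the rest is airtight. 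One small simplification: instead of dominated convergence on the summation index, one can bound the tail as $\sum_{i\ge2}a_i(0)^2e^{-2\lambda_i t}\le e^{-2\lambda_2 t}\sum_{i\ge2}a_i(0)^2\le e^{-2\lambda_2 t}\|u_0\|_{L^2(\Omega)}^2$, which makes the exponential rate explicit and mirrors the form of the paper's conclusion.
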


\begin{proof}
Let
\[m:=\frac{1}{|\Omega|}\int_\Omega u_0\]
be the total mass of $u$.
Define also
\[A(t):=\int_\Omega |u-m|^2\,dx.\]
Notice that, by Proposition~\ref{mass}, we have
\[A(t)=\int_\Omega \bigl(u^2-2m u+m^2\bigr)dx=\int_\Omega u^2\,dx-|\Omega|m^2.\]
Then, by Lemma \ref{lema2},
\[A'(t)=2\int_\Omega u_tu\,dx=-2\int_\Omega u(-\Delta)^su\,dx
=-c_{n,s} \int_{\R^{2n}\setminus(\mathcal C \Omega)^2} \frac{|u(x,t)-u(y,t)|^2}{|x-y|^{n+2s}}\,dx\,dy.\]
Hence, $A$ is decreasing.

Moreover, using the Poincar\'e inequality in Lemma \ref{poincare}
and again Proposition~\ref{mass}, we deduce that
\[A'(t)\leq -c\int_\Omega |u-m|^2\,dx=-c\,A(t),\]
for some~$c>0$. Thus, it follows that
\[A(t)\leq e^{-ct} A(0),\]
and thus
\[\lim_{t\rightarrow+\infty}\int_\Omega |u(x,t)-m|^2dx=0,\]
i.e., $u$ converges to $m$ in $L^2(\Omega)$.

Notice that, in fact, we have proved that the convergence is exponentially fast.
\end{proof}

\section{Limits}\label{Slimit}

In this section
we study the limits as $s\rightarrow1$
and the continuity
properties induced by the fractional Neumann condition.

\subsection{Limit as $s\to1$}

\begin{prop}\label{89X}
Let $\Omega\subset\R^n$ be any bounded Lipschitz domain.
Let $u$ and $v$ be $C^2_0(\R^n)$ functions.
Then,
\[\lim_{s\rightarrow 1}\int_{\R^n\setminus\Omega}\mathcal N_s u\,v=\int_{\partial\Omega}\frac{\partial u}{\partial\nu}\,v.\]
\end{prop}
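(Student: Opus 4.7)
The plan is to start from the nonlocal integration by parts formula of Lemma~\ref{lema2}, rearrange it so that $\int_{\R^n\setminus\Omega} v\,\mathcal N_s u$ is isolated on the left, and then pass to the limit $s\to 1$ in each of the remaining terms, recovering the classical Green identity on $\Omega$. Specifically, Lemma~\ref{lema2} gives
\begin{equation*}
\int_{\R^n\setminus\Omega} v\,\mathcal N_s u \;=\; \frac{c_{n,s}}{2}\int_{\R^{2n}\setminus(\mathcal C\Omega)^2}\frac{(u(x)-u(y))(v(x)-v(y))}{|x-y|^{n+2s}}\,dx\,dy \;-\; \int_\Omega v\,(-\Delta)^s u\,dx,
\end{equation*}
so the problem reduces to computing the $s\to 1$ limit of the right-hand side.

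The next step is to split the double integral via
\begin{equation*}
\int_{\R^{2n}\setminus(\mathcal C\Omega)^2} = \int_{\R^{2n}} - \int_{(\mathcal C\Omega)^2}
\end{equation*}
and apply a Bourgain--Brezis--Mironescu type recovery formula to each piece. With the normalization of $c_{n,s}$ chosen, as usual, so that $(-\Delta)^s\to -\Delta$ on $C^2_c$-functions, one has
\begin{equation*}
\frac{c_{n,s}}{2}\int_{\R^{2n}}\frac{(u(x)-u(y))(v(x)-v(y))}{|x-y|^{n+2s}}\,dx\,dy \;\longrightarrow\; \int_{\R^n}\nabla u\cdot\nabla v\,dx,
\end{equation*}
and the same convergence applied on the Lipschitz open set $\R^n\setminus\overline\Omega$ gives
\begin{equation*}
\frac{c_{n,s}}{2}\int_{(\mathcal C\Omega)^2}\frac{(u(x)-u(y))(v(x)-v(y))}{|x-y|^{n+2s}}\,dx\,dy \;\longrightarrow\; \int_{\R^n\setminus\Omega}\nabla u\cdot\nabla v\,dx.
\end{equation*}
Subtracting, the double-integral piece tends to $\int_\Omega \nabla u\cdot\nabla v\,dx$. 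For the remaining term, since $u\in C^2_0(\R^n)$ standard pointwise estimates yield $(-\Delta)^s u\to-\Delta u$ uniformly on compact sets, and dominated convergence gives $\int_\Omega v\,(-\Delta)^s u\,dx\to -\int_\Omega v\,\Delta u\,dx$.

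Combining these limits, the right-hand side converges to $\int_\Omega\nabla u\cdot\nabla v\,dx+\int_\Omega v\,\Delta u\,dx$, which by the classical divergence theorem applied to the vector field $F=v\,\nabla u$ on the Lipschitz domain~$\Omega$ is precisely $\int_{\partial\Omega} v\,\partial_\nu u\,d\sigma$, as required. The main technical point is the rigorous justification of the BBM-type limit on the unbounded Lipschitz open set $\R^n\setminus\overline\Omega$: since $u,v\in C^2_0(\R^n)$, their restrictions lie in $W^{1,2}(\R^n\setminus\overline\Omega)$, so one may either invoke the BBM/D\'avila theorem in the form valid on Lipschitz domains, or, more elementarily, exploit the compact support of $u,v$ to cut off the integrand and reduce to the BBM statement on all of $\R^n$, up to an error that is easily shown to vanish in the limit thanks to the smoothness of $u,v$ near $\partial\Omega$.
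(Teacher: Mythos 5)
Your proof is correct and follows the same high-level strategy as the paper's --- isolate $\int_{\R^n\setminus\Omega}v\,\mathcal N_s u$ via Lemma~\ref{lema2}, identify the $s\to 1$ limit of the Gagliardo form on $\R^{2n}\setminus(\mathcal C\Omega)^2$ with $\int_\Omega\nabla u\cdot\nabla v$, use $(-\Delta)^s u\to-\Delta u$, and close with Green's identity --- but the central BBM step is handled by a genuinely different decomposition. The paper splits $\R^{2n}\setminus(\mathcal C\Omega)^2$ into $\Omega\times\Omega$ plus the two cross blocks $\Omega\times\mathcal C\Omega$, first polarizes to reduce to the quadratic form, then shows (their~\eqref{CL00}) that $(1-s)$ times the cross contribution vanishes, and applies BBM only on the \emph{bounded} Lipschitz domain $\Omega$, fixing the constant by a Plancherel computation for a compactly supported test function. (Incidentally, the paper's published bound for~\eqref{CL00} uses only $\|u\|_{L^\infty}$, which does not control the near-diagonal singularity for $s\ge\tfrac12$; the estimate is true, but one must use the $C^1$ bound on $u$ near $\partial\Omega$.) You instead write $\int_{\R^{2n}\setminus(\mathcal C\Omega)^2}=\int_{\R^{2n}}-\int_{(\mathcal C\Omega)^2}$ and apply BBM twice --- once on $\R^n$, which is the easy Fourier/Plancherel case, and once on the \emph{unbounded} Lipschitz open set $\mathcal C\Omega$, which is not in the original BBM statement and which you rightly flag as the main technical point. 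What this buys is the elimination of any explicit cross-term lemma; what it costs is the less standard BBM formulation on $\mathcal C\Omega$. Note, though, that the two routes are essentially two faces of the same computation: the most direct way to justify BBM on $\mathcal C\Omega$ for $u\in C^2_0(\R^n)$ is precisely to compare with BBM on $\R^n$ and kill the $\Omega\times\mathcal C\Omega$ cross block, i.e.~to prove exactly~\eqref{CL00}. So either fill in that estimate carefully or cite a version of BBM (e.g.~in the spirit of D\'avila/Ponce) valid on extension domains; with that done, your argument is complete.
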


\begin{proof}
By Lemma \ref{lema2}, we have that
\begin{equation}\label{bb40}
\int_{\R^n\setminus\Omega}\mathcal N_s u\,v=\frac{c_{n,s}}{2}\int_{\R^{2n}\setminus (\mathcal{C}\Omega)^2}\frac{(u(x)-u(y))(v(x)-v(y))}{|x-y|^{n+2s}}\,dx\,dy
-\int_\Omega v(-\Delta)^su.
\end{equation}

Now, we claim that
\begin{equation}\label{bb70}
\lim_{s\to 1}\frac{c_{n,s}}{2}\int_{\R^{2n}\setminus (\mathcal{C}\Omega)^2}\frac{(u(x)-u(y))(v(x)-v(y))}{|x-y|^{n+2s}}\,dx\,dy
= \int_{\Omega}\nabla u\cdot\nabla v.
\end{equation}
We observe that to show~\eqref{bb70}, it is enough to prove that, for any~$u\in C^2_0(\R^n)$,
\begin{equation}\label{bb71}
\lim_{s\to 1}\frac{c_{n,s}}{2}\int_{\R^{2n}\setminus (\mathcal{C}\Omega)^2}\frac{|u(x)-u(y)|^2}{|x-y|^{n+2s}}\,dx\,dy
= \int_{\Omega}|\nabla u|^2.
\end{equation}
Indeed,
\begin{eqnarray*}
&& \int_{\R^{2n}\setminus (\mathcal{C}\Omega)^2}\frac{(u(x)-u(y))(v(x)-v(y))}{|x-y|^{n+2s}}\,dx\,dy \\
&&\qquad\qquad = \frac12 \int_{\R^{2n}\setminus (\mathcal{C}\Omega)^2}\frac{|(u+v)(x)-(u+v)(y)|^2}{|x-y|^{n+2s}}\,dx\,dy \\
&&\qquad\qquad\qquad -\frac12 \int_{\R^{2n}\setminus (\mathcal{C}\Omega)^2}\frac{|u(x)-u(y)|^2}{|x-y|^{n+2s}}\,dx\,dy\\
&&\qquad\qquad\qquad -\frac12 \int_{\R^{2n}\setminus (\mathcal{C}\Omega)^2}\frac{|v(x)-v(y)|^2}{|x-y|^{n+2s}}\,dx\,dy.
\end{eqnarray*}

Now, we recall that
$$ \lim_{s\to 1}\frac{c_{n,s}}{1-s}= \frac{4n}{\omega_{n-1}}, $$
(see Corollary~4.2 in~\cite{DPV}), and so we have to show that
\begin{equation}\label{bb500}
\lim_{s\to 1}\,(1-s)\int_{\R^{2n}\setminus (\mathcal{C}\Omega)^2}
\frac{|u(x)-u(y)|^2}{|x-y|^{n+2s}}\, dx\, dy =
\frac{\omega_{n-1}}{2n} \int_{\Omega}|\nabla u|^2.
\end{equation}
For this, we first show that
\begin{equation}\label{CL00}
\lim_{s\to 1}\,(1-s)\int_{\Omega\times(\mathcal{C}\Omega)}
\frac{|u(x)-u(y)|^2}{|x-y|^{n+2s}}\, dx\, dy =0.
\end{equation}
Without loss of generality, we can suppose that~$B_r\subset\Omega\subset B_R$,
for some~$0<r<R$. Since~$u\in C^2_0(\R^n)$, then
\begin{eqnarray*}
\int_{\Omega\times(\mathcal{C}\Omega)}
\frac{|u(x)-u(y)|^2}{|x-y|^{n+2s}}\, dx\, dy
&\le & 4\|u\|^2_{L^\infty(\R^n)} \int_{\Omega\times(\mathcal{C}\Omega)}
\frac{1}{|x-y|^{n+2s}}\, dx\, dy \\
&\le & 4\|u\|^2_{L^\infty(\R^n)} \int_{B_R\times(\mathcal{C}B_r)}
\frac{1}{|x-y|^{n+2s}}\, dx\, dy \\
&\le & 4\|u\|^2_{L^\infty(\R^n)}\,\omega_{n-1}\int_{B_R}\,dx\,\int_r^{+\infty}\rho^{n-1}\rho^{-n-2s}\,d\rho \\
&=& 4\|u\|^2_{L^\infty(\R^n)}\,\omega_{n-1}\int_{B_R}\,dx\,\int_r^{+\infty}\rho^{-1-2s}\,d\rho \\
&=& 4\|u\|^2_{L^\infty(\R^n)}\,\frac{\omega_{n-1}\,r^{-2s}}{2s}\int_{B_R}\,dx\\
&=& 4\|u\|^2_{L^\infty(\R^n)}\,\frac{\omega_{n-1}^2\,R^n\,r^{-2s}}{2s},
\end{eqnarray*}
which implies~\eqref{CL00}.
Hence,
\begin{equation}\begin{split}\label{constant}
&\lim_{s\to 1}\,(1-s)\int_{\R^{2n}\setminus (\mathcal{C}\Omega)^2}
\frac{|u(x)-u(y)|^2}{|x-y|^{n+2s}}\, dx\, dy \\
&\qquad =\lim_{s\to 1}\,(1-s)\int_{\Omega\times\Omega}
\frac{|u(x)-u(y)|^2}{|x-y|^{n+2s}}\, dx\, dy = C_{n}\,\int_{\Omega}|\nabla u|^2,
\end{split}\end{equation}
where~$C_n>0$ depends only on the dimension, see~\cite{BBM}.

In order to determine the constant~$C_n$, we take a~$C^2$-function~$u$
supported in~$\Omega$. In this case, we have
\begin{equation}\label{rpeoohjb}
\int_{\Omega}|\nabla u|^2\,dx = \int_{\R^n}|\nabla u|^2\,dx
= \int_{\R^n}|\xi|^2\,|\hat{u}(\xi)|^2\,d\xi,
\end{equation}
where~$\hat{u}$ is the Fourier transform of~$u$.
Moreover,
\begin{eqnarray*}
\int_{\R^{2n}\setminus(\mathcal{C}\Omega)^2}\frac{|u(x)-u(y)|^2}{|x-y|^{n+2s}}\,dx\,dy
&=& \int_{\R^{2n}}\frac{|u(x)-u(y)|^2}{|x-y|^{n+2s}}\,dx\,dy \\
&=& 2\,c_{n,s}^{-1}\int_{\R^n}|\xi|^{2s}\,|\hat{u}(\xi)|^2\,dx,
\end{eqnarray*}
thanks to Proposition~3.4 in~\cite{DPV}. Therefore, using Corollary~4.2 in~\cite{DPV}
and~\eqref{rpeoohjb}, we have
\begin{eqnarray*}
&&\lim_{s\to 1}\,(1-s)\int_{\R^{2n}\setminus (\mathcal{C}\Omega)^2}
\frac{|u(x)-u(y)|^2}{|x-y|^{n+2s}}\, dx\, dy \\
&&\qquad =\lim_{s\to 1}\frac{2(1-s)}{c_{n,s}}\int_{\R^n}|\xi|^{2s}\,|\hat{u}(\xi)|^2\,dx\\
&&\qquad = \frac{\omega_{n-1}}{2n}\,\int_{\R^n}|\xi|^2\,|\hat{u}(\xi)|^2\,dx\\
&&\qquad = \frac{\omega_{n-1}}{2n}\,\int_{\Omega}|\nabla u|^2\,dx.
\end{eqnarray*}
Hence, the constant in~\eqref{constant} is~$C_n=\frac{\omega_{n-1}}{2n}$.
This concludes the proof of~\eqref{bb500}, and in turn of~\eqref{bb70}.

On the other hand,
\[-(-\Delta)^su\rightarrow \Delta u\qquad \textrm{uniformly in}\ \R^n,\]
(see Proposition~4.4 in~\cite{DPV}).
This, \eqref{bb40} and \eqref{bb70} give
\[\lim_{s\rightarrow 1}\int_{\R^n\setminus\Omega}\mathcal N_s u\,v=\int_\Omega \nabla u\cdot\nabla v+\int_\Omega v\,\Delta u=\int_{\partial\Omega}\frac{\partial u}{\partial\nu}\,v,\]
as desired.
\end{proof}

\subsection{Continuity properties}

Following is a continuity result for functions satisfying
the nonlocal Neumann condition:

\begin{prop}\label{cont}
Let~$\Omega\subset\R^n$ be a domain with~$C^1$ boundary.
Let~$u$ be continuous in~$\overline\Omega$, with~${\mathcal{N}}_s u=0$
in~$\R^n\setminus\overline\Omega$. Then~$u$ is continuous
in the whole of~$\R^n$.
\end{prop}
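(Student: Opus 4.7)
The function $u$ is continuous on $\overline{\Omega}$ by hypothesis, so I only need to check continuity at points $x_0\in\R^n\setminus\Omega$. The essential tool is the representation formula forced by $\mathcal N_s u=0$, namely
\[
u(x)=\frac{\displaystyle\int_\Omega \frac{u(y)}{|x-y|^{n+2s}}\,dy}{\displaystyle\int_\Omega \frac{dy}{|x-y|^{n+2s}}}, \qquad x\in\R^n\setminus\overline{\Omega},
\]
which expresses $u$ outside $\overline\Omega$ as a weighted average of its interior values. I will split into two cases: $(i)$ $x_0\in \R^n\setminus\overline\Omega$, where continuity is a routine dominated-convergence argument, and $(ii)$ $x_0\in\partial\Omega$, approached from outside, which is the real content.

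For case $(i)$, since $d:=\mathrm{dist}(x_0,\overline\Omega)>0$, on $B_{d/2}(x_0)$ the integrands $|u(y)|/|x-y|^{n+2s}$ and $1/|x-y|^{n+2s}$ are dominated uniformly in $x$ by $\|u\|_{L^\infty(\overline\Omega)}(d/2)^{-n-2s}$ and $(d/2)^{-n-2s}$ respectively (I will assume $\Omega$ bounded so these are integrable; otherwise one localizes). Dominated convergence then yields continuity of both numerator and denominator, and the denominator is bounded away from zero, so the quotient is continuous.

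For case $(ii)$, fix $x_0\in\partial\Omega$ and $\epsilon>0$. By continuity of $u$ on $\overline\Omega$ at $x_0$, choose $\delta>0$ with $|u(y)-u(x_0)|<\epsilon$ for all $y\in\overline\Omega\cap B_\delta(x_0)$. Writing
\[
u(x)-u(x_0)=\frac{\displaystyle\int_{\Omega}\frac{u(y)-u(x_0)}{|x-y|^{n+2s}}\,dy}{\displaystyle\int_\Omega \frac{dy}{|x-y|^{n+2s}}},
\]
I split the numerator into integrals over $\Omega_1:=\Omega\cap B_\delta(x_0)$ and $\Omega_2:=\Omega\setminus B_\delta(x_0)$. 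The $\Omega_1$-part is bounded in absolute value by $\epsilon\int_{\Omega_1}|x-y|^{-n-2s}\,dy\le\epsilon\int_{\Omega}|x-y|^{-n-2s}\,dy$, which contributes at most $\epsilon$ to the quotient. For the $\Omega_2$-part, whenever $|x-x_0|<\delta/2$ I have $|x-y|\ge\delta/2$ on $\Omega_2$, so this part of the numerator is bounded by $2\|u\|_{L^\infty(\overline\Omega)}|\Omega|(2/\delta)^{n+2s}$, a constant depending on $\delta$ but not on $x$.

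The final step, and the main technical point, is to show that the denominator blows up as $x\to x_0$ through $\R^n\setminus\overline\Omega$, so that the $\Omega_2$ contribution vanishes in the quotient. Here I use the $C^1$ regularity of $\partial\Omega$ to exhibit a truncated open cone $K\subset\Omega$ with vertex at $x_0$, of fixed opening and height $r>0$ (straightening $\partial\Omega$ locally as a $C^1$ graph makes this explicit). For $y\in K$ near $x_0$ and $x$ close to $x_0$ from outside, the triangle inequality gives $|x-y|\le |x-x_0|+|y-x_0|$, so
\[
\int_\Omega \frac{dy}{|x-y|^{n+2s}}\;\ge\;\int_K \frac{dy}{\bigl(|x-x_0|+|y-x_0|\bigr)^{n+2s}}\;\ge\;c_K\int_0^r \frac{\rho^{n-1}\,d\rho}{(|x-x_0|+\rho)^{n+2s}}.
\]
The substitution $\rho=|x-x_0|\,t$ shows that the right-hand side is bounded below by $c\,|x-x_0|^{-2s}$ times a positive integral that converges as $|x-x_0|\to 0$, so the denominator indeed diverges at least like $|x-x_0|^{-2s}$. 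Combining the three estimates, $\limsup_{x\to x_0}|u(x)-u(x_0)|\le\epsilon$, and since $\epsilon$ is arbitrary this finishes the proof. The only delicate point is the cone construction, and it is handled cleanly by the $C^1$ hypothesis on $\partial\Omega$.
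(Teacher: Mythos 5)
Your proof is correct, and it takes a genuinely different route from the paper's argument for the delicate case $x_0\in\partial\Omega$. The paper compares $u(p_k)$ with $u(q_k)$, where $q_k$ is the nearest point to $p_k$ in $\overline\Omega$, rescales the domain by the small parameter $h_k=|p_k-q_k|$, splits the averaging integral at the \emph{shrinking} radius $\sqrt{h_k}$, and invokes the convergence of the rescaled domains $h_k^{-1}\mathcal R_k\Omega$ to a halfspace to control the tail. You instead compare $u(x)$ directly with $u(x_0)$, split at a \emph{fixed} radius $\delta$ coming from the continuity modulus of $u$ at $x_0$, and make the tail harmless by proving the quantitative lower bound
$$\int_\Omega\frac{dy}{|x-y|^{n+2s}}\ \gtrsim\ |x-x_0|^{-2s},$$
which you obtain by inscribing a truncated cone $K\subset\Omega$ with vertex at $x_0$ (available because $\partial\Omega$ is $C^1$, hence $|\gamma(x')|=o(|x'|)$ near $x_0$ in suitable coordinates). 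This cone-plus-blow-up-rate argument is more elementary than the paper's rescaled-domain argument and, as a bonus, makes explicit the rate at which the normalizing weight $w_{s,\Omega}(x)$ blows up near $\partial\Omega$. What the paper's version buys instead is that it never needs a pointwise interior-cone construction: it only uses that the rescaled domains converge to a halfspace, which yields the convergence of the two ratios of integrals over $\Omega_k$ to the corresponding ratios over $\Pi$.

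Two small remarks. First, your bound on the $\Omega_2$-part of the numerator, $2\|u\|_{L^\infty(\overline\Omega)}|\Omega|(2/\delta)^{n+2s}$, implicitly assumes $|\Omega|<\infty$; this is harmless here but is avoidable, since for $|x-x_0|<\delta/2$ one has $\Omega\setminus B_\delta(x_0)\subset\R^n\setminus B_{\delta/2}(x)$ and therefore
$$\int_{\Omega\setminus B_\delta(x_0)}\frac{dy}{|x-y|^{n+2s}}\le\int_{\R^n\setminus B_{\delta/2}(x)}\frac{dy}{|x-y|^{n+2s}}=\frac{\omega_{n-1}}{2s}\Big(\frac\delta2\Big)^{-2s},$$
which is finite without any boundedness of $\Omega$. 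Second, for the reader's benefit it is worth recording explicitly that, after substituting $\rho=|x-x_0|t$, the integral $\int_0^{r/|x-x_0|}t^{n-1}(1+t)^{-n-2s}\,dt$ increases to the finite value $\int_0^\infty t^{n-1}(1+t)^{-n-2s}\,dt$, so the lower bound is uniform once $|x-x_0|$ is small; you say this but it is the single point one must not elide.
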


\begin{proof} First, let us fix~$x_0\in \R^n\setminus\overline\Omega$.
Since the latter is an open set,
there exists~$\rho>0$ such that~$|x_0-y|
\ge\rho$ for any~$y\in\Omega$. Thus, if~$x\in B_{\rho/2}(x_0)$,
we have that~$|x-y|\ge |x_0-y|-|x_0-x|\ge\rho/2$.

Moreover, if~$x\in B_{\rho/2}(x_0)$, we have that
$$|x-y|\ge |y| -|x_0| -|x_0-x| \ge \frac{|y|}{2}
+\left(\frac{|y|}{4} -|x_0| \right) +\left(\frac{|y|}{4} -\frac\rho2\right)
\ge \frac{|y|}{2},$$
provided that~$|y|\ge R := 4|x_0|+2\rho$.
As a consequence, for any~$x\in B_{\rho/2}(x_0)$, we have that
$$ \frac{|u(y)|+1}{|x-y|^{n+2s}} \le 2^{n+2s}\,
(\|u\|_{L^\infty(\overline\Omega)}+1)\,
\left( \frac{\chi_{B_{R}}(y)}{\rho^{n+2s}}+
\frac{\chi_{\R^n\setminus B_{R}}(y)}{|y|^{n+2s}}\right)=:\psi(y)$$
and the function~$\psi$ belongs to~$L^1(\R^n)$. Thus,
by the Neumann condition and the Dominated Convergence Theorem, we obtain that
$$ \lim_{x\to x_0} u(x)
=\lim_{x\to x_0} \frac{
\displaystyle\int_\Omega \frac{u(y)}{|x-y|^{n+2s}}\,dy}{
\displaystyle\int_\Omega \frac{dy}{|x-y|^{n+2s}}}
=\frac{
\displaystyle\int_\Omega \frac{u(y)}{|x_0-y|^{n+2s}}\,dy}{
\displaystyle\int_\Omega \frac{dy}{|x_0-y|^{n+2s}}}
=u(x_0).$$
This proves that~$u$ is continuous
at any points of~$\R^n\setminus\overline\Omega$.

Now we show the continuity at a point~$p\in\partial\Omega$.
We take a sequence~$p_k\to p$ as~$k\to+\infty$.
We let~$q_k$ be the projection of~$p_k$ to~$\overline\Omega$.
Since~$p\in\overline\Omega$, we have from the minimizing
property of the projection that
$$|p_k-q_k|=\inf_{\xi\in\overline\Omega} |p_k-\xi|\le |p_k-p|,$$
and so
$$ |q_k-p|\le |q_k-p_k|+|p_k-p|\le 2|p_k-p|\to 0$$
as $k\to+\infty$. Therefore,
since we already know from
the assumptions the continuity of~$u$ at~$\overline\Omega$,
we obtain that
\begin{equation}\label{long}
\lim_{k\to+\infty} u(q_k)=u(p).\end{equation}
Now we claim that
\begin{equation}\label{long2}
\lim_{k\to+\infty} u(p_k)-u(q_k)=0.\end{equation}
To prove it, it is enough to consider the points
of the sequence~$p_k$ that belong to~$\R^n\setminus\overline\Omega$
(since, of course, the points~$p_k$ belonging to~$\overline\Omega$ satisfy~$p_k=q_k$
and for them~\eqref{long2} is obvious).
We define~$\nu_k:=(p_k-q_k)/|p_k-q_k|$. Notice that~$\nu_k$
is the exterior normal of~$\Omega$ at~$q_k\in\partial\Omega$.
We consider a rigid motion~${\mathcal{R}}_k$
such that~${\mathcal{R}}_k q_k =0$ and~${\mathcal{R}}_k \nu_k =
e_n=(0,\cdots,0,1)$.
Let also~$h_k:=|p_k-q_k|$.
Notice that
\begin{equation}\label{90X} h_k^{-1} {\mathcal{R}}_k p_k
=h_k^{-1} {\mathcal{R}}_k (p_k-q_k)={\mathcal{R}}_k \nu_k=e_n.\end{equation}
Then, the domain
$$ \Omega_k:= h_k^{-1}{\mathcal{R}}_k \Omega $$
has vertical exterior normal at~$0$ and approaches the halfspace~$\Pi:=
\{x_n<0\}$ as~$k\to+\infty$.

Now, we use the Neumann
condition at~$p_k$ and we obtain that
\begin{eqnarray*} && u(p_k)-u(q_k)=
\frac{
\displaystyle\int_\Omega \frac{u(y)}{|p_k-y|^{n+2s}}\,dy}{
\displaystyle\int_\Omega \frac{dy}{|p_k-y|^{n+2s}}}
-u(q_k)\\ &&\qquad =
\frac{
\displaystyle\int_\Omega \frac{u(y)-u(q_k)}{|p_k-y|^{n+2s}}\,dy}{
\displaystyle\int_\Omega \frac{dy}{|p_k-y|^{n+2s}}} = I_1+I_2,\end{eqnarray*}
with
\begin{eqnarray*}
&& I_1:=\frac{
\displaystyle\int_{\Omega\cap B_{\sqrt{h_k}}(q_k)} \frac{u(y)-u(q_k)}{
|p_k-y|^{n+2s}}\,dy}{
\displaystyle\int_\Omega \frac{dy}{|p_k-y|^{n+2s}}} \\
{\mbox{and }}
&& I_2:=\frac{
\displaystyle\int_{\Omega\setminus B_{\sqrt{h_k}}(q_k)} \frac{u(y)-u(q_k)}{
|p_k-y|^{n+2s}}\,dy}{
\displaystyle\int_\Omega \frac{dy}{|p_k-y|^{n+2s}}}.\end{eqnarray*}
We observe that
the uniform continuity of~$u$ in~$\overline\Omega$
gives that
$$ \lim_{k\to+\infty} \sup_{y\in \Omega\cap B_{\sqrt{h_k}}(q_k)} |u(y)-u(q_k)|
=0.$$
As a consequence
\begin{equation}\label{90I1}
|I_1|\le \sup_{y\in \Omega\cap B_{\sqrt{h_k}}(q_k)} |u(y)-u(q_k)| \to0\end{equation}
as $k\to+\infty$.
Moreover, exploiting
the change of variable~$\eta:=
h_k^{-1}{\mathcal{R}}_k y$ and recalling~\eqref{90X}, we obtain that
\begin{eqnarray*}
|I_2|&\le&\frac{
\displaystyle\int_{\Omega\setminus B_{\sqrt{h_k}}(q_k)} \frac{|u(y)-u(q_k)|}{
|p_k-y|^{n+2s}}\,dy}{
\displaystyle\int_\Omega \frac{dy}{|p_k-y|^{n+2s}}} \\
&\le& 2\|u\|_{L^\infty(\overline\Omega)}
\frac{
\displaystyle\int_{\Omega\setminus B_{\sqrt{h_k}}(q_k)} \frac{dy}{
|p_k-y|^{n+2s}}}{
\displaystyle\int_\Omega \frac{dy}{|p_k-y|^{n+2s}}} \\
&=& 2\|u\|_{L^\infty(\overline\Omega)}
\frac{
\displaystyle\int_{\Omega_k\setminus B_{1/\sqrt{h_k}}} \frac{d\eta}{
|e_n-\eta|^{n+2s}}}{
\displaystyle\int_{\Omega_k} \frac{d\eta}{|e_n-\eta|^{n+2s}}}.
\end{eqnarray*}
Notice that, if~$\eta\in \Omega_k\setminus B_{1/\sqrt{h_k}}$
then
\begin{eqnarray*} && |e_n-\eta|^{n+2s} = |e_n-\eta|^{n+s} |e_n-\eta|^{s}
\ge |e_n-\eta|^{n+s} \Big( |\eta|-1\Big)^s \\ &&\qquad\ge
|e_n-\eta|^{n+s} \Big( h_k^{-1/2}-1\Big)^s \ge
|e_n-\eta|^{n+s} h_k^{-s/4}\end{eqnarray*}
for large~$k$.
Therefore
$$ |I_2|\le
2h_k^{s/4} \|u\|_{L^\infty(\overline\Omega)}
\frac{
\displaystyle\int_{\Omega_k} \frac{d\eta}{
|e_n-\eta|^{n+s}}}{
\displaystyle\int_{\Omega_k} \frac{d\eta}{|e_n-\eta|^{n+2s}}}.$$
Since
$$ \lim_{k\to+\infty}\frac{
\displaystyle\int_{\Omega_k} \frac{d\eta}{
|e_n-\eta|^{n+s}}}{
\displaystyle\int_{\Omega_k} \frac{d\eta}{|e_n-\eta|^{n+2s}}}=
\frac{
\displaystyle\int_{\Pi} \frac{d\eta}{
|e_n-\eta|^{n+s}}}{
\displaystyle\int_{\Pi} \frac{d\eta}{|e_n-\eta|^{n+2s}}},$$
we conlude that~$|I_2|\to0$ as~$k\to+\infty$. This and~\eqref{90I1}
imply~\eqref{long2}.

{F}rom \eqref{long} and~\eqref{long2}, we conclude that
$$ \lim_{k\to+\infty} u(p_k)=u(p),$$
hence~$u$ is continuous at~$p$.
\end{proof}

As a direct consequence of Proposition~\ref{cont}
we obtain:

\begin{cor}\label{C con}
Let~$\Omega\subset\R^n$ be a domain with~$C^1$ boundary.
Let~$v_0\in C(\R^n)$.
Let
$$ v(x):=\left\{
\begin{matrix}
v_0(x) & {\mbox{ if $x\in\overline\Omega$,}}\\
\\
\frac{\displaystyle\int_\Omega
\displaystyle\frac{v_0(y)}{|x-y|^{n+2s}}\,dy}{
\displaystyle\int_\Omega \displaystyle\frac{dy}{|x-y|^{n+2s}}} & {\mbox{ if
$x\in\R^n\setminus\overline\Omega$.}}
\end{matrix}\right. $$
Then $v\in C(\R^n)$ and it satisfies~$v=v_0$ in~$\overline\Omega$
and~${\mathcal{N}}_s v=0$ in~$\R^n\setminus\overline\Omega$.
\end{cor}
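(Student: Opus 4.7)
The plan is to reduce this corollary directly to Proposition~\ref{cont} once we have checked the two hypotheses for~$v$: that~$v$ is continuous on~$\overline\Omega$ and that~$\mathcal{N}_s v \equiv 0$ on~$\R^n \setminus \overline\Omega$. Since the first property is built into the definition (because~$v = v_0$ on~$\overline\Omega$ and~$v_0 \in C(\R^n)$), the substantive content is the second property together with a clean invocation of the continuity result.

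First I would verify the Neumann condition by direct substitution. Fix~$x \in \R^n \setminus \overline\Omega$. For every~$y \in \Omega$ one has~$v(y) = v_0(y)$ by the first branch of the definition of~$v$, so
\[
\mathcal{N}_s v(x) = c_{n,s} \int_\Omega \frac{v(x)-v(y)}{|x-y|^{n+2s}}\,dy
= c_{n,s}\left( v(x)\int_\Omega \frac{dy}{|x-y|^{n+2s}} - \int_\Omega \frac{v_0(y)}{|x-y|^{n+2s}}\,dy\right),
\]
and the second branch of the definition of~$v(x)$ is precisely the value that makes the expression in parentheses equal zero. Note that the denominator~$\int_\Omega |x-y|^{-n-2s}\,dy$ is finite and strictly positive for each such~$x$ because~$\Omega$ is bounded and~$x \notin \overline\Omega$, so the definition is unambiguous.

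Next I would observe that~$v$ is continuous on~$\overline\Omega$ since it agrees there with~$v_0 \in C(\R^n)$, and also~$v$ is bounded on~$\overline\Omega$ (because~$v_0$ is continuous on the compact closure; if~$\Omega$ is unbounded one argues locally using that only values of~$v = v_0$ on~$\Omega$ enter the Neumann formula). With these two facts in hand, Proposition~\ref{cont} applies verbatim to~$v$ and yields continuity on all of~$\R^n$.

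The only point that requires a second look is whether Proposition~\ref{cont} genuinely covers our~$v$: that proposition assumes continuity on~$\overline\Omega$ and~$\mathcal{N}_s v = 0$ on~$\R^n \setminus \overline\Omega$, and concludes continuity on~$\R^n$. Both hypotheses are now verified, so the conclusion~$v \in C(\R^n)$ is immediate, and the identities~$v = v_0$ on~$\overline\Omega$ and~$\mathcal{N}_s v = 0$ on~$\R^n \setminus \overline\Omega$ hold by construction. I do not expect any real obstacle here; the corollary is essentially a packaging of Proposition~\ref{cont}, the only care being to confirm that the explicit outer formula is well defined and indeed solves~$\mathcal{N}_s v = 0$ pointwise.
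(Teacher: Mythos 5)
Your proposal is correct and follows essentially the same route as the paper: the paper's proof of Corollary~\ref{C con} is simply to observe that $v=v_0$ on $\overline\Omega$ and $\mathcal{N}_s v=0$ on $\R^n\setminus\overline\Omega$ hold by construction, and then to invoke Proposition~\ref{cont}. Your write-up just fills in the algebraic verification that the explicit outer formula annihilates $\mathcal{N}_s v$, which the paper leaves as an immediate consequence of the definitions.
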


\begin{proof} By construction, $v=v_0$ in~$\overline\Omega$
and~${\mathcal{N}}_s v=0$ in~$\R^n\setminus\overline\Omega$.
Then we can use Proposition~\ref{cont} and obtain that~$v\in C(\R^n)$.
\end{proof}

Now we study the boundary behavior of the nonlocal
Neumann function~$ \tilde{\mathcal N}_s u$.

\begin{prop}\label{89Y}
Let $\Omega\subset\R^n$ be a $C^1$ domain, and $u\in C(\R^n)$.
Then, for all $s\in(0,1)$,
\begin{equation}\label{R51}
\lim_{{x\rightarrow\partial\Omega}\atop{x\in\R^n\setminus\overline\Omega}}\tilde{\mathcal N}_s u(x)=0.\end{equation}
Also, if $s>\frac12$ and~$u\in C^{1,\alpha}(\R^n)$ for some~$\alpha\in(0,\,2s-1)$, then
\begin{equation}\label{R52}\partial_\nu\tilde{\mathcal N}_s u(x):=
\lim_{\epsilon\to0^+} \frac{\tilde{\mathcal N}_s u(x+\epsilon\nu)
}{\epsilon}=\kappa \, \partial_\nu u\qquad \textrm{for any}\ x\in\partial\Omega,
\end{equation}
for some constant $\kappa>0$.
\end{prop}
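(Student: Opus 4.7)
The plan is to rewrite
\[\tilde{\mathcal N}_s u(x)=u(x)-\bar u(x),\qquad
\bar u(x):=\frac{\displaystyle\int_\Omega\frac{u(y)}{|x-y|^{n+2s}}\,dy}{\displaystyle\int_\Omega\frac{dy}{|x-y|^{n+2s}}},\]
so that $\bar u(x)$ is the weighted average of $u|_\Omega$ against the probability density proportional to $|x-y|^{-n-2s}$. Under this decomposition, \eqref{R51} reduces to showing that both $u(x)\to u(x_0)$ and $\bar u(x)\to u(x_0)$ as $x\to x_0\in\partial\Omega$ from outside, while \eqref{R52} reduces to expanding each of these two terms to first order in~$\epsilon$.

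For \eqref{R51}, I would reuse the argument of Proposition~\ref{cont} almost verbatim. Given $x_k\to x_0\in\partial\Omega$ with $x_k\in\R^n\setminus\overline\Omega$, let $q_k$ be the projection of $x_k$ on $\overline\Omega$; then $|q_k-x_0|\le 2|x_k-x_0|\to0$, and by continuity of~$u$ we get $u(x_k)\to u(x_0)$ and $u(q_k)\to u(x_0)$. The splitting $\bar u(x_k)-u(q_k)=I_1+I_2$ at scale $\sqrt{h_k}$ with $h_k:=|x_k-q_k|$, exactly as in the proof of Proposition~\ref{cont} (where the Neumann condition is only used to identify $u(p_k)$ with $\bar u(p_k)$, the bound $I_1+I_2\to0$ being purely quantitative and depending only on the $C^1$ flatness of $\partial\Omega$ at $q_k$), yields $\bar u(x_k)-u(q_k)\to0$, hence $\tilde{\mathcal N}_su(x_k)\to0$.

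For \eqref{R52}, I would Taylor-expand $u(y)=u(x_0)+\nabla u(x_0)\cdot(y-x_0)+R(y)$ with $|R(y)|\le C|y-x_0|^{1+\alpha}$, set $x_\epsilon:=x_0+\epsilon\nu$, and blow up via $y=x_0+\epsilon\eta$. Writing $\Omega_\epsilon:=\epsilon^{-1}(\Omega-x_0)$, the $C^1$ assumption on $\partial\Omega$ ensures $\chi_{\Omega_\epsilon}\to\chi_\Pi$ a.e.\ on every bounded set, where $\Pi:=\{\eta\cdot\nu<0\}$. The constant term cancels in $\bar u(x_\epsilon)-u(x_0)$, the $R$-contribution is $O(\epsilon^{1+\alpha})=o(\epsilon)$ after division by the denominator (this is where the hypothesis $\alpha<2s-1$ enters, to make $|\eta|^{1+\alpha}|\nu-\eta|^{-n-2s}$ integrable on $\Pi$), and the linear term yields
\[\bar u(x_\epsilon)-u(x_0)
=\epsilon\cdot\frac{\displaystyle\int_{\Omega_\epsilon}\frac{\nabla u(x_0)\cdot\eta}{|\nu-\eta|^{n+2s}}\,d\eta}{\displaystyle\int_{\Omega_\epsilon}\frac{d\eta}{|\nu-\eta|^{n+2s}}}+o(\epsilon).\]
By rotational symmetry of $\Pi$ about the axis~$\nu$, the vector $\int_\Pi \eta\,|\nu-\eta|^{-n-2s}\,d\eta$ is parallel to~$\nu$, and its $\nu$-component is strictly negative since $\eta\cdot\nu<0$ on~$\Pi$. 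Hence the ratio above converges to $-c_*\,\partial_\nu u(x_0)$ for an explicit constant $c_*>0$ depending only on $n$ and $s$. Combining with $u(x_\epsilon)-u(x_0)=\epsilon\,\partial_\nu u(x_0)+o(\epsilon)$, we obtain $\tilde{\mathcal N}_s u(x_\epsilon)/\epsilon\to(1+c_*)\partial_\nu u(x_0)$, so \eqref{R52} holds with $\kappa:=1+c_*>0$.

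The main obstacle is the rigorous passage to the limit under the blow-up, namely that both integrals over $\Omega_\epsilon$ converge to the corresponding ones over~$\Pi$. This calls for dominated convergence with integrable majorants on $|\nu-\eta|^{-n-2s}$ and $|\eta|\,|\nu-\eta|^{-n-2s}$, which work out because the singularity $\eta=\nu$ lies outside $\Pi$ and stays at positive distance from $\Omega_\epsilon$ for small $\epsilon$ (again by $C^1$ flatness), while the tails decay like $|\eta|^{-n-2s}$ and $|\eta|^{1-n-2s}$ respectively, both absolutely integrable at infinity for the ranges of $s$ considered.
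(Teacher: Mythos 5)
Your argument is correct and essentially the same as the paper's: for \eqref{R51} the paper invokes Corollary~\ref{C con} to produce a continuous extension $v$ with $\tilde{\mathcal N}_s u(x_k)=u(x_k)-v(x_k)$, which is precisely your decomposition $u-\bar u$ (you simply re-derive, rather than cite, the boundary continuity of the averaged function $\bar u$); for \eqref{R52} the paper likewise blows up at scale $\epsilon$, uses $\epsilon^{-1}\Omega\to\Pi$, and kills the tangential directions by odd symmetry. The only cosmetic difference is that the paper Taylor-expands $u(\epsilon e_n)-u(\epsilon\eta)$ around the exterior point $\epsilon e_n$ rather than around $x_0$, yielding $\kappa$ directly as the single quotient $\kappa=\bigl(\int_\Pi(1-\eta_n)|e_n-\eta|^{-n-2s}\,d\eta\bigr)\big/\bigl(\int_\Pi|e_n-\eta|^{-n-2s}\,d\eta\bigr)$, which coincides with your $1+c_*$ upon writing $1-\eta_n=1+(-\eta_n)$.
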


\begin{proof}
Let~$x_k$ be a sequence in~$\R^n\setminus\overline\Omega$
such that~$x_k\to x_\infty\in\partial\Omega$ as~$k\to+\infty$.

By Corollary~\ref{C con} (applied here with~$v_0:=u$),
there exists~$v\in C(\R^n)$ such that~$v=u$
in~$\overline\Omega$
and~${\mathcal{N}}_s v=0$ in~$\R^n\setminus\overline\Omega$.
By the continuity of~$u$ and~$v$ we have that
\begin{equation}\label{89.89}
\lim_{k\to+\infty} u(x_k)-v(x_k)=u(x_\infty)-v(x_\infty)=0.\end{equation}
Moreover
\begin{eqnarray*}
\tilde{\mathcal{N}}_s u(x_k) &=&
\tilde{\mathcal{N}}_s u(x_k) - \tilde{\mathcal{N}}_s v(x_k) \\
&=& \frac{
\displaystyle\int_\Omega \displaystyle\frac{ u(x_k)-u(y) }{ |x_k-y|^{n+2s} }\,dy
- \displaystyle\int_\Omega \displaystyle\frac{v(x_k)-v(y)}{|x_k-y|^{n+2s}}\,dy}{
\displaystyle\int_\Omega \displaystyle\frac{dy}{|x_k-y|^{n+2s}}
} \\
&=&  \frac{ \displaystyle\int_\Omega \displaystyle\frac{u(x_k)-v(x_k)}{|x_k-y|^{n+2s}}\,dy}{
\displaystyle\int_\Omega \displaystyle\frac{dy}{|x_k-y|^{n+2s}} }\\
&=& u(x_k)-v(x_k).
\end{eqnarray*}
This and~\eqref{89.89} imply that
$$ \lim_{k\to+\infty}\tilde{\mathcal{N}}_s u(x_k)=0,$$
that is \eqref{R51}.

Now, we prove~\eqref{R52}.
For this, we suppose that~$s>\frac12$, that~$0\in\partial\Omega$
and that the exterior normal~$\nu$ coincides with~$e_n=(0,\cdots,0,1)$;
then we use~\eqref{R51} and the change
of variable~$\eta:=\epsilon^{-1} y$ in the following computation:
\begin{eqnarray*}
\epsilon^{-1} \Big(\tilde{\mathcal{N}}_s u(\epsilon e_n) -
\tilde{\mathcal{N}}_s u(0)\Big) &=&
\epsilon^{-1} \tilde{\mathcal{N}}_s u(\epsilon e_n) \\&=&
\frac{ \epsilon^{-1}
\displaystyle\int_{\Omega} \displaystyle\frac{u(\epsilon e_n)-u(y)}{
|\epsilon e_n-y|^{n+2s}}\,dy}{
\displaystyle\int_{\Omega} \displaystyle\frac{dy}{|\epsilon e_n-y|^{n+2s}}
}\\ &=&
\frac{ \epsilon^{-1}
\displaystyle\int_{\frac1\epsilon \Omega} \displaystyle\frac{u(\epsilon e_n)-
u(\epsilon\eta)}{
|e_n-\eta|^{n+2s}}\,d\eta
}{
\displaystyle\int_{\frac1\epsilon \Omega} \displaystyle\frac{d\eta}{|e_n-\eta|^{n+2s}}
}=
I_1+I_2,
\end{eqnarray*}
where
\begin{eqnarray*}
&& I_1:=
\frac{
\displaystyle\int_{\frac1\epsilon \Omega} \displaystyle\frac{\nabla u(\epsilon e_n)\cdot
(e_n-\eta) }{
|e_n-\eta|^{n+2s}}\,d\eta
}{
\displaystyle\int_{\frac1\epsilon \Omega} \displaystyle\frac{d\eta}{|e_n-\eta|^{n+2s}} }
\\ {\mbox{and }}&& I_2:=
\frac{ \epsilon^{-1}
\displaystyle\int_{\frac1\epsilon \Omega} \displaystyle\frac{u(\epsilon e_n)-
u(\epsilon\eta)-\epsilon\nabla u(\epsilon e_n)\cdot(e_n-\eta)}{
|e_n-\eta|^{n+2s}}\,d\eta
}{
\displaystyle\int_{\frac1\epsilon \Omega} \displaystyle\frac{d\eta}{|e_n-\eta|^{n+2s}}
}.\end{eqnarray*}
So, if~$\Pi:=\{ x_n<0\}$,
we have that
\begin{eqnarray*}
\lim_{\epsilon\to0^+} I_1 &=&
\frac{
\displaystyle\int_{\Pi} \displaystyle\frac{\nabla u(0)\cdot
(e_n-\eta) }{
|e_n-\eta|^{n+2s}}\,d\eta
}{
\displaystyle\int_{\Pi} \displaystyle\frac{d\eta}{|e_n-\eta|^{n+2s}} }
\\ &=&
\frac{
\displaystyle\int_{\Pi} \displaystyle\frac{\partial_n u(0)
(1-\eta_n) }{
|e_n-\eta|^{n+2s}}\,d\eta
}{
\displaystyle\int_{\Pi} \displaystyle\frac{d\eta}{|e_n-\eta|^{n+2s}} },
\end{eqnarray*}
where we have used that, for any~$i\in\{1,\cdots,n-1\}$
the map~$\eta\mapsto \frac{\partial_i u(0)\cdot
\eta_i }{
|e_n-\eta|^{n+2s}}$ is odd and so its integral averages to zero.
So, we can write
\begin{equation}\label{768}
\lim_{\epsilon\to0^+} I_1 =\kappa\, \partial_n u(0) \ {\mbox{ with }} \
\kappa:=
\frac{
\displaystyle\int_{\Pi} \displaystyle\frac{
(1-\eta_n) }{
|e_n-\eta|^{n+2s}}\,d\eta
}{
\displaystyle\int_{\Pi} \displaystyle\frac{d\eta}{|e_n-\eta|^{n+2s}} }.
\end{equation}
We remark that~$\kappa$ is finite, since~$s>\frac12$.
Moreover
\begin{eqnarray*}
&& \epsilon^{-1}\,\Big|u(\epsilon e_n)-
u(\epsilon\eta)-\epsilon\nabla u(\epsilon e_n)\cdot(e_n-\eta)\Big|\\
&=& \left| \int_0^1 \Big(\nabla u(t\epsilon e_n +(1-t)\epsilon\eta)
-\nabla u(\epsilon e_n)\Big)\cdot(e_n-\eta)\,dt
\right| \\&\le& \|u\|_{C^{1,\alpha(\R^n)}} \,|e_n-\eta|\,
\int_0^1 |t\epsilon e_n +(1-t)\epsilon\eta-\epsilon e_n|^\alpha\,dt
\\&\le& \|u\|_{C^{1,\alpha(\R^n)}} \epsilon^\alpha\,|e_n-\eta|^{1+\alpha}
.\end{eqnarray*}
As a consequence
$$ \epsilon^{-\alpha} |I_2|\le
\frac{\|u\|_{C^{1,\alpha(\R^n)}}
\displaystyle\int_{\frac1\epsilon \Omega} \displaystyle\frac{d\eta}{
|e_n-\eta|^{n+2s-1-\alpha}}
}{
\displaystyle\int_{\frac1\epsilon \Omega}
\displaystyle\frac{d\eta}{|e_n-\eta|^{n+2s}} }
\ \longrightarrow \
\frac{ \|u\|_{C^{1,\alpha(\R^n)}}
\displaystyle\int_{\Pi} \displaystyle\frac{d\eta}{
|e_n-\eta|^{n+2s-1-\alpha}}
}{
\displaystyle\int_{\Pi} \displaystyle\frac{d\eta}{|e_n-\eta|^{n+2s}} }$$
as~$\epsilon\to 0$,
which is finite, thanks to our assumptions on~$\alpha$. This shows that~$
I_2\to0$ as~$\epsilon\to 0$. Hence, recalling~\eqref{768}, we get that
$$\lim_{\epsilon\to0^+} \epsilon^{-1} \Big(
\tilde{\mathcal{N}}_s u(\epsilon e_n) -
\tilde{\mathcal{N}}_s u(0)\Big)=\kappa\, \partial_n u(0),$$
which establishes~\eqref{R52}.
\end{proof}

\section{An overdetermined problem}\label{sover}

In this section we consider an overdetemined problem.
For this, we will use the renormalized nonlocal Neumann condition
that has been introduced in Remark~\ref{rem1}.
Indeed, as we pointed out in Remark~\ref{rem2}, this is
natural if one considers nonhomogeneous Neumann conditions.

\begin{thm}
Let~$\Omega\subset\R^n$ be a bounded and Lipschitz domain.
Then there exists no function~$u\in C(\R^n)$ satisfying
\begin{equation}\label{over}
\left\{ \begin{array}{rcll}
u(x) &=&0 &\textrm{for any }x\in\R^n\setminus\Omega \\
\tilde{\mathcal N}_su(x)&=&1 &\textrm{for any }x\in\R^n\setminus\overline{\Omega}.
\end{array}\right.
\end{equation}
\end{thm}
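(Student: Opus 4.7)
The strategy is proof by contradiction combined with a boundary limit computation, in the spirit of Proposition~\ref{89Y}. Suppose for contradiction that some $u\in C(\R^n)$ satisfies~\eqref{over}. The continuity of $u$ together with $u\equiv 0$ in $\R^n\setminus\Omega$ forces $u(x_0)=0$ for every $x_0\in\partial\Omega$. Moreover, since $u(x)=0$ for $x\in\R^n\setminus\overline\Omega$, the condition $\tilde{\mathcal{N}}_s u(x)=1$ rewrites, recalling~\eqref{renormalized}, as
\[
-\frac{\displaystyle\int_\Omega \frac{u(y)}{|x-y|^{n+2s}}\,dy}{\displaystyle\int_\Omega \frac{dy}{|x-y|^{n+2s}}}=1\qquad\textrm{for every }x\in\R^n\setminus\overline\Omega.
\]

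I would then fix $x_0\in\partial\Omega$ and show that the left-hand side above tends to $0$ as $x\to x_0$ from outside $\overline\Omega$, which is the desired contradiction. Given $\varepsilon>0$, continuity of $u$ at $x_0$ together with $u(x_0)=0$ yields $\delta>0$ such that $|u(y)|<\varepsilon$ for $|y-x_0|<\delta$. Splitting the numerator integral across $\Omega\cap B_\delta(x_0)$ and $\Omega\setminus B_\delta(x_0)$, and dividing through, one gets
\[
\bigl|\tilde{\mathcal{N}}_s u(x)\bigr|\le \varepsilon + \|u\|_{L^\infty(\R^n)}\cdot\frac{\displaystyle\int_{\Omega\setminus B_\delta(x_0)}\frac{dy}{|x-y|^{n+2s}}}{\displaystyle\int_\Omega \frac{dy}{|x-y|^{n+2s}}}.
\]

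The main technical step — and in my view the only non-trivial one — is to show that this last ratio tends to $0$ as $x\to x_0$. Its numerator is uniformly bounded for $x$ close to $x_0$ (since the integrand is kept away from its singularity), so it suffices to prove that
\[
\int_\Omega \frac{dy}{|x-y|^{n+2s}}\longrightarrow +\infty\qquad\textrm{as }x\to x_0.
\]
This is where Lipschitz regularity enters: the uniform interior cone condition provides a truncated cone $K\subset\Omega$ with vertex $x_0$ and fixed opening, and on the subset $\{y\in K:\,|y-x_0|>2|x-x_0|\}$ the triangle inequality gives $|x-y|\le\tfrac{3}{2}|y-x_0|$. A direct polar-coordinate computation then yields a lower bound of order $|x-x_0|^{-2s}$, which diverges.

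Combining these ingredients gives $\limsup_{x\to x_0}|\tilde{\mathcal{N}}_s u(x)|\le\varepsilon$ for every $\varepsilon>0$, hence $\lim_{x\to x_0,\,x\in\R^n\setminus\overline\Omega}\tilde{\mathcal{N}}_s u(x)=0$, contradicting $\tilde{\mathcal{N}}_s u\equiv 1$ in $\R^n\setminus\overline\Omega$. This completes the argument.
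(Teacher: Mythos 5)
Your argument is correct and follows essentially the same route as the paper's proof: both hinge on the observation that the weight $\int_\Omega |x-y|^{-n-2s}\,dy$ blows up like $\mathrm{dist}(x,\partial\Omega)^{-2s}$ near the boundary, thanks to the interior cone furnished by the Lipschitz hypothesis, while the numerator of $\tilde{\mathcal{N}}_s u(x)$ remains comparatively small because $u$ is continuous and vanishes on $\partial\Omega$, so that $\tilde{\mathcal{N}}_s u(x)\to 0$ as $x\to\partial\Omega$ from outside, contradicting $\tilde{\mathcal{N}}_s u\equiv 1$. The only cosmetic difference is that the paper works at a single boundary point (taken to be the origin) along the normal direction, whereas you establish the boundary limit at an arbitrary $x_0\in\partial\Omega$; the underlying cone estimate is the same.
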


\begin{rem}
We notice that~$u=\chi_{\Omega}$ satisfies~\eqref{over},
but it is a discontinuous function.
\end{rem}

\begin{proof}
Without loss of generality, we can suppose that~$0\in\partial\Omega$.
We argue by contradiction and we assume that there exists a continuous function~$u$
that satisfies~\eqref{over}. Therefore, there exists~$\delta>0$ such that
\begin{equation}\label{contin}
{\mbox{$|u|\le 1/2$ in~$B_\delta$.}}
\end{equation}

Since~$\Omega$ is Lipschitz, up to choosing~$\delta$ small enough,
we have that~$\Omega\cap B_\delta=\tilde{\Omega}\cap B_\delta$, where
$$ \tilde{\Omega}:=\{ x=(x',x_n)\in\R^{n-1}\times\R {\mbox{ s.t. }} x_n<\gamma(x')\}$$
for a suitable Lipschitz function~$\gamma:\R^{n-1}\rightarrow\R$
such that~$\gamma(0)=0$ and~$\partial_{x'}\gamma(0)=0$.

Now we let~$x:=\epsilon\, e_n\in\R^n\setminus\overline{\Omega}$, for suitable~$\epsilon>0$
sufficiently small. We observe that
\begin{equation}\label{zero}
u(\epsilon\, e_n)=0.
\end{equation}
Moreover we consider the set
$$ \frac{1}{\epsilon}\tilde{\Omega} =
\left\{ y=(y',y_n)\in\R^{n-1}\times\R {\mbox{ s.t. }} y_n<\frac{1}{\epsilon}\gamma(\epsilon y')\right\}.
$$
We also define
$$ K:= \left\{ y=(y',y_n)\in\R^{n-1}\times\R {\mbox{ s.t. }} y_n<-L\,|y'|\right\},
$$
where~$L$ is the Lipschitz constant of~$\gamma$.

We claim that
\begin{equation}\label{kappa}
K\subseteq\epsilon^{-1}\,\tilde{\Omega}.
\end{equation}
Indeed, since~$\gamma$ is Lipschitz and~$0\in\partial\Omega$, we have that
$$ -\gamma(\epsilon y')=-\gamma(\epsilon y')+\gamma(0)\le L\, \epsilon\, |y'|, $$
and so, if~$y\in K$,
$$ y_n\le -L\,|y'|\le \frac{1}{\epsilon}\gamma(\epsilon y'),$$
which implies that~$y\in\epsilon^{-1}\tilde{\Omega}$. This shows~\eqref{kappa}.

Now we define
$$ \Sigma_{\epsilon}:=\int_{B_\delta\cap\Omega}\frac{dy}{|\epsilon e_n-y|^{n+2s}}, $$
and we observe that
\begin{equation}\label{conto1}
\int_{B_\delta\cap\Omega}\frac{u(y)-u(\epsilon\, e_n)}{|\epsilon e_n-y|^{n+2s}}\, dy\le
\frac{1}{2}\, \Sigma_\epsilon, \end{equation}
thanks to~\eqref{zero} and~\eqref{contin}.
Furthermore, if~$y\in\R^n\setminus B_\delta$ and~$\epsilon\le\delta/2$, we have
$$ |y-\epsilon e_n|\ge |y|-\epsilon\ge \frac{|y|}{2}, $$
which implies that
\begin{equation}\label{conto2}
\int_{\Omega\setminus B_\delta}\frac{u(y)-u(\epsilon\, e_n)}{|\epsilon e_n-y|^{n+2s}}\, dy \le C\,
\int_{\Omega\setminus B_\delta}\frac{dy}{|\epsilon e_n-y|^{n+2s}}\le
C\,\int_{\R^n\setminus B_\delta}\frac{dy}{|y|^{n+2s}}\, dy = C\, \delta^{-2s},
\end{equation}
up to renaming the constants.

On the othe hand, we have that
\begin{equation}\label{conto3}
\int_{\Omega}\frac{dy}{|\epsilon e_n-y|^{n+2s}}\ge
\int_{B_\delta\cap\Omega}\frac{dy}{|\epsilon e_n-y|^{n+2s}}=\Sigma_\epsilon.
\end{equation}

Finally, we observe that
\begin{equation}\begin{split}\label{conto5}
\epsilon^{2s}\,\Sigma_\epsilon =&
\epsilon^{2s}\, \int_{B_\delta\cap\Omega}\frac{dy}{|\epsilon e_n-y|^{n+2s}} \\
=&
\int_{B_{\delta/\epsilon}\cap(\epsilon^{-1}\Omega)}\frac{dz}{|e_n-z|^{n+2s}}\\
\ge & \int_{B_{\delta/\epsilon}\cap K}\frac{dz}{|e_n-z|^{n+2s}}\\
=: & \kappa,
\end{split}\end{equation}
where we have used the change of variable~$y=\epsilon z$ and~\eqref{kappa}.

Hence, using the second condition in~\eqref{over}
and putting together~\eqref{conto1}, \eqref{conto2}, \eqref{conto3}
and~\eqref{conto5}, we obtain
\begin{eqnarray*}
0 &=& \int_{\Omega}\frac{dy}{|\epsilon e_n-y|^{n+2s}}-
\int_{\Omega}\frac{u(\epsilon e_n)-u(x)}{|\epsilon e_n-y|^{n+2s}}\, dy \\
&=& \int_{\Omega}\frac{dy}{|\epsilon e_n-y|^{n+2s}}-
\int_{\Omega\cap B_\delta}\frac{u(\epsilon e_n)-u(x)}{|\epsilon e_n-y|^{n+2s}}\, dy
- \int_{\Omega\setminus B_\delta}\frac{u(\epsilon e_n)-u(x)}{|\epsilon e_n-y|^{n+2s}}\, dy \\
& \ge & \Sigma_\epsilon  -\frac{1}{2}\Sigma_\epsilon -C\, \delta^{-2s}\\
& =& \frac{1}{2}\Sigma_\epsilon -C\,\delta^{-2s}\\
&=& \epsilon^{-2s}\left(
\frac{\epsilon^{2s}}{2}\Sigma_\epsilon- C\, \epsilon^{2s}\, \delta^{-2s}\right)\\
&\ge & \epsilon^{-2s}\left(
\frac{\kappa}{2}- C\, \epsilon^{2s}\, \delta^{-2s}\right)>0
\end{eqnarray*}
if~$\epsilon$ is sufficiently small.
This gives a contradiction and concludes the proof.
\end{proof}

\section{Comparison with previous works}\label{S1}

In this last section we compare our new Neumann nonlocal conditions with the previous works in the literature that also deal with Neumann-type conditions for the fractional Laplacian $(-\Delta)^s$ (or related operators).

The idea of \cite{BBC,CK} (and also \cite{CERW,CERW2,CERW3}) is to consider the \emph{regional} fractional Laplacian, associated to the Dirichlet form
\begin{equation}\label{regional}
c_{n,s}\int_\Omega\int_\Omega\frac{\bigl(u(x)-u(y)\bigr)\bigl(v(x)-v(y)\bigr)}{|x-y|^{n+2s}}\,dx\,dy.
\end{equation}
This operator corresponds to a censored process, i.e., a process whose
jumps are restricted to be in $\Omega$.
The operator can be defined in general domains $\Omega$,
and seems to give a natural analogue of homogeneous Neumann condition.
However, no nonhomogeneous Neumann conditions can be considered with this model, and the operator depends on the domain $\Omega$.

On the other hand, in \cite{BCGJ,BGJ} the usual diffusion associated to the fractional Laplacian \eqref{operator} was considered inside $\Omega$, and thus the ``particle'' can jump outside $\Omega$.
When it jumps outside $\Omega$, then it is ``reflected'' or ``projected'' inside $\Omega$ in a deterministic way.
Of course, different types of reflections or projections lead to different Neumann conditions.
To appropriately define these reflections, some assumptions on the domain $\Omega$ (like smoothness or convexity) need to be done.
In contrast with the regional fractional Laplacian, this problem does not have a variational formulation and everything is done in the context of viscosity solutions.

In \cite{Grubb2} a different Neumann problem for the fractional Laplacian was considered.
Solutions to this type of Neumann problems are ``large solutions'', in the sense that they are not bounded in a neighborhood of $\partial\Omega$.
More precisely, it is proved in \cite{Grubb2} that the following problem is well-posed
\[\left\{ \begin{array}{rcll}
(-\Delta)^su&=&f&\textrm{in }\Omega \\
u&=&0&\textrm{in }\R^n\setminus\Omega\\
\partial_\nu\bigl(u/d^{s-1}\bigr)&=&g&\textrm{on }\partial\Omega,
\end{array}\right.\]
where $d(x)$ is the distance to $\partial\Omega$.

Finally, in \cite{spectral,spectral2} homogeneous Neumann problems for the \emph{spectral} fractional Laplacian were studied.
The operator in this case is defined via the eigenfunctions of the Laplacian $-\Delta$ in $\Omega$ with Neumann boundary condition $\partial_\nu u=0$ on $\partial\Omega$.

With respect to the existing literature,
the new Neumann problems \eqref{eq-intro} and \eqref{eq-heat-intro} that we present here have the following advantages:
\begin{itemize}
\item The equation satisfied inside $\Omega$ does not depend on anything
(domain, right hand side, etc). Notice that the operator in~\eqref{operator}
does not depend on the domain~$\Omega$, while for instance the
regional fractional Laplacian defined in~\eqref{regional} depends on~$\Omega$.
\item The problem can be formulated in general domains, including nonsmooth or even unbounded ones.
\item The problem has a variational structure. For instance,
solutions to the elliptic problem \eqref{eq-intro} can be found as critical points of the functional
\[\mathcal E(u)=\frac{c_{n,s}}{4}\int_{\R^{2n}\setminus(\mathcal C\Omega)^2}\frac{|u(x)-u(y)|^2}{|x-y|^{n+2s}}\,dx\,dy-\int_\Omega fu.\]
We notice that the variational formulation of the problem is the
analogue of the case $s=1$.
Also, this allows us to easily prove existence of solutions (whenever the compatibility condition $\int_\Omega f=0$ is satisfied).
\item Solutions to the fractional heat equation \eqref{eq-heat-intro} possess natural properties like conservation of mass inside $\Omega$ or convergence to a constant as $t\rightarrow+\infty$.
\item Our probabilistic interpretation allows us to formulate problems with nonhomogeneous Neumann conditions $\mathcal N_s u=g$ in $\R^n\setminus\overline{\Omega}$, or with mixed Dirichlet and Neumann conditions.
\item The formulation of nonlinear equations like $(-\Delta)^su=f(u)$ in $\Omega$ with Neumann conditions is also clear.
\end{itemize}

\appendix

\section*{Proof of Theorems~\ref{EX} and~\ref{EVAL}
with a functional analytic notation}

As anticipated in the footnote of page~\pageref{FT1},
we provide this appendix in order to satisfy the reader
who wish to prove Theorems~\ref{EX} and~\ref{EVAL}
by keeping the distinction between a function defined in the whole
of~$\R^n$ and its restriction to the domain~$\Omega$. For this scope,
we will use the notation of denoting~$r^+ u$ and~$r^- u$
the restriction of~$u$ to~$\Omega$ and~$\R^n\setminus\Omega$,
respectively. Notice that, in this notation, we have that~$u:\R^n\to\R$,
but $r^+ u:\Omega\to\R$ and~$r^- u:\R^n\setminus\Omega\to\R$.

\begin{proof}[Proof of Theorem~\ref{EX}] One can reduce to the case
$g\equiv0$. By the Riesz representation theorem,
given~$h\in L^2(\Omega)$, one finds~$v:=T_o h\in H^s_{\Omega,g}$
that is a weak solution of
$$r^+\big(  (-\Delta)^s v+v\big)=h,$$
with~$r^- \mathcal N_sv=0$.

Notice that~$T_o : L^2(\Omega)\to H^s_{\Omega,g}$.
We also define by~$T:L^2(\Omega)\to L^2(\Omega)$
the restriction operator of $T_o$, that is~$Th:= r^+ T_o h$.
One sees that~$T$ is compact and self-adjoint.
By construction~$r^- \mathcal N_s T_o h=0$ and
$$ h=r^+\big(  (-\Delta)^s T_o h+T_o h\big) =r^+(-\Delta)^s T_o h + Th,$$
that is
$$ r^- \mathcal N_s T_o=0
\ {\mbox{ and }} \ Id-T = r^+(-\Delta)^s T_o.$$
Therefore, by Lemma \ref{max-prin},
\begin{eqnarray*}
{Ker}(Id-T) &=& \{ h\in L^2(\Omega) {\mbox{ s.t. }} r^+(-\Delta)^s T_o h=0\}
\\ &=& \{ h\in L^2(\Omega) {\mbox{ s.t. }} r^+(-\Delta)^s T_o h=0
{\mbox{ and }} r^- \mathcal N_s T_o h=0\}
\\ &=& \{ h\in L^2(\Omega) {\mbox{ s.t. $T_o h$ is constant}} \}
\\ &=& \{ h\in L^2(\Omega) {\mbox{ s.t. $h$ is constant}} \}.
\end{eqnarray*}
{F}rom the Fredholm Alternative, we conclude that~$ {Im} (Id-T)$
is the space of functions in~$L^2(\Omega)$
that are orthogonal to constants.
\end{proof}

\begin{proof}[Proof of Theorem~\ref{EVAL}]
We define
$$ L^2_0(\Omega):=\left\{u\in L^2(\Omega) \ :\ \int_{\Omega}u =0\right\}.$$
By Theorem~\ref{EX}, for any~$f\in L^2_0(\Omega)$
one finds~$v:=T_o f\in H^s_{\Omega,g}$
that is a weak solution of~$r^+ (-\Delta)^s v=f$,
with~$r^-\mathcal N_s v=0$ and zero average in~$\Omega$.
We also define~$T$ to be the restriction of~$T_o$, that is~$Tf:=r^+ T_o f$.
The operator $T$ is compact and self-adjoint in~$L^2_0(\Omega)$.
Thus, by the spectral theorem there exists a sequence of eigenvalues
$\{\mu_i\}_{i\ge2}$ of~$T$,
and its corresponding eigenfunctions~$\{e_i\}_{i\ge2}$ are a
complete orthogonal system in~$L^2_0(\Omega)$.

Notice that~$r^-\mathcal N_s T_o e_i$, which gives, for every~$x\in\R^n\setminus\Omega$,
$$ T_o e_i(x)\,\int_\Omega \frac{dy}{|x-y|^{n+2s}}=
\int_\Omega \frac{r^+ T_oe_i(y)}{|x-y|^{n+2s}}\,dy
=\int_\Omega \frac{T e_i(y)}{|x-y|^{n+2s}}\,dy=
\mu_i \int_\Omega \frac{ e_i(y)}{|x-y|^{n+2s}}\,dy.$$
This gives that
$$ \mu_i\ne0.$$
Indeed, otherwise we would have that~$r^-T_o e_i=0$. Since also
$$ 0=\mu_i e_i = Te_i =r^+ T_o e_i,$$
we would get that~$T_o e_i=0$ and thus~$0=(-\Delta)^sT_o e_i=e_i$,
which is impossible.

As a consequence, we can define~$ \lambda_i := \mu_i^{-1}$,
and~$u_i:=T_o e_i$.

Then
$$ r^+ u_i=r^+ T_oe_i= T e_i=\mu_i e_i$$
thus~$\{r^+ u_i\}_{i\ge2}$ are a
complete orthogonal system in~$L^2_0(\Omega)$, since so are~$\{e_i\}_{i\ge2}$.

Furthermore, $r^+ (-\Delta)^s u_i = r^+ (-\Delta)^s T_o e_i=
e_i= \mu_i^{-1} r^+ u_i=r^+ \lambda_i u_i$.
\end{proof}

\end{document}